\DeclareFontFamily{U}{mathx}{}
\DeclareFontShape{U}{mathx}{m}{n}{ <-> mathx10 }{}
\DeclareSymbolFont{mathx}{U}{mathx}{m}{n}
\DeclareMathAccent{\widecheck}{0}{mathx}{"71}
\definecolor{mygray}{gray}{0.75} 
\definecolor{shadecolor}{rgb}{1,0.9,0.7}
\newtheorem{theorem}{Theorem}[section]
\newtheorem{lemma}[theorem]{Lemma}
\newtheorem{lemma-definition}[theorem]{Lemma-Definition}
\newtheorem{proposition}[theorem]{Proposition}
\newtheorem{corollary}[theorem]{Corollary}
\newtheorem{conjecture}[theorem]{Conjecture}
\theoremstyle{definition}
\newtheorem{definition}[theorem]{Definition}
\theoremstyle{remark}
\newtheorem{remark}[theorem]{Remark}
\numberwithin{equation}{section}
\numberwithin{figure}{section}
\newcommand{\FF} {\mathbb{F}}
\newcommand{\bC}{\mathbb{C}}
\newcommand{\bP}{\mathbb{P}}
\newcommand{\cL}{\mathcal{L}}
\newcommand{\cM}{\mathscr{M}}
\newcommand{\cO}{\mathcal{O}}
\newcommand{\barM}{\overline{\mathcal{M}}}
\newcommand {\red}[1]{{\color{red} #1}}
\newcommand {\Spec} {\operatorname{Spec}}
\newcommand{\q} {\textbf{\emph{q}}}
\newcommand {\vdim} {\text{vdim }}
\def\mydate{\ifcase\month \or January\or February\or March\or
April\or May\or June\or July\or August\or September\or October\or 
November\or December\fi \space\number\day,\space\number\year}
\let\oldcite\cite
\renewcommand{\cite}{\@ifnextchar[{\@newcite}{\oldcite}}
\def\@newcite[#1]#2{\oldcite{#2},\,#1}
\begin{document}

\title[Higher Genus Gromov-Witten Invariants on Smooth Log Calabi-Yau pairs]
{Higher Genus Gromov-Witten Invariants from Projective Bundles on Smooth Log Calabi-Yau pairs}

\author{Benjamin Zhou} 
\address{\tiny Benjamin Zhou, Yau Mathematical Sciences Center, Tsinghua University,
Beijing, China}
\email{byzhou@mail.tsinghua.edu.cn}

\begin{abstract}
    Let $(X,E)$ be a smooth log Calabi-Yau pair consisting of a smooth Fano surface $X$ and a smooth anti-canonical divisor $E$. We obtain certain higher genus local Gromov-Witten invariants from the projectivization of the canonical bundle $Z := \bP(K_X \oplus \cO_X)$, using the degeneration formula for stable log maps \cite{KLR}. We evaluate an invariant in the degeneration using the relationship between $q$-refined tropical curve counting and logarithmic Gromov-Witten theory with $\lambda_g$-insertion \cite{Bou}. As a corollary, we use flops to prove a blow up formula for higher genus invariants of $Z$. Additionally assuming $X$ is toric, we prove an all-genus correspondence between open invariants of an outer Aganagic-Vafa brane $L \subset K_X$ and closed invariants of $Z$ that generalizes a genus-0 open-closed equality of \cite{Cha} to all-genus, by using an argument in \cite{GRZZ}. 
\end{abstract}

\maketitle
\setcounter{tocdepth}{1}
\tableofcontents


\section{Introduction}

\label{sec:intro}

A log Calabi-Yau pair $(X,E)$ consists of a smooth complex projective surface with a possibly singular anti-canonical divisor $E \in |-K_X|$. When $E = E_1 + \ldots E_l$ for $l \geq 1$ is an anti-canonical nodal curve, the pairs $(X,E)$ are known as \textit{Looijenga pairs}, which are shown to have a rich enumerative theory \cite{BBvG}. A smooth log Calabi-Yau pair $(X, E)$ additionally requires $l = 1$; by adjunction, the anti-canonical divisor $E$ is a smooth elliptic curve. For smooth log Calabi-Yau pairs, the logarithmic Gromov-Witten theory of $X$ with multiple contact orders along $E$ is equated with tropical curve counting \cite{Gra}, as well as with open Gromov-Witten invariants of an outer Aganagic-Vafa brane in $K_X$ in all-genus \cite{GRZ} \cite{GRZZ}. 

Log Calabi-Yau pairs also provide 2-dimensional examples of Gross-Siebert mirror symmetry \cite{GHK} \cite{GS16}. Roughly, the Gross-Siebert program associates to $(X, E)$ a mirror dual $\check{X}$, which is a quasiprojective variety obtained by gluing together cluster torus charts via wall crossing transformations. The mirror dual $\check{X}$ is given by the spectrum of an algebra $A$ defined by \textit{theta functions} $\theta_p: \check{X} \rightarrow \mathbb{C}$ which count \textit{broken lines}, or the tropical analogue of holomorphic discs, in $\check{X}$ \cite{GS16} \cite{GHK}. Let $P \subset H_2(X, \mathbb{Z})$ be the monoid of effective curve classes. For smooth log Calabi-Yau pairs $(X, E)$, $A$ is an $\mathbb{N}$-graded algebra indexed by theta functions $\theta_p$ for $p \in \mathbb{N}$,

\[
A = \bigoplus_{p \in \mathbb{N}} \mathbb{C}[P] \cdot \theta_p
\]
The multiplication rule $\theta_p \cdot \theta_q = \sum_r N^{\beta}_{pqr} \theta_r$, is given by the structure constants $N^{\beta}_{pqr}$ and determined by \textit{punctured Gromov-Witten invariants} in curve class $\beta$ of contact orders $p, q$ and $-r$ to $E$ \cite{GS16}. The invariants $N^{\beta}_{pqr}$ are in turn expressed by two-pointed, genus-0 logarithmic invariants,

\begin{equation}
\label{eq:punctured_log}
    N^{\beta}_{pqr} = (p-r) R_{0, (q, p-r)} + (q-r) R_{0, (p, q-r)}
\end{equation}
where $R_{0, (a, b)}$ denotes the two-pointed, genus-0 logarithmic invariant of $(X, E)$ with one fixed contact point of order $a$ and one varying contact point of order $b$. Equation \ref{eq:punctured_log} is proven in Theorem 1.1, \cite{Wan} via analyzing moduli spaces of punctured stable maps, as well as in Proposition 5.2, \cite{GRZ} using a tropical/holomorphic correspondence for logarithmic invariants \cite{Gra}. Hence, the genus-0, two-pointed log invariants of $(X, E)$ express the structure constants of the algebra $A$ of theta functions $\theta_p$.

By introducing a formal parameter $\q$, the \textit{quantum theta algebra} $A(\q)$ has a basis given by \textit{quantum theta functions} $\theta_p(\q)$ with multiplication rule in the quantum torus $xy = \q yx$. When $E$ is smooth, the quantum theta algebra $A(\q)$ is $\mathbb{N}$-graded and given by,

\begin{equation}
\label{eq:qtheta_alg}
    A(\q) = \bigoplus_{p \in \mathbb{N}} \mathbb{C}[P] \cdot \theta_p(\q)
\end{equation}
It is shown in Theorem 4.14, \cite{GRZ}, that the quantum theta functions $\theta_p(\q)$ are given by,

\[
\theta_p(\q) = y^q + \sum_{p\geq 1}\sum_{\beta | \beta\cdot E = p+q} R^{trop}_{g, (p,q)}(X, \beta) \hbar^{2g} Q^{\beta} t^{\deg \beta} y^{-p}
\]
where $R^{trop}_{g, (p,q)}(X, \beta)$ is the count of genus-$g$ tropical curves  in class $\beta$ with two unbounded legs intersecting the elliptic curve $E$ at two points of order $p$ and $q$. The variable $y$ is a cluster monomial on $\check{X}$ given by the unique unbounded direction in the dual intersection complex associated to a smooth log Calabi-Yau pair $(X, E)$. In Propositions 5.1, 5.2 of \cite{GRZ}, the multiplication rule for quantum theta functions $\theta_p(\q)$ is expressed by $R^{trop}_{g, (p,q)}(X, \beta)$, which in turn are related to higher genus two-pointed log invariants of $(X, E)$ with $\lambda_g$-insertion by \cite{Gra}. Setting $\q = 1$ recovers the genus-0 structure constants of Equation \ref{eq:punctured_log}. For more results on quantum theta functions, we refer to \cite{GRZ} \cite{GRZZ} \cite{Man2}.

\subsection{Main results}
\label{sec:main_results}
In this paper, we relate higher genus invariants of projective bundles on smooth log Calabi-Yau pairs $(X,E)$ to higher genus two-pointed log Gromov-Witten invariants of $(X, E)$ with $\lambda_g$-insertion. This provides a new way of expressing the structure constants of the quantum theta algebra $A(\q)$ (\ref{eq:qtheta_alg}). In addition, we show an alternative way to obtain certain higher genus closed and open Gromov-Witten invariants of Calabi-Yau 3-folds. 

Denote $X(\log E)$ to be the log scheme $X$ with divisorial log structure given by $E$. Define $Z:= \bP(K_X \oplus \cO_X)$ to be the projective compactification of the canonical bundle $K_X$. The effective curve classes of $Z$ decompose as $NE(Z) = i_*NE(X) \oplus NE(\bP^1)$, where $i: X \hookrightarrow Z$ is the inclusion of $X$ as the zero-section of $Z$, and $\bP^1$ is a fiber of $Z.$ Take $\beta+h \in NE(Z)$, where $\beta \in NE(X)$ and $h$ is a generator of $NE(\bP^1).$ Consider the moduli space $\barM_{g,1}(Z, \beta+h)$ of genus-$g$, 1-pointed maps to $Z$ in curve class $\beta+h$, which has virtual dimension,

\[
\vdim \barM_{g,1}(Z, \beta+h) = (\dim Z - 3)(1-g) + \int_{\beta+h} c_1(TZ) + 1
\]
As $c_1(TZ)(\beta) = 0$ and $\dim Z = 3$, we have $\text{vdim }\barM_{g,1}(Z, \beta+h)= 3$. This leads us to define the following closed Gromov-Witten invariant,

\begin{equation}
\label{eq:def_of_invariant}
 N_{g,1}(Z, \beta+h) := \int_{[\overline{\mathcal{M}}_{g,1}(Z, \beta+h)]^{vir}} ev^*[pt]  
\end{equation}
where $[pt] \in H^6(Z, \mathbb{Z})$ is the Poincar\'e dual of a point in $Z$. The quantity $N_{g,1}(Z, \beta+h)$ is a virtual count of genus-$g$ curves in $Z$ passing through a single point. 

\subsubsection{Higher genus correspondence for projective bundles}
Let $R_{g,(1,\beta\cdot E - 1)}(X(\log E), \beta)$ to be the genus-$g$, logarithmic Gromov-Witten invariant of $X$ in curve class $\beta$ counting curves intersecting $E$ at a prescribed point with contact order $1$ and a non-prescribed point with contact order $\beta\cdot E - 1$ (see \ref{eq:vertexA_invariant} for a definition). Let $\pi: \widehat{X} \rightarrow X$ be the blow up of $X$ at a point with exceptional curve $C$, and let $n_g(K_{\widehat{X}}, \pi^*\beta-C)$ be the genus-$g$, Gopakumar-Vafa invariant of $K_{\widehat{X}}$ in class $\pi^*\beta- C$ defined by multiple cover formulas \cite{GV1} \cite{GV2} for the corresponding local Gromov-Witten invariant $N_{g,0}(K_{\widehat{X}}, \pi^*\beta-C)$. In Section \ref{sec:proof-of-main}, we relate the generating function of $N_{g,1}(Z, \beta+h)$ to the $n_g(K_{\widehat{X}}, \pi^*\beta-C)$ for all $g$ and $\beta \in NE(X)$,

\begin{theorem}[= Theorem \ref{thm:main}]
\label{thm:main_intro}
There exists constants $c(g, \beta) \in \mathbb{Q}$ (described explicitly in Equation \ref{eq:proof-of-oP1,5}) such that,
\begin{multline*}
    \sum_{\substack{g \geq 0, \\ \beta \in NE(X)}}N_{g,1}(Z, \beta+h)\hbar^{2g}Q^{\beta} = \\ \sum_{\substack{g \geq 0, \\ \beta \in NE(X)}}\left[c(g, \beta)n_{g}\left(K_{\widehat{X}}, \pi^*\beta-C\right)\left(2\sin\frac{\hbar}{2}\right)^{2g - 2}Q^{\beta}\right] - \Delta
\end{multline*}
where the discrepancy $\Delta$ (Equation \ref{eq:delta_pl}) is expressed by the Gromov-Witten theory of $E$, and genus-$g$, 2-pointed logarithmic invariants $R_{g, (1,\beta\cdot E - 1)}(X(\log E), \beta)$ for all $g \geq 0$ and $\beta \in NE(X)$.
\end{theorem}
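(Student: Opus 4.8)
The plan is to evaluate $N_{g,1}(Z,\beta+h)$ by a degeneration of $Z$ adapted to $E$, apply the logarithmic degeneration formula of \cite{KLR}, evaluate the resulting pieces via the $q$-refined tropical correspondence of \cite{Bou} together with explicit Hodge-integral and elliptic computations, and repackage the outcome through the point blow-up $\pi\colon\widehat X\to X$ and a flop. The key structural observation is that blowing up a point $p\in E$ is \emph{log-crepant}: $-K_{\widehat X}=\widehat E:=\pi^*E-C$ with $\widehat E\cong E$ still a smooth anti-canonical elliptic curve, so $\cO_{\widehat X}(-\widehat E)=K_{\widehat X}$. Writing $Z=\bP(\cO_X(-E)\oplus\cO_X)$ and lifting a deformation to the normal cone along $E$ through such a point therefore produces a central fibre one of whose components is $\widehat Z:=\bP(K_{\widehat X}\oplus\cO_{\widehat X})$ and whose remaining part is a projective bundle over $\bP^2$; keeping the divisor $\pi_{\widehat Z}^{-1}(\widehat E)\subset\widehat Z$, an elliptic ruled surface over $E$, in the log structure records the residual tangency along $\widehat E$. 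Since $ev^*[pt]$ is the class of a generic point it may be specialised onto the central fibre, and primitivity of $h$ forces a single component of fibre class $h$.

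First, I would apply the degeneration formula of \cite{KLR}, writing $N_{g,1}(Z,\beta+h)$ as a sum over decomposition data: genus and curve-class splittings, contact orders along the singular surfaces, and the distribution of the marking. I expect the point insertion and the primitive fibre class to pin these down so that the surviving contribution on the $\widehat Z$ side is the Gromov-Witten invariant of $\bP(K_{\widehat X}\oplus\cO_{\widehat X})$ in the fibre-degree-zero class $\pi^*\beta-C$. This is the local Gromov-Witten invariant of $K_{\widehat X}$ in class $\pi^*\beta-C$, by the log-local mechanism: a curve of fibre degree zero and class $\pi^*\beta-C$ must lie in the zero section, since $K_{\widehat X}$ has negative degree on $\pi^*\beta-C$ whenever $E\cdot\beta>1$ (the finitely many remaining classes being treated by hand). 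The auxiliary contributions then give lower-order terms — point and line contributions from the $\bP^2$-bundle side, and relative contributions along $\pi_{\widehat Z}^{-1}(\widehat E)$ — which I expect to assemble into the Gromov-Witten theory of $E$ and into the two-pointed logarithmic invariants $R_{g,(1,\beta\cdot E-1)}(X(\log E),\beta)$ produced by the residual tangency of the $\widehat X$-part with $\widehat E$.

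Next comes the evaluation. The $R_{g,(1,\beta\cdot E-1)}(X(\log E),\beta)$ are precisely the "invariant evaluated in the degeneration": by \cite{Bou}, the series $\sum_g R_{g,(1,\beta\cdot E-1)}(X(\log E),\beta)\hbar^{2g}$ is the $q$-refined count of two-legged genus-$g$ tropical curves in class $\beta$ with tangency orders $(1,\beta\cdot E-1)$ along $E$ and $\q=e^{i\hbar}$ — equivalently a structure constant of the quantum theta algebra $A(\q)$ of (\ref{eq:qtheta_alg}). The local Gromov-Witten series of $K_{\widehat X}$ in class $\pi^*\beta-C$ is rewritten in Gopakumar-Vafa form: since $N_{C/K_{\widehat X}}=N_{C/\widehat X}\oplus K_{\widehat X}|_C=\cO_C(-1)\oplus\cO_C(-1)$, the curve $C$ is a $(-1,-1)$-curve, flop invariance of Gromov-Witten invariants controls the contributions of multiple covers of $C$, and the multiple-cover formula yields $\sum_g c(g,\beta)\,n_g(K_{\widehat X},\pi^*\beta-C)\left(2\sin\frac{\hbar}{2}\right)^{2g-2}$ with $c(g,\beta)$ the explicit combinatorial factors of Equation \ref{eq:proof-of-oP1,5}; the $\left(2\sin\frac{\hbar}{2}\right)^{2g-2}$ also matches the resummed Hodge integrals coming from the $\bP^2$-bundle component. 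Collecting the residual Gromov-Witten theory of $E$ and the residual two-pointed logarithmic terms into $\Delta$ (Equation \ref{eq:delta_pl}) then yields the asserted identity.

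The main obstacle is the second step: showing that the logarithmic degeneration formula collapses to the claimed decomposition type — that the generic point insertion and primitivity of $h$ really do pin down the genus, curve-class and tangency distributions and isolate the $\widehat Z$-contribution in class $\pi^*\beta-C$ — and then identifying the auxiliary contributions precisely enough to recognise them as the Gromov-Witten theory of $E$ and the $R_{g,(1,\beta\cdot E-1)}$. Intertwined with this is the class bookkeeping under the flop: since a flop alters curve classes by multiples of the flopped curve, one must verify that $\pi^*\beta-C$ is sent to the intended class with no parasitic contributions from covers of $C$, and that the relative invariants of the $\bP^2$-bundle reproduce $\left(2\sin\frac{\hbar}{2}\right)^{2g-2}$ on the nose — the automorphism and multiple-cover constants that appear being exactly what $c(g,\beta)$ records, and whatever is not absorbed into the main term being exactly what constitutes $\Delta$.
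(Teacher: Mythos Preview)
Your proposal has a genuine geometric error at its foundation: the degeneration you describe does not have the central fibre you claim. The deformation to the normal cone of $E\subset X$, lifted through the $\bP^1$-bundle, has central fibre $\cL_X\sqcup_{\cL_E}\cL_Y$ with $\cL_X=X\times\bP^1$ (not $\widehat Z$), $\cL_E=E\times\bP^1$, and $\cL_Y=\bP(\cO_Y(-E_\infty)\oplus\cO_Y)$ for $Y=\bP(N_{E/X}\oplus\cO_E)$. No blow-up of $X$ at a point appears anywhere in this picture; ``lifting \dots\ through such a point'' does not alter the normal-cone construction. Consequently the degeneration formula does not produce local invariants of $K_{\widehat X}$ directly. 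What it produces (Theorem~\ref{thm:deg-graphs}, Proposition~\ref{prop:deg}) is a single contributing graph type with three vertices, yielding the product $(e-1)\,R_{g_{V_1},(1,e-1)}(X(\log E),\beta)\cdot R_{g_{V_2}}(\FF_1(\log F),(e-1)B)\cdot R_{g_{V_3},2}(\FF_1(\log F),B+F)$ summed over genus splittings.

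The passage from these two-pointed log invariants of $(X,E)$ to local invariants of $K_{\widehat X}$ is the heart of the proof and goes through two results you do not invoke: first Corollary~6.6 of \cite{GRZ}, which identifies $R_{g,(1,e-1)}(X(\log E),\beta)$ with the maximal-tangency one-pointed invariant $R_{g,(e-1)}(\widehat X(\log\widehat E),\pi^*\beta-C)$ up to lower-order corrections; and second the higher-genus log-local principle of \cite{BFGW}, which converts the latter into $N_g(K_{\widehat X},\pi^*\beta-C)$ plus stationary invariants of $E$. Only after these two steps does the Gopakumar--Vafa substitution give $n_g(K_{\widehat X})$. The $q$-refined tropical correspondence of \cite{Bou} is used not for $R_{g,(1,e-1)}(X(\log E),\beta)$ but for the toric vertex $V_3$ invariant $R_{g,2}(\FF_1(\log F),B+F)$ (Proposition~\ref{prop:vertexC}); the $V_2$ invariant is a Bryan--Pandharipande Hodge integral on $\bP^1$. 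Finally, the flop plays no role in the proof of Theorem~\ref{thm:main_intro}; it enters only later in Theorem~\ref{thm:main_blow-up} to relate $K_{\widehat X}$-invariants to invariants of $W=Bl_pZ$.
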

The proof of Theorem \ref{thm:main_intro} comes from extending arguments in \cite{vGGR} \cite{Wan} to higher genus when $\dim X = 2$, and using the degeneration formula for stable log maps \cite{KLR} and the higher genus log-local principle \cite{BFGW}. The constants $c(g, \beta)$ can be explicitly determined (Equation \ref{eq:proof-of-oP1,5}). In Proposition \ref{prop:vertexC}, we show how an invariant arising from the degeneration can be computed using the relationship between refined tropical curve counting and logarithmic Gromov-Witten invariants of toric surfaces with $\lambda_g$-insertion \cite{Bou}. 

\subsubsection{Blow up formula for projective bundles}
Define $W := Bl_p Z$ to be the blow up of $Z$ at a point $p$ along its infinity section. Let $N_{g,0}(W, \beta+\tilde{L})$ be the genus-g, unmarked closed Gromov-Witten invariant in class $\beta+\tilde{L}$, where $\tilde{L}$ is the strict transform of the fiber of $Z$ passing through $p$. In Section \ref{sec:blow-up}, we prove a blow up formula relating the generating function of $N_{g,1}(Z, \beta+h)$ to $N_{g,0}(W, \beta+\tilde{L})$,

\begin{theorem}[= Theorem \ref{thm:blow-up}]
\label{thm:main_blow-up}
    Let $W := Bl_p Z$ be the blow up of $Z$ at a point $p$ on its infinity section. 
    
    \[
    \sum_{\substack{g \geq 0, \\ \beta \in NE(X)}}N_{g,1}(Z, \beta+h)\hbar^{2g}Q^{\beta} =\sum_{\substack{g \geq 0, \\ \beta \in NE(X)}}\left[c(g, \beta)N_{g,0}(W, \beta + \tilde{L})\hbar^{2g}Q^{\beta}\right] - \Delta
    \]
    where $c(g,\beta) \in \mathbb{Q}$ and the $\Delta$ are given in Theorem \ref{thm:main_intro}. 
\end{theorem}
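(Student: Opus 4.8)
The plan is to deduce Theorem~\ref{thm:main_blow-up} from Theorem~\ref{thm:main_intro} by means of a flop. Comparing the two statements, what has to be shown is the identity of generating functions
\begin{multline*}
\sum_{g,\beta} c(g,\beta)\, n_g(K_{\widehat X},\pi^*\beta-C)\left(2\sin\frac{\hbar}{2}\right)^{2g-2}Q^\beta \\
= \sum_{g,\beta} c(g,\beta)\, N_{g,0}(W,\beta+\tilde L)\,\hbar^{2g}Q^\beta ,
\end{multline*}
with the \emph{same} constants $c(g,\beta)$ and discrepancy $\Delta$; substituting this into Theorem~\ref{thm:main_intro} gives the claim. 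I would obtain it by exhibiting a flop $\phi\colon W\dashrightarrow W^+$ that turns $W=Bl_pZ$ into the projective bundle $\bP(K_{\widehat X}\oplus\cO_{\widehat X})$ over $\widehat X=Bl_{p'}X$, where $p'$ is the image of $p$ under $Z\to X$, and then identifying $N_{g,0}(W,\beta+\tilde L)$ with the local invariant $N_{g,0}(K_{\widehat X},\pi^*\beta-C)$ whose associated Gopakumar-Vafa invariants are, by definition, the $n_g(K_{\widehat X},\pi^*\beta-C)$.

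First I would analyse the geometry of the blow-up. Let $\rho\colon Z\to X$ be the projection and $F=\rho^{-1}(p')\cong\bP^1$ the fibre through $p$, so $\tilde L$ is the strict transform of $F$ in $W$. Since $F$ is a fibre of a $\bP^1$-bundle, $N_{F/Z}\cong\cO\oplus\cO$; blowing up the point $p\in F$ therefore produces $N_{\tilde L/W}\cong\cO(-1)\oplus\cO(-1)$, so $\tilde L$ can be flopped. The fibre $F$ meets the infinity section $S_\infty\subset Z$ transversally at $p$, hence its strict transform in $W$ is disjoint from $\tilde L$ and is left untouched by $\phi$; on the other hand $\tilde L$ meets the zero section $i(X)\cong X$ transversally in the single point over $p'$, and flopping $\tilde L$ replaces $i(X)$ by $\widehat X=Bl_{p'}X$ with normal bundle $\pi^*K_X+C=K_{\widehat X}$, the exceptional curve $C$ of $\pi\colon\widehat X\to X$ being the flopped curve $\tilde L^+$. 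Comparing the two sections of $W^+$ and their opposite normal bundles identifies $W^+\cong\bP(K_{\widehat X}\oplus\cO_{\widehat X})$ and gives $\phi_*(\beta+\tilde L)=\pi^*\beta-C$.

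Next I would transport the invariant across the flop. By flop invariance of Gromov-Witten invariants (Li-Ruan, together with its higher-genus extensions) the genus-$g$ generating series of $W$ and of $W^+$ coincide after the usual analytic continuation of the flopping variable $Q^{[\tilde L]}$; for the class $\beta+\tilde L$, in which the flopping curve occurs with multiplicity one, this expresses $N_{g,0}(W,\beta+\tilde L)$ in terms of $N_{g,0}(W^+,\pi^*\beta-C)$ up to the customary sign. Because $\pi^*\beta-C$ is a curve class carried by the zero section $\widehat X\subset W^+$, every stable map in that class factors through the open subset $\Tot(K_{\widehat X})\subset W^+$ --- no component can lie in the infinity section, by the standard positivity argument, cf.~\cite{vGGR}, \cite{Wan} --- so $N_{g,0}(W^+,\pi^*\beta-C)$ is the local invariant $N_{g,0}(K_{\widehat X},\pi^*\beta-C)$. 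Finally, since $\pi^*\beta-C$ is primitive, the multiple-cover prescription defining $n_g$ collapses to $\sum_g N_{g,0}(K_{\widehat X},\pi^*\beta-C)\hbar^{2g-2}=\sum_g n_g(K_{\widehat X},\pi^*\beta-C)\bigl(2\sin\tfrac{\hbar}{2}\bigr)^{2g-2}$, which is exactly the input needed to pass between the two sides of the displayed identity.

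The step I expect to be the main obstacle is the bookkeeping in the flop formula: tracking the analytic continuation of $Q^{[\tilde L]}$ and its sign, and the genus reshuffling produced by the Gopakumar-Vafa expansion, so that the constants $c(g,\beta)$ and the discrepancy $\Delta$ emerge exactly as in Theorem~\ref{thm:main_intro}. Subsidiary points that still need care are the global identification $W^+\cong\bP(K_{\widehat X}\oplus\cO_{\widehat X})$, as opposed to merely matching the two sections, and the vanishing ``no curve components at infinity'' on $W^+$; both are routine but genuinely geometric.
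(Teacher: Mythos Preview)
Your approach is essentially the paper's: flop $W$ to $\widehat Z=\bP(K_{\widehat X}\oplus\cO_{\widehat X})$, identify $N_{g,0}(\widehat Z,\pi^*\beta-C)$ with the local invariant of $K_{\widehat X}$, and feed this back into Theorem~\ref{thm:main_intro} via Gopakumar--Vafa. Two simplifications the paper makes that remove the bookkeeping you flag as the main obstacle: Li--Ruan's flop theorem already gives a \emph{direct} equality $N_{g,0}(W,\beta+\tilde L)=N_{g,0}(\widehat Z,\pi^*\beta-C)$ for each individual class not a multiple of the flopping curve (no sign, no analytic continuation), and the step $N_{g,0}(\widehat Z,\pi^*\beta-C)=N_{g,0}(K_{\widehat X},\pi^*\beta-C)$ is done by $\mathbb C^*$-localization on the $\bP^1$-fibre (the fixed locus is exactly $\barM_{g,0}(\widehat X,\pi^*\beta-C)$) rather than by arguing that maps avoid the infinity section.
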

The proof of Theorem \ref{thm:main_blow-up} follows from Theorem \ref{thm:main_intro} and the invariance of Gromov-Witten invariants under flops of 3-folds \cite{LR}.

\subsubsection{Open-closed correspondence for projective bundles}

For this section, suppose $X$ is additionally toric, and $\pi: \widehat{X} \rightarrow X$ is a toric blow up at a point. Let $L \subset K_X$ be an outer Aganagic-Vafa (AV) brane (see Section 2.4, \cite{FL} for more details on AV-branes). Defined using stable relative maps \cite{FL}, let $O_g(K_X/L, \beta+\beta_0, 1)$ be the genus-$g$, 1-holed, open Gromov-Witten invariant with boundary on $L$ in framing-0, winding-1 and curve class $\beta+\beta_0 \in H_2(K_X, L)$, where $\beta_0 \in H_2(K_X, L)$ is a relative homology class with boundary on $L$. Let $n^{open}_{g}(K_X/L, \beta+\beta_0, 1)$ be the open-BPS invariant corresponding to $O_g(K_X/L, \beta+\beta_0, 1)$ defined by multiple cover formulas in \cite{MV}. In Section \ref{sec:op}, we use Theorem \ref{thm:main_intro} to prove an open-closed correspondence for projective bundles in all-genus,

\begin{theorem}[= Theorem \ref{thm:op}]
\label{thm:intro_op}
    Suppose that $X$ is a toric Fano surface, $Z = \bP(K_X \oplus \cO_X)$, and $K_X$ the toric canonical bundle. 
    
    \begin{multline*}
    \sum_{\substack{g \geq 0, \\ \beta \in NE(X)}} N_{g,1}(Z, \beta+h)\hbar^{2g}Q^{\beta} = \\ \sum_{\substack{g \geq 0, \\ \beta \in NE(X)}}\left[(-1)^{g+1}c(g, \beta)n^{open}_{g}(K_X/L, \beta+\beta_0, 1)\left(2\sin\frac{\hbar}{2}\right)^{2g - 2}Q^{\beta}\right] - \Delta
    \end{multline*}
    where $c(g,\beta) \in \mathbb{Q}$ and $\Delta$ are as in Theorem \ref{thm:main_intro}.
\end{theorem}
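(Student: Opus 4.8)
The plan is to deduce Theorem~\ref{thm:intro_op} from Theorem~\ref{thm:main_intro} together with a single ``open $=$ closed'' identification of Gopakumar--Vafa invariants. Since Theorem~\ref{thm:main_intro} already equates $\sum_{g,\beta} N_{g,1}(Z,\beta+h)\hbar^{2g}Q^{\beta}$ with $\sum_{g,\beta} c(g,\beta)\,n_g(K_{\widehat X},\pi^*\beta-C)\left(2\sin\frac{\hbar}{2}\right)^{2g-2}Q^{\beta}-\Delta$, and since the rational constants $c(g,\beta)$, the factors $\left(2\sin\frac{\hbar}{2}\right)^{2g-2}$ and the discrepancy $\Delta$ are literally the same data appearing in Theorem~\ref{thm:intro_op}, it suffices to establish the term-by-term identity
\[
n_g\!\left(K_{\widehat X},\pi^*\beta-C\right)\;=\;(-1)^{g+1}\,n^{open}_{g}\!\left(K_X/L,\beta+\beta_0,1\right)
\]
for all $g\ge 0$ and $\beta\in NE(X)$, under the standing hypothesis that $X$ is toric Fano and that $\pi:\widehat X\to X$, $L\subset K_X$ are the associated toric blow up and outer AV brane.

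First I would set up the geometric dictionary behind this identity. Closing up the outer brane $L$ in the toric diagram of $K_X$ adjoins a new vertex, i.e.\ replaces $K_X$ by $K_{\widehat X}$ with $\widehat X=Bl_pX$; under this move a winding-$1$ boundary on $L$ is traded for the exceptional curve $C$, so that the relative class $\beta+\beta_0\in H_2(K_X,L)$ matches $\pi^*\beta-C$, and framing $0$ matches the canonical polarization of $K_{\widehat X}$. This is the content of the all-genus open/log correspondence of \cite{GRZZ}: combining the all-genus equality there between the open invariants $O_g(K_X/L,\beta+\beta_0,1)$ and the two-pointed logarithmic invariants $R_{g,(1,\beta\cdot E-1)}(X(\log E),\beta)$ with the higher-genus log--local principle \cite{BFGW} already used in the proof of Theorem~\ref{thm:main_intro} (in the form of \cite{vGGR}\cite{Wan}, which ties those same log invariants to the local invariants $N_{g,0}(K_{\widehat X},\pi^*\beta-C)$), one obtains an equality of generating series, hence — after passing to BPS form via the multiple-cover formulas of \cite{GV1}\cite{GV2} on the closed side and of \cite{MV} on the open side — an equality of $n_g$ and $n^{open}_{g}$ up to a universal sign.

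The remaining step is to pin down the sign $(-1)^{g+1}$. I expect it to come from the mismatch between the closed Gopakumar--Vafa resummation (which processes $\lambda_g$ via a point insertion and carries $\left(2\sin\frac{\hbar}{2}\right)^{2g-2}$) and the Mari\~no--Vafa open BPS resummation for a single hole and winding $1$ (which carries an extra power of $2\sin\frac{\hbar}{2}$, reabsorbed into the disc/winding data), combined with an orientation sign for the disc class in the open--log dictionary of \cite{GRZZ}. I would fix the overall sign once and for all by checking both sides of the reduced identity in a small case — for instance $X=\PP^2$ with $\beta$ the line class at $g=0$ and $g=1$, where $n_g(K_{\widehat X},\pi^*\beta-C)$ and $n^{open}_{g}(K_X/L,\beta+\beta_0,1)$ are explicitly known — and then invoke the structural equality of series to propagate it to all $g,\beta$.

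The main obstacle I anticipate is precisely this reconciliation of normalizations. Theorem~\ref{thm:main_intro} produces $n_g(K_{\widehat X},\pi^*\beta-C)$ through a chain — degeneration of $Z$ via \cite{KLR}, then the log--local principle — with its own resummation bookkeeping, while \cite{GRZZ} states the open--log correspondence with its own choice of framing and powers of $2\sin\frac{\hbar}{2}$; showing that the two routes out of $R_{g,(1,\beta\cdot E-1)}(X(\log E),\beta)$, one landing on $n_g(K_{\widehat X},\pi^*\beta-C)$ and the other on $n^{open}_{g}(K_X/L,\beta+\beta_0,1)$, differ by exactly $(-1)^{g+1}$ uniformly in $g$ and $\beta$ will require matching the winding/framing data against the curve-class data with care. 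The toric hypothesis on $X$ enters throughout: it is what makes the outer AV brane and the open invariants of \cite{FL} meaningful, and what guarantees $\widehat X$ is toric so that \cite{GRZZ} is available.
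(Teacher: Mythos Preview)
Your reduction is exactly right: once Theorem~\ref{thm:main_intro} is in hand, the only thing left is the term-by-term identity
\[
n_g\!\left(K_{\widehat X},\pi^*\beta-C\right)\;=\;(-1)^{g+1}\,n^{open}_{g}\!\left(K_X/L,\beta+\beta_0,1\right),
\]
and the paper's proof is literally one line: cite this identity as Corollary~5.5 of \cite{GRZZ} and substitute into Theorem~\ref{thm:main_intro}. In \cite{GRZZ} that corollary is proved directly by flop invariance together with the gluing formula for the Topological Vertex \cite{AKMV}, not through log Gromov--Witten theory at all.

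Your proposed route to the identity --- going through the all-genus open--log correspondence of \cite{GRZZ} and then the log--local principle of \cite{BFGW} --- has a circularity problem. The paper states explicitly (and this matches the logical structure of \cite{GRZZ}) that Corollary~5.5 is an \emph{input} used to prove the open--log correspondence (Theorem~5.13 of \cite{GRZZ}), not a consequence of it. So you cannot invoke the open--log equality to derive the BPS identity without reproving the Topological-Vertex argument anyway. Even setting circularity aside, your route would compare two generating series each carrying its own $\Delta$-type discrepancy (from log--local on the closed side, and from the open--log correspondence on the open side); getting a clean $n_g=(-1)^{g+1}n_g^{open}$ out of that would require showing the two discrepancies match exactly, which is not obvious and not what the sign-check-in-low-degree argument you sketch would establish. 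The clean and logically sound path is the paper's: take the BPS identity as the black-box input from \cite{GRZZ} (proved there via the vertex), and then Theorem~\ref{thm:intro_op} is immediate.
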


As Gromov-Witten invariants and BPS invariants are uniquely determined from each other by multiple cover formulas, the $n^{open}_g(K_X/L, \beta+\beta_0, 1)$ can be defined by the $O_g(K_X/L, \beta+\beta_0, 1)$. Theorem \ref{thm:intro_op} implies that the higher genus open invariants $O_g(K_X/L, \beta+\beta_0, 1)$ are expressed by genus-$g$, 1-pointed, closed invariants $N_{g,1}(Z, \beta+h)$ and $\Delta$, which is expressible by the stationary Gromov-Witten theory of $E$, and genus-$g$, 2-pointed log invariants $R_{g, (1,\beta\cdot E - 1)}(X(\log E), \beta)$. The proof of Theorem \ref{thm:intro_op} relies on an equality of open and closed BPS invariants for toric Calabi-Yau threefolds shown in Corollary 5.5 of \cite{GRZZ}, which we state in Corollary \ref{cor:winding-1}. Theorem \ref{thm:intro_op} extends the genus-0 equality of open and closed Gromov-Witten invariants \cite{Cha} to all-genus. 

We give explicit genus-1 formulas for Theorems \ref{thm:main_intro}, \ref{thm:main_blow-up} in Corollaries \ref{cor:maing1}, \ref{cor:blow-up12}  respectively, using formulas from the Appendix. 


\begin{remark}
\label{rem:g=0}
The invariant $N_{0,1}(Z, \beta+h)$ was equated with the genus-0, 1-holed, open Gromov-Witten invariant of a moment torus fiber in the toric Calabi-Yau 3-fold $K_X$ in \cite{Cha}. By the remarks made in Section 2.2, \cite{GRZ}, the genus-0, open invariant of moment fiber of $K_X$ is equal to the genus-0, open invariant $O_0(K_X/L, \beta+\beta_0, 1)$ of an outer AV-brane. In \cite{Wan}, $N_{0,1}(Z)$ is related to the genus-0, 2-pointed, logarithmic Gromov-Witten invariant $R_{0,2}(X(\log E), \beta)$. A genus-0 correspondence between $O_0(K_X/L, \beta+\beta_0, 1)$ and $R_{0,2}(X(\log E), \beta)$ is made in \cite{GRZ}. Hence in genus-0, these results suggest equalities (that are up to rational constants depending on $\beta$) between $N_{0,1}(Z, \beta+h)$, $O_0(K_X/L, \beta+\beta_0, 1)$, and $R_{0,2}(X(\log E), \beta)$. These results are summarized in Figure \ref{fig:g=0_triangle}. We have that Theorem \ref{thm:main_intro}, Theorem 5.13 of \cite{GRZZ}, and Theorem \ref{thm:intro_op} extends these genus-0 results to higher genus. 

\begin{figure}[h!]
    \centering
    \begin{tikzcd}
    O_0(K_X/L, \beta+\beta_0, 1) \arrow[rr, equal]  && N_{0,1}(Z, \beta+h) \arrow[dl, equal]\\
    & R_{0,2}(X(\log E), \beta) \arrow[ul, equal]
    \end{tikzcd}
    \caption{Genus-0 equalities, holding up to explicitly determined rational constants depending on $\beta \in NE(X)$, between open, log, and closed invariants associated to $(X,E).$ See Remark \ref{rem:g=0}.}
    \label{fig:g=0_triangle}
\end{figure}

\end{remark}

\begin{remark}
    This work shares some similarities with results in \cite{GRZZ}. Both this work and Theorem 5.2, \cite{GRZZ} relate higher genus 2-pointed log invariants $R_{g,2}(X(\log E), \beta)$ to Gopakumar-Vafa invariants of $K_{\widehat{X}}$. The main difference between the two works is in the way $R_{g,2}(X(\log E), \beta)$ is obtained; here we use the degeneration formula to obtain $R_{g,2}(X(\log E), \beta)$ from higher genus invariants $N_{g,1}(Z, \beta+h)$ of the projective bundle, while \cite{GRZZ} uses the scattering diagram of $X(\log E)$ defined from Gross-Siebert mirror symmetry and computes tropical curves with 2 unbounded legs in the dual intersection complex of $X(\log E)$. Corollary \ref{cor:winding-1} is also used to prove a higher genus open-log correspondence in Theorem 5.13, \cite{GRZZ}; here we use it to show a higher genus open-closed correspondence for projective bundles. The discrepancy $\Delta$ in Theorem \ref{thm:main_intro} is obtained in a similar procedure as the $\Delta$ in Theorem 5.2, \cite{GRZZ}, however we note that the two discrepancies are not equal.
\end{remark}

\begin{remark}
Projective bundles appear in Gromov-Witten theory, as "bubble components" in expanded degenerations of stable relative maps \cite{Li}. It was shown in \cite{Fan} that if two vector spaces $V_1$ and $V_2$ have the same Chern classes, then the Gromov-Witten theory of their projectivizations $\bP(V_i)$ are equal. In \cite{CGT}, it is shown that the Virasoro constraints are satisfied for toric projective bundles if and only if they are satisfied for the base. 
\end{remark}

\subsection{Notation}
Denote $A_d(X)$ to be the Chow group of $d$-dimensional cycles of $X$, and $A^d(X)$ the group of codimension-$d$ cycles. Denote $NE(X)$ to be the effective curve classes of $X$. We  write $[pt] \in H^{top}(X)$ to be the Poincar\'e dual of a point in $X$. If $X$ is a toric variety, we write $\partial X$ to be its toric boundary. We will often notationally suppress the curve class or log structure and write $N_{g,1}(Z, \beta+h)$ as $N_{g,1}(Z)$, or $R_{g,(p,q)}(X(\log E), \beta)$ as $R_{g,(p,q)}(X)$. We shall use formal variables $Q^{\beta}$ to label effective curve classes, and $\hbar^{2g}$ to label the genus. Let $q = e^{i\hbar}$.

\subsection{Outline of the paper}
In Section \ref{sec:deg}, we describe the degeneration argument and the computation of higher genus invariants. In Section \ref{sec:proof-of-main}, we prove Theorem \ref{thm:main_intro}. In Section \ref{sec:blow-up}, we prove Theorem \ref{thm:main_blow-up}. In Section \ref{sec:op}, we prove Theorem \ref{thm:intro_op}. 

\subsection{Acknowledgements}
This work grew out of the author's PhD thesis at Northwestern University. I am very grateful to my PhD advisor Eric Zaslow, as well as to Helge Ruddat, Tim Gr\"afnitz, Pierrick Bousseau, Yu Wang, and Mark Gross for helpful conversations and suggestions.

\section{Degeneration of projective bundles}
\label{sec:deg}

We use the degeneration formula for stable log maps \cite{KLR} to compute the invariants $N_{g,1}(Z, \beta+h)$. When $g = 0$, $N_{0,1}(Z, \beta+h)$ was computed in \cite{Wan}. We explain how some of the arguments in \cite{Wan} \cite{vGGR} can be extended to higher genus when $\dim X = 2$.

\subsection{Degeneration}
\label{subsec:deg}

We take a degeneration of $Z$ (see Section 4, \cite{Wan}) to a normal crossings singular fiber. Let $\mathscr{X} = Bl_{E\times 0}(X\times \mathbb{A}^1)$ be the deformation to the normal cone $\pi: \mathscr{X}  \rightarrow \mathbb{A}^1$. The fiber $\mathscr{X}^{-1}(t)$ when $t \neq 0$ is isomorphic to $X$. The special fiber $\mathscr{X}^{-1}(0)$ is isomorphic to $X \sqcup_E \mathbb{P}(N_{E/X}\oplus \mathcal{O}_E)$. Denote $Y$ to be the exceptional hypersurface $\mathbb{P}(N_{E/X}\oplus \mathcal{O}_E)$. Let $E_0$ and $E_{\infty}$ be the sections of $Y$ corresponding to the summands $N_{E/X}$ and $\mathcal{O}_E$ respectively. Let $\pi_Y: Y \rightarrow E_0$ be the projection map. We will at times write $\pi_Y$ as $Y \rightarrow E$ when there is no confusion.

Let $\mathscr{E} := \overline{\pi^{-1}(E\times\mathbb{A}^1 \setminus E \times 0)}$ be the strict transform of $E \times \mathbb{A}^1$. Define the space $\mathcal{L} = \mathbb{P}(\mathcal{O}_{\mathscr{X}}(-\mathscr{E}) \oplus \mathcal{O}_{\mathscr{X}})$ on $\mathscr{X}$, which serves as a degeneration of $Z$, as the generic fiber $\mathcal{L}_t$ for $t \neq 0$ is isomorphic to $Z$, and the special fiber $\cL_0$ is isomorphic to $X\times\mathbb{P}^1 \sqcup_{E\times\mathbb{P}^1} \mathbb{P}(\mathcal{O}_Y(-E_{\infty}) \oplus \mathcal{O}_Y).$ Denote $\cL_X := X \times \bP^1, \cL_E := E \times \bP^1$ and $\cL_Y := \mathbb{P}(\mathcal{O}_Y(-E_{\infty}) \oplus \mathcal{O}_Y)$, hence $\cL_0 = \cL_X \sqcup_{\cL_E} \cL_Y$. We have projection maps $\pi_{\cL}: \cL \rightarrow \mathbb{A}^1$, $\pi_{\cL_Y}: \cL_Y \rightarrow Y$, and $\pi_{\cL_X}: \cL_X \rightarrow X$. The restriction of $\cL_Y$ onto a fiber of $Y \rightarrow E$ is the first Hirzebruch surface $\FF_1 = \bP(\cO_{\bP^1}(-1) \oplus \cO_{\bP^1})$.

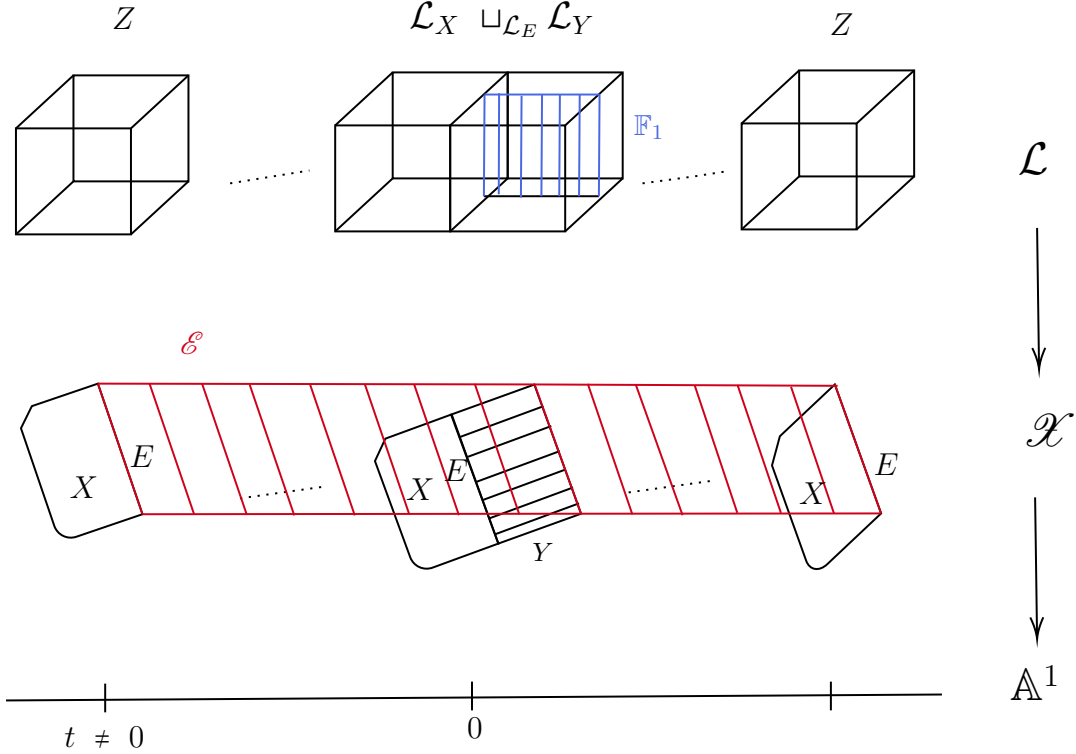
\begin{figure}[h!]
    \centering

\tikzset{every picture/.style={line width=0.75pt}} 

\begin{tikzpicture}[x=0.75pt,y=0.75pt,yscale=-1,xscale=1]

\draw    (79,457.5) -- (551,454.5) ;
\draw   (114.5,374.86) .. controls (109.9,376.43) and (104.9,373.97) .. (103.33,369.37) -- (86.53,320.12) -- (92.02,308.95) -- (125.33,297.59) -- (147.81,363.5) -- cycle ;
\draw   (294.16,390.45) .. controls (289.58,392.13) and (284.52,389.78) .. (282.84,385.21) -- (265.06,336.68) -- (270.31,325.36) -- (303.44,313.22) -- (327.29,378.32) -- cycle ;
\draw   (493.18,388.71) .. controls (489.04,392.61) and (484.26,391.8) .. (482.51,386.89) -- (465.3,338.89) -- (469.28,324.14) -- (497,298) -- (520.33,363.1) -- cycle ;
\draw   (327.29,378.32) -- (303.74,313.12) -- (345.45,298.05) -- (369,363.25) -- cycle ;
\draw  [dash pattern={on 0.84pt off 2.51pt}]  (197,355.5) -- (240,349.5) ;
\draw  [dash pattern={on 0.84pt off 2.51pt}]  (393,352.5) -- (436,346.5) ;
\draw   (245,167.25) -- (303,167.25) -- (303,220.75) -- (245,220.75) -- cycle ;
\draw   (274,140.5) -- (332,140.5) -- (332,194) -- (274,194) -- cycle ;
\draw   (303,167.25) -- (361,167.25) -- (361,220.75) -- (303,220.75) -- cycle ;
\draw   (332,140.5) -- (390,140.5) -- (390,194) -- (332,194) -- cycle ;
\draw   (450,166.25) -- (508,166.25) -- (508,219.75) -- (450,219.75) -- cycle ;
\draw   (479,139.5) -- (537,139.5) -- (537,193) -- (479,193) -- cycle ;
\draw    (245,167.25) -- (274,140.5) ;
\draw    (303,167.25) -- (332,140.5) ;
\draw    (245,220.75) -- (274,194) ;
\draw    (303,220.75) -- (332,194) ;
\draw    (361,167.25) -- (390,140.5) ;
\draw    (361,220.75) -- (390,194) ;
\draw    (450,166.25) -- (479,139.5) ;
\draw    (508,166.25) -- (537,139.5) ;
\draw    (450,219.75) -- (479,193) ;
\draw    (508,219.75) -- (537,193) ;
\draw    (598,355) -- (598.97,424.5) ;
\draw [shift={(599,426.5)}, rotate = 269.2] [color={rgb, 255:red, 0; green, 0; blue, 0 }  ][line width=0.75]    (10.93,-3.29) .. controls (6.95,-1.4) and (3.31,-0.3) .. (0,0) .. controls (3.31,0.3) and (6.95,1.4) .. (10.93,3.29)   ;
\draw    (599,219) -- (599.97,288.5) ;
\draw [shift={(600,290.5)}, rotate = 269.2] [color={rgb, 255:red, 0; green, 0; blue, 0 }  ][line width=0.75]    (10.93,-3.29) .. controls (6.95,-1.4) and (3.31,-0.3) .. (0,0) .. controls (3.31,0.3) and (6.95,1.4) .. (10.93,3.29)   ;
\draw  [dash pattern={on 0.84pt off 2.51pt}]  (400,196.5) -- (426,193) -- (443,190.33) ;
\draw  [dash pattern={on 0.84pt off 2.51pt}]  (192,196.5) -- (232,190) ;
\draw   (84,168.75) -- (142,168.75) -- (142,222.25) -- (84,222.25) -- cycle ;
\draw   (113,142) -- (171,142) -- (171,195.5) -- (113,195.5) -- cycle ;
\draw    (84,168.75) -- (91.23,162.08) -- (113,142) ;
\draw    (142,168.75) -- (171,142) ;
\draw    (84,222.25) -- (113,195.5) ;
\draw    (142,222.25) -- (171,195.5) ;
\draw    (312,334.5) -- (354,319) ;
\draw    (316.23,347.16) -- (358.23,331.66) ;
\draw    (319,356.5) -- (361,341) ;
\draw    (323,365.5) -- (365,350) ;
\draw    (308,324.5) -- (350,309) ;
\draw    (326,373.5) -- (368,358) ;
\draw    (314,448) -- (314,462.5) ;
\draw    (129,449) -- (129,463.5) ;
\draw    (495,448) -- (495,462.5) ;
\draw    (320,203) -- (378,203) ;
\draw [color={rgb, 255:red, 74; green, 104; blue, 226 }  ,draw opacity=1 ]   (320,203) -- (320.33,151.67) ;
\draw [color={rgb, 255:red, 74; green, 104; blue, 226 }  ,draw opacity=1 ]   (320.33,151.67) -- (378.33,151.67) ;
\draw [color={rgb, 255:red, 74; green, 104; blue, 226 }  ,draw opacity=1 ]   (378,203) -- (378.33,151.67) ;
\draw [color={rgb, 255:red, 74; green, 104; blue, 226 }  ,draw opacity=1 ]   (327.5,202) -- (327.67,151) ;
\draw [color={rgb, 255:red, 74; green, 104; blue, 226 }  ,draw opacity=1 ]   (338.67,203.67) -- (339,152.33) ;
\draw [color={rgb, 255:red, 74; green, 104; blue, 226 }  ,draw opacity=1 ]   (349,203) -- (349.33,151.67) ;
\draw [color={rgb, 255:red, 74; green, 104; blue, 226 }  ,draw opacity=1 ]   (368,203) -- (368.33,151.67) ;
\draw [color={rgb, 255:red, 74; green, 104; blue, 226 }  ,draw opacity=1 ]   (358,203) -- (358.33,151.67) ;
\draw [color={rgb, 255:red, 208; green, 2; blue, 27 }  ,draw opacity=1 ]   (125.33,297.59) -- (345.58,298) ;
\draw [color={rgb, 255:red, 208; green, 2; blue, 27 }  ,draw opacity=1 ]   (147.81,363.5) -- (369,363.25) ;
\draw [color={rgb, 255:red, 208; green, 2; blue, 27 }  ,draw opacity=1 ]   (369,363.25) -- (520.33,363.1) ;
\draw [color={rgb, 255:red, 208; green, 2; blue, 27 }  ,draw opacity=1 ]   (345.58,298) -- (498.4,298.69) ;
\draw [color={rgb, 255:red, 208; green, 2; blue, 27 }  ,draw opacity=1 ]   (147.81,363.5) -- (125.33,297.59) ;
\draw [color={rgb, 255:red, 208; green, 2; blue, 27 }  ,draw opacity=1 ]   (520.33,363.1) -- (497,298) ;
\draw [color={rgb, 255:red, 208; green, 2; blue, 27 }  ,draw opacity=1 ]   (173.87,363.6) -- (151.4,297.69) ;
\draw [color={rgb, 255:red, 208; green, 2; blue, 27 }  ,draw opacity=1 ]   (200.37,363.6) -- (177.9,297.69) ;
\draw [color={rgb, 255:red, 208; green, 2; blue, 27 }  ,draw opacity=1 ]   (224.37,364.1) -- (201.9,298.19) ;
\draw [color={rgb, 255:red, 208; green, 2; blue, 27 }  ,draw opacity=1 ]   (254.87,364.1) -- (232.4,298.19) ;
\draw [color={rgb, 255:red, 208; green, 2; blue, 27 }  ,draw opacity=1 ]   (282.37,363.6) -- (259.9,297.69) ;
\draw [color={rgb, 255:red, 208; green, 2; blue, 27 }  ,draw opacity=1 ]   (307.4,364) -- (284.9,298.19) ;
\draw [color={rgb, 255:red, 208; green, 2; blue, 27 }  ,draw opacity=1 ]   (337.87,363.6) -- (315.4,297.69) ;
\draw [color={rgb, 255:red, 208; green, 2; blue, 27 }  ,draw opacity=1 ]   (394.87,363.6) -- (372.4,297.69) ;
\draw [color={rgb, 255:red, 208; green, 2; blue, 27 }  ,draw opacity=1 ]   (369,363.25) -- (345.58,298) ;
\draw [color={rgb, 255:red, 208; green, 2; blue, 27 }  ,draw opacity=1 ]   (420.2,364) -- (396.58,298) ;
\draw [color={rgb, 255:red, 208; green, 2; blue, 27 }  ,draw opacity=1 ]   (447.8,362.8) -- (426.4,298.69) ;
\draw [color={rgb, 255:red, 208; green, 2; blue, 27 }  ,draw opacity=1 ]   (469.8,362.8) -- (447.9,298.19) ;
\draw [color={rgb, 255:red, 208; green, 2; blue, 27 }  ,draw opacity=1 ]   (496.6,363.2) -- (474.9,299.19) ;

\draw (109.6,343) node [anchor=north west][inner sep=0.75pt]    {$X$};
\draw (139.8,327.13) node [anchor=north west][inner sep=0.75pt]    {$E$};
\draw (477.53,345.93) node [anchor=north west][inner sep=0.75pt]    {$X$};
\draw (515.13,331.2) node [anchor=north west][inner sep=0.75pt]    {$E$};
\draw (298.15,334.5) node [anchor=north west][inner sep=0.75pt]    {$E$};
\draw (279.37,344) node [anchor=north west][inner sep=0.75pt]    {$X$};
\draw (493,109.4) node [anchor=north west][inner sep=0.75pt]  [font=\normalsize]  {$Z$};
\draw (343,376.4) node [anchor=north west][inner sep=0.75pt]  [font=\footnotesize]  {$Y$};
\draw (256,103.4) node [anchor=north west][inner sep=0.75pt]  [font=\large]  {$\ \ \ \ \mathcal{L}_{X} \ \sqcup _{\mathcal{L}_{E}}\mathcal{L}_{Y}$};
\draw (588,174.4) node [anchor=north west][inner sep=0.75pt]  [font=\Large]  {$\mathcal{L}$};
\draw (590,311.4) node [anchor=north west][inner sep=0.75pt]  [font=\Large]  {$\mathscr{X} \ $};
\draw (584,438.4) node [anchor=north west][inner sep=0.75pt]  [font=\Large]  {$\mathbb{A}^{1}$};
\draw (310,464.4) node [anchor=north west][inner sep=0.75pt]    {$0$};
\draw (107,469.4) node [anchor=north west][inner sep=0.75pt]    {$t\ \neq \ 0$};
\draw (394.33,160.4) node [anchor=north west][inner sep=0.75pt]  [font=\small]  {$\mathbb{\textcolor[rgb]{0.29,0.41,0.89}{F}}\textcolor[rgb]{0.29,0.41,0.89}{_{1}}$};
\draw (165.33,269.73) node [anchor=north west][inner sep=0.75pt]  [color={rgb, 255:red, 208; green, 2; blue, 27 }  ,opacity=1 ]  {$\mathscr{E}$};
\draw (131,106.4) node [anchor=north west][inner sep=0.75pt]  [font=\normalsize]  {$Z$};

\end{tikzpicture}

    \caption{The degeneration $\mathcal{L} \rightarrow \mathbb{A}^1$ of $Z$ formed by the divisor $\mathscr{E}$ (red). Restricting $\cL_Y$ over a fiber of $Y \rightarrow E$, we have the first Hirzebruch surface $\FF_1 = \bP(\cO_{\bP^1}(-1) \oplus \cO_{\bP^1})$ (blue).}
    \label{fig:deg}
\end{figure}

\subsection{Stable log maps to $\cL$}

We consider stable log maps to the degeneration $\cL \rightarrow \mathbb{A}^1$ of $Z$. We take the divisorial log structure on $\cL$ given by the central fiber $\cL_0$. Let $\barM_{g,n+r}(\cL(\log \cL_0), \beta+h)$ be the moduli space of genus-$g$, basic stable log maps to $\cL$ in the curve class $\beta+h$, with $n$ interior marked points and $r$ relative marked points.

\begin{remark}
\label{rem:deg_curve_class}
    We denote the curve class of stable log maps in $\barM(\cL, \beta+h)$ by $\beta+h$, which has the following meaning: recall that $\beta \in NE(X)$ and $h \in NE(\bP^1)$ is the class of a $\bP^1$-fiber. The class $\beta+h$ lives in $H_*(\cL_t) \cong H_*(Z)$ for $t \neq 0$. When writing $\beta+h$ as a curve class in $\cL$, we mean a global lifting of $\beta+h$ to some class $\alpha \in H_*(\cL)$ satisfying $\alpha|_{\cL_t} = \beta+h$ for all $t \neq 0.$ On the central fiber $\cL_0 = \cL_X \sqcup_{\cL_E} \cL_Y$, if we decompose $\alpha|_{\cL_0} = \beta_X + \beta_Y$ with $\beta_X \in H_*(\cL_X), \beta_Y \in H_*(\cL_Y)$, then $\alpha|_{\cL_0}$ satisfies $(\pi_{\cL_X})_*\beta_X + (\pi_Y)_*\beta_Y = \beta+h$ in the formalism of the degeneration formula \cite{Li}. Applied to curve classes $\beta_Y$ in $\cL_Y$, the map $\pi_Y$ contracts fibers of $Y \rightarrow E.$ For simplicity, we write maps to $\cL$ in class $\alpha$ as maps in class $\beta+h$.
\end{remark}

Stable log maps to the generic fiber $\cL_t \cong Z$ will not intersect the central fiber, and the log structure of $\cL$ restricted to $\cL_t$ is trivial. After forgetting the log structure, the stable log moduli space to $\cL_t$ is isomorphic to the ordinary moduli space of stable maps to $Z$, and the log Gromov-Witten invariants associated to $\barM(\cL_t(\log \cL_E), \beta+h)$ are equal to the ordinary Gromov-Witten invariants associated to $\barM(Z, \beta+h)$. We take the divisorial log structure on $\mathbb{A}^1$ with respect to $\{0\}$. As $\cL\rightarrow \mathbb{A}^1$ is a normal crossings degeneration, it is log smooth. By \cite{GS13}, the moduli space $\barM(\cL/\mathbb{A}^1, \beta+h)$ is proper.

We have the following lemma adapted from Lemma 2.2, \cite{vGGR} relating $[\barM(\cL_t)]^{vir}$ to $[\barM(\cL_0)]^{vir}.$

\begin{lemma}
\label{lem:deg1}
Let $P_0:\barM(\mathcal{L}_0(\text{log }\cL_E), \beta+h) \rightarrow \barM(X, \beta)$ be the map that forgets the log structure, composes with the natural maps $\cL_0 \rightarrow \mathscr{X}_0 \rightarrow X$, and stabilizes, and $P_t:\barM(Z, \beta+h) \rightarrow \barM(X, \beta)$ be the map that composes with the projection $Z \rightarrow X$, and stabilizes. Let $P:\barM(\mathcal{L}(\text{log }\mathcal{L}_0), \beta+h) \rightarrow  \barM(X\times\mathbb{A}^1/\mathbb{A}^1, \beta)$ be the map of moduli spaces that restricts to $P_t$ for $t \neq 0$ or $P_0$. Let $\barM(X\times \mathbb{A}^1/\mathbb{A}^1, \beta)$ be the space of ordinary stable maps to $X\times \mathbb{A}^1$ in curve class $\beta.$ When $t \neq 0$, we have the following equality of virtual cycles,

\[
(P_0)_* [\barM(\mathcal{L}_0(\log \cL_E), \beta+h)]^{vir} = (P_t)_* [\barM(Z, \beta+h)]^{vir} 
\]
\begin{proof}
We have the following commutative diagram,

\begin{center}
\begin{tikzcd}
\barM(\mathcal{L}_0(\log \cL_E), \beta+h) \arrow[hookrightarrow]{r} \arrow[d, "P_0"] & \barM(\mathcal{L}(\text{log }\mathcal{L}_0), \beta+h) \arrow[d, "P"] & \arrow[hook']{l} \barM(Z, \beta+h) \arrow[d, "P_t"]\\
\barM(X, \beta) \arrow[r] \arrow[d] &  \barM(X \times \mathbb{A}^1, \beta) \arrow[d, "f"] & \arrow[l] \barM(X, \beta) \arrow[d] \\
\{0\} \arrow[hookrightarrow, "i_0"]{r} & \mathbb{A}^1 & \arrow[hook', "i_t"']{l} \{t\}
\end{tikzcd}
\end{center}

We have the following equalities, 
\begin{align*}
    (P_0)_*[\barM(\cL_0)]^{vir} &= (P_0)_* i_0^{!}[\barM(\cL/\mathbb{A}^1]^{vir} \\
    &= i_0^{!} P_*[\barM(\cL/\mathbb{A}^1]^{vir} \\
    &= i_t^{!} P_*[\barM(\cL/\mathbb{A}^1]^{vir} \\
    &= (P_t)_* i_t^{!}[\barM(\cL/\mathbb{A}^1]^{vir} \\
    &= (P_t)_* [\barM(Z, \beta+h)]^{vir}
\end{align*}
The 1st and 5th equalities follow from $[\barM(\cL_0)]^{vir} = i_0^{!}[\barM(\cL/\mathbb{A}^1)]^{vir}$ and $[\barM(Z)]^{vir} = i_t^{!}[\barM(\cL/\mathbb{A}^1)]^{vir}$, because of compatibility of virtual classes with base change. The 2nd and 4th equalities follow from commutativity of Gysin pullback with proper pushforward applied to the top left and right squares. The 3rd equality follows because $f$ is the trivial family. 
\end{proof}
\end{lemma}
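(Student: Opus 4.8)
The plan is to obtain the identity as an instance of deformation invariance for logarithmic Gromov--Witten invariants, following the template of Lemma 2.2 in \cite{vGGR}. First I would assemble the commutative diagram of moduli spaces living over $\mathbb{A}^1$ (equipped with the divisorial log structure at $0$): the total space is $\barM(\cL(\log\cL_0),\beta+h)$, its fiber over $0$ is $\barM(\cL_0(\log\cL_E),\beta+h)$, and any fiber over $t\neq 0$ is the ordinary moduli space $\barM(Z,\beta+h)$ since the log structure of $\cL$ is trivial there. The vertical maps $P_0$, $P$, $P_t$ are all of the form ``forget the log structure, push forward through the contraction $\cL_0\to\mathscr{X}_0\to X$ (resp.\ the projection $Z\to X$), then stabilize'', and the crucial structural point is to route all of them through $\barM(X\times\mathbb{A}^1/\mathbb{A}^1,\beta)$: this target is literally a product family over $\mathbb{A}^1$, so the Gysin maps $i_0^!$ and $i_t^!$ agree on its Chow group.

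Granting that set-up, I would invoke three inputs. (i) Properness and log smoothness of $\cL/\mathbb{A}^1$, which by \cite{GS13} makes $\barM(\cL/\mathbb{A}^1,\beta+h)$ proper and equips it with a perfect obstruction theory relative to the log-smooth base; compatibility of virtual classes with base change then gives $[\barM(\cL_0(\log\cL_E),\beta+h)]^{\vir}=i_0^![\barM(\cL/\mathbb{A}^1,\beta+h)]^{\vir}$ and $[\barM(Z,\beta+h)]^{\vir}=i_t^![\barM(\cL/\mathbb{A}^1,\beta+h)]^{\vir}$. (ii) Functoriality of refined Gysin pullback, i.e.\ its commutation with proper pushforward, which lets me move $i_0^!$ and $i_t^!$ across $P$. (iii) Triviality of the product family $\barM(X\times\mathbb{A}^1/\mathbb{A}^1,\beta)\to\mathbb{A}^1$, giving $i_0^!=i_t^!$ on its cycles. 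Chaining these yields
\[
(P_0)_*[\barM(\cL_0)]^{\vir}=(P_0)_*i_0^![\barM(\cL/\mathbb{A}^1)]^{\vir}=i_0^!P_*[\barM(\cL/\mathbb{A}^1)]^{\vir}=i_t^!P_*[\barM(\cL/\mathbb{A}^1)]^{\vir}=(P_t)_*i_t^![\barM(\cL/\mathbb{A}^1)]^{\vir}=(P_t)_*[\barM(Z,\beta+h)]^{\vir},
\]
which is the assertion.

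The hard part will be (i) together with checking that $P_0$, $P$, $P_t$ genuinely assemble into the claimed diagram. Concretely, one must verify that ``contract by $\cL_0\to\mathscr{X}_0\to X$ then stabilize'' is the $t\to 0$ limit of ``project by $Z\to X$ then stabilize'' as a morphism of stacks over $\barM(X\times\mathbb{A}^1/\mathbb{A}^1,\beta)$, and that after forgetting log structures the relevant obstruction theory is pulled back from that target compatibly across the family. The remaining ingredients -- properness from \cite{GS13}, functoriality of Gysin maps, and triviality of a product family -- are standard and only need to be cited. I would also flag that ``$\beta+h$'' as a curve class on $\cL$ must be read as a global lift $\alpha$ with $\alpha|_{\cL_t}=\beta+h$ for all $t\neq 0$ (Remark \ref{rem:deg_curve_class}), and that the conclusion should be seen to be independent of the choice of $\alpha$.
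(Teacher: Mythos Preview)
Your proposal is correct and follows essentially the same approach as the paper: the same commutative diagram over $\mathbb{A}^1$, the same five-step chain of equalities, and the same three justifications (base-change compatibility of virtual classes, commutation of Gysin pullback with proper pushforward, and triviality of the product family $\barM(X\times\mathbb{A}^1/\mathbb{A}^1,\beta)\to\mathbb{A}^1$). Your additional remarks about assembling $P_0,P,P_t$ coherently and about the meaning of the curve class $\beta+h$ on $\cL$ are reasonable caveats but go slightly beyond what the paper itself spells out.
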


\subsection{Degeneration formalism}

\label{sec:degeneration_formula}

We briefly recall the set-up to apply the degeneration formula \cite{KLR}. Let $\barM(\cL_0) := \barM_{g,n+r}(\cL_0(\log \cL_E), \beta+h)$ be the moduli space of genus-$g$, basic stable log maps  with $n$ interior marked points and $r$ relative marked points to $\cL_0(\log \cL_E)$ of curve class $\beta+h$ (see Remark \ref{rem:deg_curve_class}). Stable log maps to the singular fiber $\cL_0$ are represented by bipartite graphs $\Gamma.$ Denote the vertices and edges of $\Gamma$ as $V(\Gamma)$ and $E(\Gamma)$ respectively. We assign to each vertex $V \in V(\Gamma)$ a genus $g_V \geq 0$, a curve class $\beta_V \in H_2(\cL_0)$, and a subset of markings $n_V \subset \{1,2,\ldots, n\}$. Each edge $e \in E(\Gamma)$ is assigned a non-negative integer weight $w_e \geq 0$, which represent relative contact orders with $\cL_E$. We have the following conditions satisfied by $\Gamma$ (see Section 2, \cite{KLR}),

\begin{align*}
    i_*\beta_X + p_*\beta_y &= \beta+h \\
    1 - \chi_{top}(\Gamma) + \sum_V g_V &= g \\
    \bigcup_V n_V &= \{1,2,\ldots, n\} \\
    \sum_e w_e &= \beta\cdot D
\end{align*}
Denote $\Gamma(g,n,\beta)$ to be the set of all bipartite graphs $\Gamma$ satisfying the above conditions.

For each vertex $V$, let $r_V$ be the number of its half-edges. Define the index $i(V)$ to be $X$ or $Y$ depending on if the target of $\barM_V$ is $\cL_X$ or $\cL_Y$, respectively. Let $\barM_V := \barM_{g_V, n_V + r_V}(\cL_{i(V)}(\log \cL_E), \beta_V)$ be the moduli space of genus-$g_V$, basic stable log maps to $\cL_{i(V)}(\log \cL_E)$ with $n_V$ interior marked points and $r_V$ relative marked points in curve class $\beta_V$. Let $ev_{\cL_{i(V)}}:\barM_V \rightarrow \cL_{i(V)}^{n_V}$ and $ev_{\cL_E}:\barM_V \rightarrow \cL_{E}^{r_V}$ be the evaluation map of interior and relative marked points, respectively. We say that a vertex $V$ is an \textit{$X$-vertex} or \textit{$Y$-vertex} if maps in $\barM_V$ map to $\cL_X$ or $\cL_Y$, respectively.

We have the following commutative diagram (see Section 1.3, \cite{KLR}),

\begin{center}
\begin{tikzcd}
 \odot_V \barM_V \arrow[d] \arrow[r] & \prod_V \barM_V \arrow[d, "ev"] \\
 \prod_e E \arrow[r, "\Delta"]{l} & \prod_V \prod_{V \in e}E \times E
\end{tikzcd}
\end{center}
which defines the space $\odot_V \barM_V$ as the fiber product of the diagonal map $\Delta$ and $ev$. A stable map in $\odot_V \barM_V$ satisfies the condition that if two vertices $V_1$ and $V_2$ are joined by an edge $e$, then maps in $\barM_{V_1}$  and $\barM_{V_2}$ will intersect at the same point in the divisor $\cL_E$ with the same contact order $w_e$; this is also known as the predeformability condition (see Section 2.2, \cite{GV}).

Let $\barM_{\Gamma}$ be the moduli space of stable maps whose dual intersection graph collapses to $\Gamma$ with a subset of its nodes corresponding to edges $e_1,\ldots, e_r.$ We have an \'etale map that partially forgets the log structure $\Phi: \barM_{\Gamma} \rightarrow \odot_V \barM_V$ with $\deg \Phi = \frac{\prod_e w_e}{lcm\{w_e\}}$ (Equation 1.4 of \cite{KLR}), and a finite map $F: \barM_{\Gamma} \rightarrow \barM(\cL_0)$ that forgets the graph marking of the stable map. 

\begin{theorem}[Theorem 1.5, \cite{KLR}]
    We have the equality of virtual classes,
    
    \[
    [\barM(\cL_0)]^{vir} = \sum_{\text{bipartite }\Gamma \in \Gamma(g,n,\beta)} \frac{lcm\{w_e\}}{|Aut(\Gamma)|} F_* \Phi^* \Delta^{!} \prod_V [\barM_V]^{vir}
    \]
\end{theorem}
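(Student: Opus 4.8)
The plan is to derive this as the combination of two standard inputs in logarithmic Gromov--Witten theory: a \emph{decomposition formula} expressing $[\barM(\cL_0)]^{vir}$ as a sum over combinatorial (tropical) types, and a \emph{splitting formula} identifying the contribution of each type with the fiber product of the vertex moduli spaces over the diagonals in $\cL_E$.

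\textbf{Step 1 (decomposition).} First I would record that $\cL_0 = \cL_X\sqcup_{\cL_E}\cL_Y$, carrying the log structure pulled back from $\cL\to\AA^1$, is log smooth over the standard log point $\mathrm{pt}^\dagger$ (base change of the log smooth $\cL\to\AA^1$). A basic stable log map to $\cL_0$ then has a well-defined combinatorial type --- the dual graph of its source decorated with vertex genera $g_V$, vertex classes $\beta_V$, markings, and edge weights $w_e$ recording contact orders of the nodes with $\cL_E$ --- that is, a bipartite graph $\Gamma\in\Gamma(g,n,\beta)$. The space $\barM(\cL_0)$ is stratified by these types, and for each $\Gamma$ there is a finite morphism $F\colon\barM_\Gamma\to\barM(\cL_0)$ from the moduli of type-marked maps. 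Pushing forward to the relative Artin fan of $\cL_0$ and using functoriality of virtual classes under logarithmic modifications together with log smoothness (the mechanism of the logarithmic decomposition formula), I would obtain
\[
[\barM(\cL_0)]^{vir}\;=\;\sum_{\Gamma\in\Gamma(g,n,\beta)}\frac{\mathrm{lcm}\{w_e\}}{|\Aut(\Gamma)|}\;F_*\,[\barM_\Gamma]^{vir},
\]
where $\mathrm{lcm}\{w_e\}$ is the tropical multiplicity of the type $\Gamma$, arising as the index of the map from the basic log structure of type $\Gamma$ to the base monoid $\NN$ of $\AA^1$.

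\textbf{Step 2 (splitting).} Fix $\Gamma$ and cut the source of a type-$\Gamma$ map along the nodes over $\cL_E$. The restriction to a vertex $V$ is a basic stable log map to $\cL_{i(V)}(\log\cL_E)$ of class $\beta_V$ with $r_V$ new relative marked points, i.e.\ a point of $\barM_V$; the matching of contact points and orders across each edge $e$ is exactly the condition that the resulting tuple lies in $\odot_V\barM_V$, the fiber product cut out by $\Delta\colon\prod_e E\to\prod_V\prod_{V\in e}E\times E$. This gives the \'etale morphism $\Phi\colon\barM_\Gamma\to\odot_V\barM_V$, whose degree $\prod_e w_e/\mathrm{lcm}\{w_e\}$ (Equation 1.4 of \cite{KLR}) accounts for the compatible log structures at the non-reduced nodes. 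Since $\Phi$ is \'etale, $[\barM_\Gamma]^{vir}=\Phi^*[\odot_V\barM_V]^{vir}$, so it remains to prove $[\odot_V\barM_V]^{vir}=\Delta^!\prod_V[\barM_V]^{vir}$; for this I would use a normalization sequence for the source curve together with the decomposition of the logarithmic cotangent complex of $\cL_0$ along $\cL_E$, which exhibits the logarithmic deformation theory of a map to $\cL_0$ as built from the vertex deformation theories plus the node-smoothing directions. Predeformability is automatic for log maps (cf.\ \cite{GV}), so no expansion/rubber factors intervene. Substituting into Step 1 gives precisely the stated formula.

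The hard part will be the virtual-class identity in Step 2: one must check that the perfect obstruction theory on $\barM(\cL_0)$ over the type-$\Gamma$ locus, pulled back along $\Phi$, agrees with the product obstruction theory of $\prod_V\barM_V$ relative to $\prod_e E$, so that the only discrepancy is the Gysin map $\Delta^!$ --- and then keep careful track of the lattice indices $\mathrm{lcm}\{w_e\}$ and $\deg\Phi$ through the normalization sequence. This requires a detailed analysis of the logarithmic cotangent complex of $\cL_0$ and of basic log structures at nodes; since exactly this bookkeeping is carried out in \cite{KLR}, I would invoke it as a black box here.
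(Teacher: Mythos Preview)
The paper does not prove this statement at all: it is simply quoted verbatim as Theorem 1.5 of \cite{KLR} and used as a black box. So there is no ``paper's own proof'' to compare against.

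Your proposal is a reasonable high-level outline of the strategy actually carried out in \cite{KLR} --- decomposition over tropical types followed by a splitting/gluing formula identifying each type's contribution with a diagonal Gysin pullback of the product of vertex classes --- and you correctly identify the tropical multiplicity $\mathrm{lcm}\{w_e\}$ and the degree of $\Phi$ as the combinatorial bookkeeping. But you yourself concede at the end that the substantive content (the obstruction-theory comparison and the lattice-index tracking) is exactly what \cite{KLR} does, and that you would invoke it as a black box. In that sense your proposal and the paper agree: both ultimately cite \cite{KLR} rather than reprove it. If the intent was to give an independent proof, the sketch as written does not do so; if the intent was to explain why the cited result is plausible, it succeeds at that level.
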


Thus, the degeneration formula expresses $[\barM(\cL_0)]^{vir}$ as a sum over bipartite graphs $\Gamma \in \Gamma(g,n,\beta)$ of virtual classes $[\barM_{V}]^{vir}$ for $V \in V(\Gamma).$ 

\subsection{Bipartite graphs $\Gamma$}

In this section, let $B$ denote the $\bP^1$-fiber class of $\pi_Y: Y\rightarrow E$, and $F$ denote the $\bP^1$-fiber class of $\pi_{\cL_Y}: \cL_Y \rightarrow Y$. In genus-0, the proof for which bipartite graphs $\Gamma$ have a non-trivial contribution to the virtual class is done in \cite{Wan}. We explain how certain lemmas needed from \cite{vGGR} generalize to higher genus when $\dim X = 2.$

\begin{theorem}
\label{thm:deg-graphs}
Let $g \geq 0$. Then, $[\barM_{\Gamma}]^{vir} = 0$, unless $\Gamma$ is of the following form: the edge connecting vertices $V_1$ and $V_3$ has weight $w_{e_1} = 1$, and the edge connecting $V_1$ and $V_2$ has weight $w_{e_2} = \beta\cdot E - 1$. The curve class $\beta \in NE(X)$ is attached to vertex $V_1$, the class $(\beta\cdot E - 1)B$ is attached to vertex $V_2$, and the class $B + F$ is attached to vertex $V_3$. We have $g = g_{V_1} + g_{V_2} + g_{V_3}$ (see Figure \ref{fig:deg_graph}).

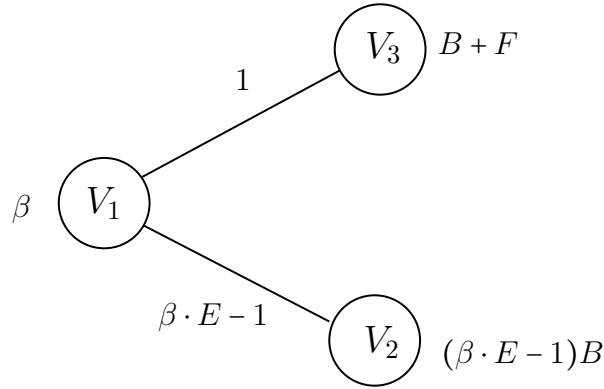
\begin{figure}[h!]
    \centering
    \tikzset{every picture/.style={line width=0.75pt}} 

\begin{tikzpicture}[x=0.75pt,y=0.75pt,yscale=-1,xscale=1]

\draw   (160.04,159.86) .. controls (160.04,147.26) and (170.29,137.05) .. (182.93,137.05) .. controls (195.57,137.05) and (205.82,147.26) .. (205.82,159.86) .. controls (205.82,172.45) and (195.57,182.66) .. (182.93,182.66) .. controls (170.29,182.66) and (160.04,172.45) .. (160.04,159.86) -- cycle ;
\draw   (296.47,229.19) .. controls (296.47,216.6) and (306.72,206.38) .. (319.36,206.38) .. controls (332,206.38) and (342.25,216.6) .. (342.25,229.19) .. controls (342.25,241.79) and (332,252) .. (319.36,252) .. controls (306.72,252) and (296.47,241.79) .. (296.47,229.19) -- cycle ;
\draw   (299.21,82.31) .. controls (299.21,69.71) and (309.46,59.5) .. (322.11,59.5) .. controls (334.75,59.5) and (345,69.71) .. (345,82.31) .. controls (345,94.9) and (334.75,105.12) .. (322.11,105.12) .. controls (309.46,105.12) and (299.21,94.9) .. (299.21,82.31) -- cycle ;
\draw    (202.16,146.63) -- (301.96,92.8) ;
\draw    (203.07,171.26) -- (296.47,219.61) ;

\draw (172.14,150.26) node [anchor=north west][inner sep=0.75pt]  [font=\large]  {$V_1$};
\draw (313.19,71.8) node [anchor=north west][inner sep=0.75pt]  [font=\large]  {$V_3$};
\draw (312.36,220.5) node [anchor=north west][inner sep=0.75pt]  [font=\large]  {$V_2$};
\draw (247.43,91.13) node [anchor=north west][inner sep=0.75pt]    {$1$};
\draw (134.81,154.08) node [anchor=north west][inner sep=0.75pt]    {$\beta $};
\draw (351.61,227.07) node [anchor=north west][inner sep=0.75pt]    {$( \beta \cdot E-1) B$};
\draw (349.72,71.97) node [anchor=north west][inner sep=0.75pt]    {$B+F$};
\draw (208.74,208.82) node [anchor=north west][inner sep=0.75pt]    {$\beta \cdot E-1$};

\end{tikzpicture}
    \caption{Bipartite graphs in the degeneration}
    \label{fig:deg_graph}
\end{figure}

\end{theorem}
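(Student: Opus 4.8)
The plan is to run the genus-$0$ analysis of \cite{Wan} (which itself relies on the degeneration lemmas of \cite{vGGR}) and to pin down the one place where the genus intervenes. The starting point is a computation of the vertex virtual dimensions. For an $X$-vertex $V$ of class $\beta_V=\beta_V^X+d_V\ell$, with $\beta_V^X\in NE(X)$ and $\ell$ the class of a $\bP^1$-fibre of $\cL_X=X\times\bP^1$: since $\cL_E=\pr_X^{-1}(E)$ and $c_1(T_X(-\log E))=0$ because $E\in|-K_X|$, we get $c_1(T_{\cL_X}(-\log\cL_E))=\pr_{\bP^1}^{*}c_1(T_{\bP^1})$, so that
\[
\vdim\barM_V=(\dim\cL_X-3)(1-g_V)+2d_V+n_V+r_V=2d_V+n_V+r_V,
\]
which is independent of $g_V$ because $\dim\cL_X=3$. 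As $\dim\cL_Y=3$ too, the analogous computation for a $Y$-vertex again gives a genus-independent vertex virtual dimension, vanishing in the $B$-direction and contributing $2$ per unit of $F$-degree, and globally $\sum_V\int_{\beta_V}c_1(T_{\cL_{i(V)}}(-\log\cL_E))=2$, consistent with $\vdim\barM(\cL_0)=3$. This genus-independence of all vertex virtual dimensions — exactly where the hypothesis $\dim X=2$ enters, via $\dim\cL_{i(V)}=3$ — is what lets the genus-$0$ bookkeeping of \cite{Wan} be reused verbatim.

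Next I would make two structural reductions. First, the $\bP^1$-budget lies entirely in $\cL_Y$ and equals one $F$-fibre: a component of an $X$-vertex curve of positive $\ell$-degree covers a fibre $\{x\}\times\bP^1$, and if $x\notin E$ it is disjoint from the gluing locus $\cL_E$, contradicting connectedness of the source, while if $x\in E$ it lies inside $\cL_E$, and such components do not occur among the graphs indexing the degeneration formula; hence $d_V=0$ for every $X$-vertex, every $\beta_V\in NE(X)$, and the fibre class $h$ of $Z$ degenerates into $\cL_Y$. Using the structure of $\cL_Y$ as a $\bP^1$-bundle over $Y$ (whose restriction to a fibre of $Y\to E$ is $\FF_1$) to constrain curves of positive $F$-degree, together with $\sum_V(F\text{-degree of }\beta_V)=1$, there is a unique $Y$-vertex $V_3$ of $F$-degree $1$, with class $B+F$, and every other $Y$-vertex has $F$-degree $0$. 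Second, every $Y$-vertex is a leaf: a class-$aB$ curve lies over a single fibre of $Y\to E$ inside a section and meets $\cL_E$ only at the point over $E_0\cap(\text{that fibre})$ with total contact $a$, and the class-$(B+F)$ curve of $V_3$ meets $\cL_E$ only at $B\cap(\cL_E\cap\FF_1)$ with contact $1$; so each $Y$-vertex has exactly one half-edge. As $\Gamma$ is bipartite, connected, and has no $X$--$X$ edge, it must be a star centred at an $X$-vertex — so there is a unique $X$-vertex $V_1$, carrying all of $\beta$, $\Gamma$ is a tree, $\chi_{top}(\Gamma)=1$, and $g=g_{V_1}+\sum_{Y\text{-leaves}}g_V$.

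It remains to show there is exactly one $B$-leaf. Since $V_1$ has $\ell$-degree $0$, its curve is of the form $C\times\{pt\}\subset X\times\bP^1$ with $[C]=\beta$, so its contact points with $\cL_E=E\times\bP^1$ are $(C\cap E)\times\{pt\}$, realizing the tangency profile of $C$ along $E$ in $X$ and accounting for the edges; the edge to $V_3$ absorbs contact order $1$, and the remaining $B$-leaves $V_2^{(1)},\dots,V_2^{(k)}$, of classes $a_jB$, satisfy $\sum_j a_j=\beta\cdot E-1$. That $k=1$ is carried out in genus $0$ in \cite{Wan}: placing the point insertion $ev^{*}[pt]$ on the appropriate component and analyzing the matching conditions across the nodes (the $\bP^1$-coordinates of the contact points, forced to agree from the two sides of each edge) shows that every configuration with $k\ge 2$ has vanishing virtual contribution; by the first reduction the vertex virtual dimensions entering this analysis do not involve the $g_V$, so the genus-$0$ computation applies for all $g\ge 0$. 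Hence $k=1$, $a_1=\beta\cdot E-1$, the class on $V_2:=V_2^{(1)}$ is $(\beta\cdot E-1)B$, the edge $V_1V_3$ has weight $w_{e_1}=1$, the edge $V_1V_2$ has weight $w_{e_2}=\beta\cdot E-1$, and $g=g_{V_1}+g_{V_2}+g_{V_3}$ — the asserted normal form.

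I expect the third reduction to be the only genuine obstacle: ruling out the ``balanced'' bipartite graphs in which the contact order $\beta\cdot E-1$ is split among several $B$-leaves. In genus $0$ this is \cite{Wan}; to promote it to $g\ge 0$ one must check that raising the genera of the leaves cannot revive those contributions, which works precisely because, with all vertex targets threefolds, the vertex virtual dimensions are insensitive to the $g_V$ — the genus enters only through $\chi_{top}(\Gamma)$, pinned to $1$ for a star — so no new loci appear and the genus-$0$ vanishing persists. The remaining checks (effectivity and non-emptiness of the vertex curve classes, and compatibility of the edge weights with the contact orders) are routine.
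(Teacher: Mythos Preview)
Your high-level strategy --- observe that vertex virtual dimensions are genus-independent because the pieces are threefolds, then argue that the genus-$0$ analysis of \cite{Wan}/\cite{vGGR} carries over --- is close to the paper's, but you have misidentified where the genuine higher-genus obstruction lies, and this leaves a real gap.

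The actual technical crux is your \emph{first} reduction, not your third. You assert that every $Y$-vertex carries a class of the form $kB$ or $kB+F$, i.e.\ that $(\pi_{\cL_Y})_*\beta_V$ is a multiple of the fibre of $Y\to E$. This is precisely Proposition~5.3 of \cite{vGGR}, whose proof in genus~$0$ relies on Theorem~4.1 of \cite{vGGR}, which in turn assumes the convexity hypothesis $H^1(C,f^*TY^{\log})=0$ so that the comparison map $u$ is smooth and $u^{!}=u^*$. In higher genus this convexity fails, and genus-independence of the \emph{virtual dimension} does not repair it: the issue is a virtual-class identity (compatibility under pullback), not a dimension count. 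The paper's Lemma~2.5 handles this by relaxing convexity and replacing smooth pullback by Manolache's virtual pullback $u^{!}$, after which the support argument of \cite{vGGR} goes through. Your proposal does not engage with this at all.

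Two smaller points. First, your ``second reduction'' (every $Y$-vertex is a leaf) is argued set-theoretically: an \emph{irreducible} curve of class $aB$ meets $\cL_E$ in a single point. But the moduli space contains reducible and non-reduced curves, and the statement you need is a vanishing of $(ev_{\cL_E})_*[\barM_V]^{vir}$; the paper proves this cleanly by degree reasons (Lemma~2.7), using that $ev_{\cL_E}$ factors through $E\times 0$. Second, the ``only one $B$-leaf'' step is not the delicate one you make it out to be: it follows from the excess-intersection bound $r_V\le 2$ for an $X$-vertex (Lemma~2.3), whose proof --- factoring the evaluation through the diagonal $\bP^1$ and computing $c_{r-1}$ of the excess bundle --- is manifestly genus-independent. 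So the hierarchy of difficulty is inverted in your writeup: the fibre-class constraint on $Y$-vertices is the step that genuinely needs new input in higher genus.
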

\subsubsection{Condition on the X-vertices}

In this section, we show that any $X$-vertex $V$ has at most 2 edges (see Section \ref{sec:degeneration_formula} for definition of $X$-vertex).

\begin{lemma}
\label{lem:deg2}
    Let $\Gamma$ be a graph with an $X$-vertex $V$ with $r_V > 2$ adjacent edges and $g_V \geq 0$, then $[\barM_{\Gamma}]^{vir} = 0.$ 
\begin{proof}
    Since non-surjective maps from a proper, genus-$g_V$ nodal curve to $\bP^1$ are constant, the evaluation map $\barM_V \rightarrow (E\times \bP^1)^{r_V}$ factors through $E^{r_V} \times \bP^1$, where $\bP^1$ is embedded diagonally. In addition, we separate out $\barM_V$ from $\prod_{V' \neq V} \barM_{V'}$. We have the following commutative diagram. 
    
    \begin{center}
    \begin{tikzcd}
    \barM_V \times_{\cL_E^r} \odot_{V' \neq V} \barM_{V'}\arrow[rightarrow]{r} \arrow[d, "ev"] & \barM_V \times \prod_{V' \neq V}\barM_{V'} \arrow[d, "ev"] \\
    (E^r \times \bP^1) \times (E \times \bP^1)^s \arrow[r, "\Delta'"] \arrow[d, "\delta := (id \times diag) \times id"] &  (E^r \times \bP^1)^2 \times (E \times \bP^1)^{2s}  \arrow[d, "\delta' := (id \times diag) \times id"]  \\
    (E \times \bP^1)^r \times (E \times \bP^1)^s \arrow[rightarrow, "\Delta"]{r} & (E \times \bP^1)^{2r} \times (E \times \bP^1)^{2s} 
    \end{tikzcd}
    \end{center}
    
    Let $N, N'$ be the normal bundles of $\Delta, \Delta'$, respectively. Define $A := \delta^* N/N'$, with rank $r-1.$ The excess intersection formula (\cite{Ful}, Theorem 6.3) tells us that,
    
    \[
    \Delta^{!} \alpha = c_{r-1}(ev^* A) \cap (\Delta')^{!} \alpha
    \]
    for $\alpha \in A_*(\barM_V \times \prod_{V' \neq V}\barM_{V'}).$
    
    The normal bundle $M$ of $\delta$ is $T(\bP^1)^{r-1} = \cO_{\bP^1}(2)^{r-1}$, and the normal bundle $M'$ of $\delta'$ is isomorphic to $\cO_{\bP^1}(2)^{2r-2}.$  By the Cartesian property of the bottom square, we have that $A \cong (\Delta')^* M'/M \cong \cO_{\bP^1}(2)^{r-1}.$ We see that $c_{r-1}(A) = 0$ if $r > 2.$ Applying this to the class $[\barM_V]^{vir} \times \prod_{V' \neq V} [\barM_{V'}]^{vir}$, we have the desired result.
\end{proof}
\end{lemma}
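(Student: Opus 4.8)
The plan is to exploit the fact that an $X$-vertex interacts with the $\bP^1$-factor of $\cL_X = X\times\bP^1$ only through a single point, so that the matching condition along $\cL_E = E\times\bP^1$ at $r_V>2$ relative points imposes a repeated constraint in the $\bP^1$-direction. First I would record the geometric input: for a stable log map parametrized by $\barM_V$, the composite with the projection $\cL_X = X\times\bP^1\to\bP^1$ is a degree-$0$ map out of a proper, genus-$g_V$ nodal curve (the curve class carried by an $X$-vertex has vanishing $\bP^1$-fibre degree, which one reads off from the class constraint $i_*\beta_X + p_*\beta_Y = \beta+h$ together with the form of the degeneration), hence constant. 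Consequently all $r_V$ relative marked points of such a map have the same image in $\bP^1$, i.e. the evaluation $ev_{\cL_E}\colon\barM_V\to(E\times\bP^1)^{r_V} = E^{r_V}\times(\bP^1)^{r_V}$ factors through the codimension-$(r_V-1)$ subvariety $E^{r_V}\times\bP^1$ on which the $r_V$ copies of $\bP^1$ are identified (the small diagonal of $\bP^1$).

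Next I would analyse the contribution $F_*\Phi^*\Delta^!\prod_V[\barM_V]^{vir}$ of $\Gamma$ to $[\barM(\cL_0)]^{vir}$ and show that already $\Delta^!\prod_V[\barM_V]^{vir} = 0$. Since refined Gysin pullbacks along disjoint regular embeddings commute, I first glue the edges of $\Gamma$ not meeting $V$, obtaining a class $\alpha$ on $\barM_V\times\big(\odot_{V'\neq V}\barM_{V'}\big)$; it then remains to apply the diagonal $\Delta_{r_V}\colon(E\times\bP^1)^{r_V}\hookrightarrow(E\times\bP^1)^{2r_V}$ over the $r_V$ edges at $V$ to $\alpha$. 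After this matching the node at each of the $r_V$ edges acquires its $\bP^1$-coordinate from the $X$-vertex side, and hence all $r_V$ nodes carry a single common $\bP^1$-coordinate, so $\Delta_{r_V}$ fits into a Cartesian square whose remaining corners involve the correspondingly ``pinched'' diagonal $\Delta'_{r_V}$ of $E^{r_V}\times\bP^1$ and the two codimension-$(r_V-1)$ inclusions that collapse the $\bP^1$-coordinates. The excess bundle $A$ of this square — the cokernel of the inclusion of the normal bundle of $\Delta'_{r_V}$ into the pullback of the normal bundle of $\Delta_{r_V}$ — is the normal bundle of the small diagonal $\bP^1\hookrightarrow(\bP^1)^{r_V}$, namely $A\cong(T\bP^1)^{\oplus(r_V-1)}\cong\cO_{\bP^1}(2)^{\oplus(r_V-1)}$, of rank $r_V-1$ and pulled back from a single $\bP^1$ via $ev$. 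The excess intersection formula (\cite{Ful}, Theorem 6.3) then gives
\[
\Delta_{r_V}^!\,\alpha \;=\; c_{r_V-1}(ev^*A)\,\cap\,(\Delta'_{r_V})^!\,\alpha .
\]
Since $ev^*A$ is pulled back from $\bP^1$ (indeed $ev$ is constant along it), $c_{r_V-1}(ev^*A)$ is the pullback of $c_{r_V-1}\big(\cO_{\bP^1}(2)^{\oplus(r_V-1)}\big)\in A^{r_V-1}(\bP^1)$, which vanishes as soon as $r_V-1\geq 2$, i.e. $r_V>2$, because $\dim\bP^1 = 1$. Hence $\Delta_{r_V}^!\,\alpha = 0$, so $\Delta^!\prod_V[\barM_V]^{vir} = 0$ and therefore $[\barM_\Gamma]^{vir} = 0$.

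I expect the main obstacle to be the precise set-up of this Cartesian square and the identification of the excess bundle: one has to check carefully that, after the fiber-product step, the gluing node at each of the $r_V$ edges at $V$ genuinely lands in the pinched $\bP^1$-locus on both the $\barM_V$-slot and the adjacent-vertex-slot of the diagonal, so that the evaluation map factors through the relevant codimension-$(r_V-1)$ subvariety and $A$ really is $\cO_{\bP^1}(2)^{\oplus(r_V-1)}$ on one $\bP^1$. By contrast, the tangent/normal bundle bookkeeping (all $E$-factors cancel and only the $\bP^1$-directions survive) and the verification that $X$-vertex curve classes have zero $\bP^1$-fibre degree are routine.
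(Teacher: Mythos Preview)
Your proposal is correct and follows essentially the same approach as the paper: both exploit the factorization of $ev_{\cL_E}$ through the small diagonal $E^{r_V}\times\bP^1\hookrightarrow(E\times\bP^1)^{r_V}$, set up the corresponding Cartesian square for the diagonal pullback, and apply the excess intersection formula to identify the excess bundle as $\cO_{\bP^1}(2)^{\oplus(r_V-1)}$ pulled back from a single $\bP^1$, whose top Chern class vanishes once $r_V>2$. Your reduction step of first gluing the edges away from $V$ and then handling the $r_V$ edges at $V$ is a harmless reordering of the same computation.
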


\subsubsection{Conditions on the Y-vertices}

Let $V$ be a $Y$-vertex. Recall that we have the projections $\pi_{\cL_Y}: \cL_Y \rightarrow Y$ and $\pi_Y: Y \rightarrow E.$  We first show that $(\pi_{\cL_Y})_*\beta_V$ must be a multiple of a fiber class of $Y \rightarrow E$. 

\begin{lemma}
\label{lem:deg3}
Let $g_V \geq 0$. If the curve class $(\pi_{\cL_Y})_*\beta_V$ is not a multiple of a fiber class of $Y \rightarrow E$, then $(ev_{\cL_E})_*[\barM_V]^{vir} = 0$.

\begin{proof}
This lemma is a higher genus version of Proposition 5.3, \cite{vGGR}. We write $Y(\log E_0)$ to be the log scheme $Y$ given by the divisorial log structure $E_0$. We have the following commutative diagram (Diagram 4.1, \cite{vGGR}), 

\begin{equation}
\label{eq:costello_diagram}
     \begin{tikzcd}
    \barM(Y(\log E_0), (\pi_{\cL_Y})_*\beta_V) \arrow[r, "u"] \arrow[d] & \cM \arrow[r, "v"]\arrow[d] & \barM(E, (\pi_Y\circ  \pi_{\cL_Y})_*\beta_V) \arrow[d] \\
    \mathcal{M}_{g,n,H_2(Y)^{+}}^{log} \arrow[r, "id"] & \mathcal{M}_{g, n, H_2(Y)^{+}}^{log} \arrow[r, "\nu"] & \mathcal{M}_{g,n, H_2(E)^{+}}
    \end{tikzcd}   
\end{equation}
where $\mathcal{M}_{g,n,H_2(Y)^{+}}^{log}$ is the stack of genus-$g$, $n$-marked, pre-stable log curves that additionally remembers the curve class of each irreducible component \cite{Cos}. The space $\cM$ is defined to make the right hand square Cartesian, and its obstruction theory is the pullback obstruction theory by $\nu$. By \cite{Man}, we have,

\[
\nu^{!}[\barM_{g,n}(E, (\pi_Y \circ \pi_{\cL_Y})_* \beta_V)]^{vir} = [\cM]^{vir}
\]
We also have the following short exact sequence,

\[
0 \rightarrow T(Y(\log E_0))^{log}/E_0 \rightarrow T(Y(\log E_0))^{log} \rightarrow T E_0 \rightarrow 0
\]
where $T(Y(\log E_0))^{log}$ is the log tangent bundle of $Y(\log E_0)$, and $T(Y(\log E_0))^{log}/E_0$ is the log tangent bundle of $Y(\log E_0)$ relative to $E_0$. The sequence induces a compatible triple for the left hand square in Diagram \ref{eq:costello_diagram}, and we have a well-defined virtual pullback $u^{!}$ \cite{Man}. Diagram \ref{eq:costello_diagram} is used to prove Theorem 4.1, \cite{vGGR}, which we state for convenience here,
\begin{theorem}[Theorem 4.1, \cite{vGGR}]
    \label{thm:vGGR1}
    Let $\pi_Y:Y \rightarrow E$ be a log smooth morphism where $E$ has trivial log structure. Suppose that for every log smooth morphism $f:C \rightarrow Y$ of genus $g$ and class $\beta$ we have $H^1(C,  f^* TY^{log}) = 0$, then
    
    \[   
    \barM_{g,n}(Y(\log E_0), (\pi_{\cL_Y})_*\beta_V)  = u^* \nu^{!}[\barM_{g,n}(E, (\pi_Y\circ  \pi_{\cL_Y})_* \beta)]^{vir}
    \]
    provided that $[\barM_{g,n}(E, (\pi_Y\circ  \pi_{\cL_Y})_* \beta_V)]^{vir} \neq 0.$ Hence, if $[\barM_{g,n}(E, (\pi_Y\circ  \pi_{\cL_Y})_* \beta_V)]^{vir}$ is represented by a cycle supported on the locus $W \subset \barM_{g,n}(E, (\pi_Y\circ  \pi_{\cL_Y})_* \beta_V)$, then $[\barM_{g,n}(Y(\log E_0), \beta_V)]^{vir}$ is represented by a cycle supported on $w^{-1}(W)$ where $w = v \circ u.$ 
\end{theorem}
The convexity assumption in Theorem \ref{thm:vGGR1} guarantees that $u$ is smooth, in which case $u^{!}$ agrees with smooth pullback $u^*$. For our purposes, we relax the convexity assumption in Theorem \ref{thm:vGGR1}, and it suffices to extend Theorem 4.1 to higher genus in the non-convex setting. We use $u^{!}$ instead of $u^*.$ By \cite{Man}, Corollary 4.9, the virtual classes are related by, 

\[
    [\barM_{g,n}(Y(\log E_0), (\pi_{\cL_Y})_*\beta_V)]^{vir}  = u^{!} \nu^{!}[\barM_{g,n}(E, (\pi_Y\circ  \pi_{\cL_Y})_* \beta)]^{vir}
\]
In particular, if $[\barM(E, \pi_{Y*}\beta)]^{vir}$ is a cycle supported on some locus $W$, then $[\barM(Z, \beta)]^{vir}$ is a cycle supported on $v^{-1}u^{-1}(W)$.
Consequently, Lemma 5.1 and Proposition 5.3 of \cite{vGGR} imply that if $(\pi_{\cL_Y})_* \beta_V$ is not a multiple of a fiber class of $Y \rightarrow E$, then $(ev_{\cL_E})_*[\barM_V]^{vir} = 0$. 

\end{proof}
\end{lemma}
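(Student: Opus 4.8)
The plan is to push the problem down the tower of $\bP^1$-bundles $\cL_Y \xrightarrow{\pi_{\cL_Y}} Y \xrightarrow{\pi_Y} E$ until the target is the (non-convex) elliptic curve $E$, where a vanishing is visible, and then transport the resulting cycle-support statement back up. Write $\gamma := (\pi_{\cL_Y})_*\beta_V \in H_2(Y)$ and $d[E] := (\pi_Y)_*\gamma \in H_2(E)\cong\ZZ$. The hypothesis that $\gamma$ is not a multiple of the fiber class $B$ of $\pi_Y$ is precisely the statement $d\geq 1$: the projected curve covers $E$ with positive degree. Composing a basic stable log map to $\cL_Y(\log\cL_E)$ with $\pi_{\cL_Y}$, and noting $\cL_E = \pi_{\cL_Y}^{-1}(E_0)$ so that relative markings land on the divisor $E_0\subset Y$, produces morphisms of moduli stacks $\barM_V \xrightarrow{\ \psi\ } \barM(Y(\log E_0),\gamma) \xrightarrow{\ w\ } \barM(E,d[E])$. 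I will show that $[\barM_V]^{vir}$ is represented by a cycle whose image under $ev_{\cL_E}$ has dimension strictly below $\vdim\barM_V$; since $\cL_E^{r_V}$ is smooth, this forces $(ev_{\cL_E})_*[\barM_V]^{vir}=0$.

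For the bottom step I would reprove Theorem \ref{thm:vGGR1} without the convexity hypothesis. Keeping Diagram \ref{eq:costello_diagram}: the right-hand square is Cartesian, defining $\cM$ with $[\cM]^{vir}=\nu^![\barM(E,d[E])]^{vir}$ by \cite{Man}; the log tangent sequence $0\to T_{Y/E}(\log E_0)\to T_Y(\log E_0)\to \pi_Y^*TE\to 0$, whose kernel is the relative log tangent sheaf of a $\bP^1$-bundle, supplies a compatible triple for the left-hand square in the sense of \cite{Man}, so that Corollary 4.9 of \cite{Man} yields a virtual pullback $u^{!}$ together with the identity $[\barM(Y(\log E_0),\gamma)]^{vir}=u^{!}\nu^{!}[\barM(E,d[E])]^{vir}$ (in the convex case $u$ is smooth and $u^{!}=u^{*}$, recovering Theorem \ref{thm:vGGR1}). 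The cycle-support consequence is the same regardless of convexity: if $[\barM(E,d[E])]^{vir}$ is represented by a cycle supported on a closed $W\subseteq\barM(E,d[E])$, then $[\barM(Y(\log E_0),\gamma)]^{vir}$ is represented by one supported on $w^{-1}(W)$, $w=v\circ u$. I would then run the analogous comparison one level up along $\pi_{\cL_Y}\colon\cL_Y\to Y$ with its divisorial log structure pulled back from $E_0$, concluding that $[\barM_V]^{vir}$ is supported on $\psi^{-1}w^{-1}(W)$ (in the cases that occur the fiber coefficient $k$ of $\beta_V$ along $\pi_{\cL_Y}$ is $0$; if $k>0$ one first caps with $k$ copies of the fiber point class and uses the product behaviour of \cite{Man}). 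Alternatively, one could package both steps into a single comparison for the tower $\cL_Y\to E$.

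It then remains to use the geometry of degree-$d$ maps to the elliptic curve with $d\geq 1$. Because $TE\cong\cO_E$, the relative obstruction sheaf of $\barM_{g,n}(E,d[E])$ over the Costello stack of log pre-stable curves (\cite{Cos}) is the dual Hodge bundle, so $[\barM_{g,n}(E,d[E])]^{vir}$ is represented by a cycle supported on the closed locus $W$ of stable maps $f\colon C\to E$ whose non-contracted part is an unramified genus-$1$ ``core'' and whose $n$ markings all lie on this core together with the contracted trees attached to it — this is the genus-insensitive elliptic-curve support lemma of \cite{vGGR} (Riemann--Hurwitz forbids non-constant genus-$0$ components and bounds the ramification). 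On $W$ the evaluation map to $E^n$ has image of dimension at most $1$: markings on contracted trees all map to a single point of $E$ moving in the one-parameter family of translates/isogenies, and markings on the core move rigidly with it. Tracing this through $\psi$ and $w$, the $E_0$-component of $ev_{\cL_E}$ restricted to $\psi^{-1}w^{-1}(W)$ has image of dimension $\leq 1$, so $ev_{\cL_E}\bigl(\psi^{-1}w^{-1}(W)\bigr)\subseteq(E_0\times\bP^1)^{r_V}$ has dimension $\leq 1+r_V$. Comparing with $\vdim\barM_V=\int_{\beta_V}c_1\bigl(T_{\cL_Y}(\log\cL_E)\bigr)+n_V+r_V$, and using $c_1\bigl(T_{\cL_Y}(\log\cL_E)\bigr)=\pi_{\cL_Y}^*\pi_Y^*c_1(TE)+(\text{strictly positive fiber terms})$ with $c_1(TE)=0$ and $d\geq 1$, one finds $\vdim\barM_V>1+r_V$ in every class that appears; hence a cycle of dimension $\vdim\barM_V$ supported on $\psi^{-1}w^{-1}(W)$ pushes forward to $0$, giving $(ev_{\cL_E})_*[\barM_V]^{vir}=0$.

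The main obstacle is the middle step: extending the Costello-type comparison of \cite{vGGR} past the convex range. One must verify that the relative log tangent sequence of a $\bP^1$-bundle genuinely furnishes a compatible triple in the sense of \cite{Man} — so that $u^{!}$ exists, is functorial under composing the two bundle projections, and is compatible with the divisorial log structures along $E_0$ and $\cL_E$ — and that ``$u^{!}\nu^{!}$'' transports cycle-support to preimages. A secondary technical point is the $k>0$ case flagged above, where the $\pi_{\cL_Y}$-fiber enters $\beta_V$; the closing numerical inequality $\vdim\barM_V>1+r_V$ then also needs re-examining, but it can be settled class-by-class from the description of $NE(\cL_Y)$ together with the constraint $\sum_e w_e=\beta\cdot E$ and $(\pi_Y)_*\gamma=d[E]$ with $d\geq 1$.
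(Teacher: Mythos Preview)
Your proposal is correct and follows essentially the same approach as the paper: both extend Theorem 4.1 of \cite{vGGR} past the convexity hypothesis by replacing the smooth pullback $u^*$ with Manolache's virtual pullback $u^{!}$ (via Corollary 4.9 of \cite{Man}) to obtain the support-transport statement, and then feed in the elliptic-curve support lemma (Lemma 5.1 of \cite{vGGR}) to force the evaluation-pushforward to vanish by a dimension count. The only difference is packaging: you explicitly run the Costello-type comparison twice along the tower $\cL_Y\to Y\to E$ and spell out the final inequality $\vdim\barM_V>1+r_V$, whereas the paper does the $Y\to E$ step explicitly and then compresses both the $\cL_Y\to Y$ lift and the dimension count into the citation of Lemma 5.1 and Proposition 5.3 of \cite{vGGR}.
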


Recall the definitions of $r_V, n_V$ from Section \ref{sec:degeneration_formula}. By Lemma \ref{lem:deg3}, the curve classes of $Y$-vertices are multiples $kB$ of the $\bP^1$-fiber $B$ of $Y \rightarrow E$, or $kB + F$ where $F$ is a $\bP^1$-fiber of $\cL_Y \rightarrow Y$.  Therefore, the relative evaluation map $ev_{\cL_E}: \barM(\cL_Y(\log \cL_E), \beta_V) \rightarrow (E\times \bP^1)^{r_V}$ factors as,

\begin{align*}
 &ev_{\cL_E}: \barM(\cL_Y(\log \cL_E), \beta_V) \xrightarrow{\pi_{\cL_Y}} \barM(Y(\log E_0), (\pi_{\cL_Y})_*\beta_V) \xrightarrow{p} \barM(E, (\pi_Y \circ \pi_{\cL_Y})_*\beta_V) \\ &\xrightarrow{ev} E \times 0 \hookrightarrow (E\times \bP^1)^{r_V}   
\end{align*}
The map $p$ forgets the log structure, composes with $\pi_Y$, and stabilizes. 

Next, we show that a $Y$-vertex $V$ has at most a single edge, i.e. $r_V \leq 1$.

\begin{lemma}
\label{lem:deg4}

    Let $V$ be a $Y$-vertex with $g_V \geq 0$. Suppose that $r_V > 1$. Denote $[pt_{\cL_Y}] \in A^3(\cL_Y)$ to be the Poincar\'e dual of a point in $\cL_Y$. If the curve class of $V$ is 1) $kB$, then $(ev_{\cL_E})_*([\barM_V]^{vir}) = 0$, or 2) $kB + F$, then $(ev_{\cL_E})_*([pt_{\cL_Y}] \cap [\barM_V]^{vir}) = 0$, 

    \begin{proof}
        In case 1), suppose the curve class is $kB$. The integer $k$ is also the weight of the edge connected to $V$. By Lemma 5.4, \cite{vGGR}, we have $n_V = 0$. The virtual dimension of $\barM_V = \barM_{g_V, r_V}(\cL_Y(\log \cL_D), kB)$ is $r_V$. Since the evaluation map $ev_{\cL_E}$ factors through $E \times 0 \hookrightarrow E \times \bP^1$, then if $r_V > 1$, we have $(ev_{\cL_E})_*([\barM_V]^{vir}) = 0$ by degree vanishing.

        In case 2), the vertex $V$ must contain the interior marked point, i.e. $n_V = 1.$ The virtual dimension of $\barM_V = \barM_{g_V, 1+r_V}(\cL_Y(\log \cL_D), kB+F)$ is $3+r_V.$ The evaluation map factors through $E \times 0 \hookrightarrow E \times \bP^1$. Hence, if $r_V > 1$, we have $(ev_{\cL_E})_*([pt_{\cL_Y}] \cap [\barM_V]^{vir}) = 0$ by degree vanishing.
    \end{proof}

\end{lemma}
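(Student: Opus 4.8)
The approach is a dimension-vanishing argument built on the factorization of the relative evaluation map $ev_{\cL_E}$ recorded just above the lemma. The key preliminary observation is that for a $Y$-vertex $V$ the curve class $\beta_V$ is $kB$ or $kB+F$, and in both cases $(\pi_Y\circ\pi_{\cL_Y})_*\beta_V=0$, so every stable log map parametrized by $\barM_V$ is contracted by $\pi_Y\circ\pi_{\cL_Y}$ to a point $e\in E$ and its image in $\cL_Y$ lies in the single Hirzebruch fiber $\FF_1=\pi_{\cL_Y}^{-1}(B_e)$. The first step I would carry out is to pin down the relative markings: they lie on the curve's intersection with $\cL_E=\pi_{\cL_Y}^{-1}(E_0)$, which is contained in $\pi_{\cL_Y}^{-1}(E_0\cap B_e)$, a single $\pi_{\cL_Y}$-fiber; whenever every non-contracted component of the curve is a (multi)section over $B_e$, that intersection collapses to the single point $(e,0)$ where the section class $kB$ meets $\cL_E$. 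On such loci $ev_{\cL_E}$ then factors through the $1$-dimensional family $\{(e,0)^{r_V}:e\in E\}\subset(E\times\bP^1)^{r_V}$.

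For case (1), $\beta_V=kB$: by Lemma 5.4 of \cite{vGGR} one has $n_V=0$ (though the vanishing below is insensitive to interior markings), every component of the curve is a multisection over some $B_e$, and so $\barM_V=\barM_{g_V,r_V}(\cL_Y(\log\cL_E),kB)$ has virtual dimension $r_V$ while $ev_{\cL_E}$ factors through a $1$-dimensional locus. Since $r_V>1$, the cycle $(ev_{\cL_E})_*[\barM_V]^{vir}$ has dimension $r_V$ but is supported on a space of dimension $1$, hence vanishes.

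For case (2), $\beta_V=kB+F$: here $V$ carries the single interior marked point, $\barM_V=\barM_{g_V,1+r_V}(\cL_Y(\log\cL_E),kB+F)$ has virtual dimension $3+r_V$, and capping with the codimension-$3$ class $ev^*[pt_{\cL_Y}]$ leaves a class of dimension $r_V$. The step I expect to require the most care is that a class-$(kB+F)$ curve has an extra $\pi_{\cL_Y}$-fiber component $F_y$: if $y$ happens to lie on $E_0$, then $F_y\subset\cL_E$ entirely, a relative point on it may have an arbitrary $\bP^1$-coordinate, and the factorization above breaks. I would dispose of this by working on the support of the capped class, i.e. on $ev^{-1}(z)$ for a general $z\in\cL_Y$: passing through a general $z$ forces the fiber component to be $F_{\pi_{\cL_Y}(z)}$, based at a general point of $Y$ and hence off $E_0$, so it is disjoint from $\cL_E$; all relative markings then lie on the class-$kB$ part and map to the single point $(e_z,0)$, where $e_z=\pi_Y(\pi_{\cL_Y}(z))$ is fixed. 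Consequently $(ev_{\cL_E})_*(ev^*[pt_{\cL_Y}]\cap[\barM_V]^{vir})$ is a cycle of dimension $r_V\geq2$ supported on a point, hence zero. The only genuinely delicate point is the exclusion of these degenerate fiber configurations once the general interior-point constraint is imposed; everything else is dimension bookkeeping.
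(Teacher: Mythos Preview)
Your proposal is correct and uses the same dimension-vanishing mechanism as the paper. For case~(1) your argument is identical to the paper's: virtual dimension $r_V$, evaluation factoring through a $1$-dimensional locus, hence vanishing for $r_V>1$. For case~(2) the paper simply invokes the factorization of $ev_{\cL_E}$ through $E\times 0$ that is asserted in the paragraph just above the lemma and repeats the dimension count; you instead re-justify that factorization on the support of the point-capped class by choosing a generic interior point $z$. That extra care is reasonable---the blanket factorization through $E\times 0$ is not literally true for every curve of class $kB+F$, since a component meeting $\cL_E$ away from the zero-section can push the $\bP^1$-coordinate off $0$, which is exactly the concern you flag.

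One small refinement to your case~(2) argument: for $k\geq 2$ the curve of class $kB+F$ need not split off a pure fiber component $F$; since $B\cdot(kB+F)<0$ it must contain $B$, but the residual may be an irreducible $(B+F)$-curve through $z$ rather than $(k-1)B+F$. Such a $(B+F)$-component meets $\cL_E$ at a point $(e_z,t_C)$ with $t_C$ moving in a one-parameter pencil, so the image of $ev_{\cL_E}$ on $ev^{-1}(z)$ is a priori $1$-dimensional rather than the single point $(e_z,0)$ you claim. This does not affect the conclusion: you only need the image to have dimension strictly less than $r_V$, and $r_V\geq 2$.
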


\begin{remark}
    Since $\dim X = 2$, the virtual dimension of $\barM(\cL_Y)$ does not depend on genus $g_V$. Thus, the degree vanishing of virtual classes in Lemma \ref{lem:deg4} holds for all $g_V \geq 0$ .
\end{remark}

\begin{proof}[Proof of Theorem \ref{thm:deg-graphs}]
    By Lemmas \ref{lem:deg2}, \ref{lem:deg3}, \ref{lem:deg4}, bipartite graphs are of the form in Figure \ref{fig:deg_graph} for all genus $g \geq 0$. Now, suppose by contradiction that $w_{e_1} \geq 2$. Since $B \cdot (w_{e_1}B + F) < 0$, the curve class $\beta_V$ contains a fiber $B$ of $Y \rightarrow E$, and is reducible. Then, the evaluation map $ev_{\cL_E}$ of $\barM_V$ factors through a point $pt_E \hookrightarrow E\times 0 \hookrightarrow E \times \bP^1$. \footnote{Explicitly, if $[pt_{\cL_Y}] \in A_0(\cL_Y)$ represents the fixed point that curves $B+F$ pass through, then $pt_E = (\pi_Y)_* (\pi_{\cL_Y})_*([pt_{\cL_Y}]) \in A_0(E)$.} Hence, $(ev_{\cL_E})_*[\barM_V]^{vir} = 0$ for all $g \geq 0$ by degree vanishing. Thus, the edge weights of $\Gamma$ are $w_{e_1} = 1$ and $w_{e_2} = \beta\cdot E - 1$: 
\end{proof}

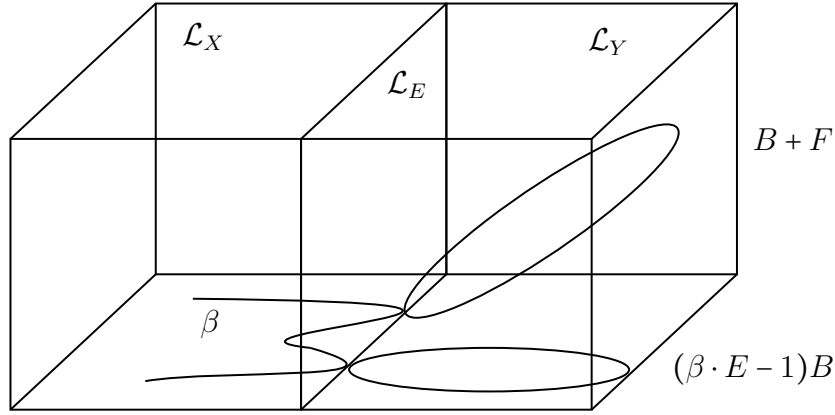
\begin{figure}[h!]
\centering

\tikzset{every picture/.style={line width=0.75pt}} 

\begin{tikzpicture}[x=0.90pt,y=0.90pt,yscale=-1,xscale=1]

\draw   (143.67,217.44) -- (265.8,217.44) -- (265.8,331.33) -- (143.67,331.33) -- cycle ;
\draw   (204.73,160.5) -- (326.87,160.5) -- (326.87,274.39) -- (204.73,274.39) -- cycle ;
\draw   (265.8,217.44) -- (387.93,217.44) -- (387.93,331.33) -- (265.8,331.33) -- cycle ;
\draw   (326.87,160.5) -- (449,160.5) -- (449,274.39) -- (326.87,274.39) -- cycle ;
\draw    (143.67,217.44) -- (204.73,160.5) ;
\draw    (265.8,217.44) -- (326.87,160.5) ;
\draw    (143.67,331.33) -- (204.73,274.39) ;
\draw    (265.8,331.33) -- (326.87,274.39) ;
\draw    (387.93,217.44) -- (449,160.5) ;
\draw    (387.93,331.33) -- (449,274.39) ;
\draw    (200.33,319.33) .. controls (231.67,314) and (320.33,320) .. (269.67,305.33) ;
\draw    (220.33,284.67) .. controls (419,286.67) and (211.67,299.33) .. (269.67,305.33) ;
\draw   (286,314.5) .. controls (286,309.44) and (312.34,305.33) .. (344.83,305.33) .. controls (377.33,305.33) and (403.67,309.44) .. (403.67,314.5) .. controls (403.67,319.56) and (377.33,323.67) .. (344.83,323.67) .. controls (312.34,323.67) and (286,319.56) .. (286,314.5) -- cycle ;
\draw   (343.39,252) .. controls (372.43,229.54) and (406.5,211.33) .. (419.48,211.33) .. controls (432.47,211.33) and (419.46,229.54) .. (390.42,252) .. controls (361.38,274.46) and (327.31,292.67) .. (314.33,292.67) .. controls (301.34,292.67) and (314.35,274.46) .. (343.39,252) -- cycle ;

\draw (214.67,167.73) node [anchor=north west][inner sep=0.75pt]    {$\mathcal{L}_{X}$};
\draw (300.67,187.73) node [anchor=north west][inner sep=0.75pt]    {$\mathcal{L}_{E}$};
\draw (385.33,169.73) node [anchor=north west][inner sep=0.75pt]    {$\mathcal{L}_{Y}$};
\draw (420.47,306.26) node [anchor=north west][inner sep=0.75pt]    {$( \beta \cdot E-1) B$};
\draw (454.47,211.59) node [anchor=north west][inner sep=0.75pt]    {$B+F$};
\draw (222.33,288.07) node [anchor=north west][inner sep=0.75pt]    {$\beta $};

\end{tikzpicture}

    \caption{Stable log map to the central fiber $\cL_0 = \cL_X \sqcup_{\cL_E} \cL_Y$ represented by a bipartite graph in Theorem \ref{thm:deg-graphs}.} 
    \label{fig:curve-splitting}
\end{figure}

\subsection{Invariants associated to $\Gamma$}

Let $\beta \in H_2(X,\mathbb{Z})$ be a curve class, and define $e := \beta\cdot E$. By Theorem \ref{thm:deg-graphs}, the degeneration formula gives the following equality of virtual classes,

\begin{equation}
\label{eq:splitting_virtual}
\begin{split}
  [\barM(\cL_0)]^{vir} &= \sum_{\substack{\Gamma \in \Gamma(g,n,\beta), \\g_A + g_B + g_C = g}} (e-1)F_* \Phi^* \Delta^{!}\Bigl([\barM_{g_A, 2}(\cL_X(\log \cL_E), \beta)]^{vir} \times \\ &[\barM_{g_B, 1}(\cL_Y(\log \cL_E), (e-1)B)]^{vir} \times [\barM_{g_C, 2}(\cL_Y(\log \cL_E), B+F)]^{vir}\Bigr) 
\end{split}
\end{equation}
(see Section \ref{sec:degeneration_formula} for the definition of $F, \Phi,$ and $\Delta$), and Figure \ref{fig:curve-splitting} for curves represented by $\Gamma$.

In forming log invariants from the log smooth family $\cL \rightarrow \mathbb{A}^1$, we cap $[\barM(\cL)]^{vir}$ with an incidence condition $\alpha \in A_1(\cL)$, defined as $\alpha_t := [pt] \in H^6(Z, \mathbb{Z})$ for $t \neq 0$, and $\alpha_0 := [pt] \in H^6(\cL_Y, \mathbb{Z})$. If $i_t: \cL_t \hookrightarrow \cL$ is the inclusion for $t \neq 0$, then,

\[
i_t^{!} \alpha \cap [\barM(\cL)]^{vir} = N_{g,1}(Z, \beta+h)
\]
By log smooth invariance of logarithmic Gromov-Witten theory (Appendix A, \cite{MR}), we have,

\[
N_{g,1}(Z, \beta+h) = \alpha_0 \cap [\barM(\cL_0)]^{vir}
\]
We use the degeneration formula to calculate $\alpha_0 \cap [\barM(\cL_0)]^{vir}$ by applying $\alpha_0$ to the right side of Equation \ref{eq:splitting_virtual}.

\subsubsection{Invariants of Vertex $V_1$}

\label{sec:VertexA}

For vertex $V_1$ in Theorem \ref{thm:deg-graphs}, stable log maps have 0 interior marked points and 2 relative marked points, i.e. $n_{V_1} = 0$ and $r_{V_1} = 2$. To vertex $V_1$, we associate the moduli space $\barM_{V_1} := \barM_{g_{V_1},2}(\cL_X(\log \cL_E), \beta)$ of genus-$g_{V_1}$, basic stable log maps to $\cL_X(\log \cL_E)$ with 2 relative contact points in curve class $\beta$. The virtual dimension of $\barM_{V_1}$ is 2. 

Define $\gamma_{\cL_E} := [pt_E \times \bP^1] \in A_1(\cL_E)$, where $pt_E \hookrightarrow E$ is a point. This class will serve as a relative contact condition in the degeneration formula for maps in $\barM_{V_1}$. Since $[pt_E \times \bP^1]$ intersected with $[E \times pt_{\bP^1}] = 1$, the Poincar\'e dual of $\gamma_{\cL_E}$ is the class $\gamma_{\cL_E}^{\vee} := [E \times pt_{\bP^1}] \in A_1(\cL_E)$. Later on, the class $\gamma_{\cL_E}^{\vee}$ will serve as a relative contact condition for stable log maps in $\barM_{V_3}$.

We have the following Cartesian diagram by restricting the evaluation map to $E \times 0$, 

\begin{center}
  \begin{tikzcd}
   \barM_{g_{V_1},2}(X(\log E), \beta)_{(1, e-1)} \arrow[r] \arrow[d, "ev"] & \barM_{V_1} \arrow[d, "ev"] \\
    E \times 0 \arrow[r, hook, "i"] & E \times \bP^1
\end{tikzcd}    
\end{center}
where the fiber product $\barM_{g_{V_1},2}(X(\log E), \beta)_{(1, e-1)}$ denotes the moduli space of genus-$g_{V_1}$, basic stable log maps to $X(\log E)$ in curve class $\beta$, with one prescribed contact point to $E$ of order 1, and a non-prescribed contact point to $E$ of order $e-1.$ We have $\vdim \barM_{g_{V_1},2}(X(\log E), \beta)_{(1, e-1)} = g_{V_1} + 1.$ 

Let $\mathcal{U} \xrightarrow{ft} \barM_{g_{V_1},2}(X(\log E), \beta)_{(1, e-1)}$ be the universal curve of $\barM_{g_{V_1},2}(X(\log E), \beta)_{(1, e-1)}$ with evaluation map $\mathcal{U} \xrightarrow{ev} X$. Since the normal bundle $N_{X \times 0/ X \times \bP^1}$ is isomorphic to $\cO_X$, the virtual classes of $\barM_{g_{V_1},2}(X(\log E), \beta)_{(1, e-1)}$ and $\barM_{V_1}$ differ by $e(R^1 ft_* ev^* N_{X \times 0/ X \times \bP^1}) = (-1)^g \lambda_g$. Hence, we have,

\[
i^{!}[\barM_{V_1}]^{vir} = (-1)^{g_{V_1}} \lambda_{g_{V_1}} \cap [\barM_{g_{V_1},2}(X(\log E), \beta)_{(1, e-1)}]^{vir}
\]
To vertex $V_1$, we associate the class,

\[
\gamma_{\cL_E} \cap [\barM_{V_1}]^{vir}
\]
We calculate its Gysin pullback,

\[
i^{!}(\gamma_{\cL_E} \cap [\barM_{V_1}]^{vir}) = i^*\gamma_{\cL_E} \cap i^{!}[\barM_{V_1}]^{vir} = [pt_E] \cap (-1)^{g_{V_1}} \lambda_{g_{V_1}} \cap [\barM_{g_{V_1},(1, e-1)}(X(\log E), \beta)]^{vir}
\]
since $i^* \gamma_{\cL_E} = \gamma_{\cL_E} \cdot (E \times 0) = [pt_E] \in A_0(E) = A^1(E).$ 

The invariant associated to vertex $V_1$ is,

\begin{equation}
\label{eq:vertexA_invariant}
    R_{g_{V_1}, (1,e-1)}(X(\log E), \beta) := \int_{[\barM_{g_{V_1},2}(X(\log E), \beta)_{(1, e-1)}]^{vir}}(-1)^{g_{V_1}}\lambda_{g_{V_1}}ev^*([pt_E])
\end{equation}
where $[pt_E] \in A^1(E)$. This is the same invariant defined in Section 4, \cite{GRZ}, and can be computed via $q$-refined tropical curve counting \cite{Gra}.

\subsubsection{Invariants of Vertex $V_2$} 
\label{sec:VertexB}

For vertex $V_2$, by Lemma 5.4 of \cite{vGGR}, stable log maps have 0 interior marked points and 1 relative marked point, i.e. $n_{V_2} = 0$ and $r_{V_2} = 1$. To vertex $V_2$, we associate the moduli space $\barM_{V_2} := \barM_{g_{V_2}}(\cL_Y(\log \cL_E), (e-1)B)$ of genus-$g_{V_2}$, basic stable log maps to $\cL_Y(\log \cL_D)$ in curve class $(e-1)B$ with 1 relative contact point of maximal tangency order $(e-1).$ The virtual dimension of $\barM_{V_2}$ is $1$. The evaluation map $ev_{\cL_E}$ factors through $E \times 0 \hookrightarrow E \times \bP^1$ since the curve class is a multiple of the fiber class $B$. Recall that $\cL_Y$ restricted to a fiber of $Y \rightarrow E$ is $\FF_1 = \bP(\cO_{\bP^1}(-1) \oplus \cO_{\bP^1})$, since $B \cdot -E_{\infty} = -1$ \footnote{
Recall that the intersection theory of $\FF_1 = \bP(\cO_{\bP^1}(-1) \oplus \cO_{\bP^1})$ is given by the ring $\mathbb{Z}[B,F]/\langle B^2 = -1, B \cdot F = 1, F \cdot F = 0\rangle$, where $B, F \in H_2(\FF_1, \mathbb{Z})$. Effective curve classes $nB + mF \in H_2(\FF_1, \mathbb{Z})$ satisfy $m, n \geq 0.$ The toric boundary of $\FF_1$ as a toric variety is the anti-canonical class $2B+3F$.}.

We have the following Cartesian diagram,

\begin{center}
  \begin{tikzcd}
   \barM_{g_{V_2}}(\FF_1(\log F), (e-1)B) \arrow[r] \arrow[d, "ev"] & \barM_{V_2} \arrow[d, "ev"] \\
    pt_E \arrow[r, hook, "i"] & E \times 0
\end{tikzcd}    
\end{center}
where $pt_E \in E \times 0$ is a point, and the fiber product $\barM_{g_{V_2}}(\FF_1(\log F), (e-1)B)$ denotes the moduli space of genus-$g_{V_2}$, basic stable log maps to $\FF_1(\log F)$ in curve class $(e-1)B$ with 1 relative contact point of maximal tangency order $(e-1).$ We have $\vdim \barM_{g_{V_2}}(\FF_1(\log F), (e-1)B) = g_{V_2}$. 

Let $\mathcal{U} \xrightarrow{ft} \barM(\FF_1(\log F), (e-1)B)$ be the universal curve of $\barM(\FF_1(\log F), (e-1)B)$, with evaluation map $\mathcal{U} \xrightarrow{ev} \FF_1$. The obstruction theories defining $[\barM_{V_2}]^{vir}$ and \\$[\barM(\FF_1(\log F), (e-1)B)]^{vir}$ differ by $e(R^1 ft_* ev^* N_{\FF_1/\cL_Y})$. Since $N_{\FF_1/\cL_Y} \cong \cO_{\FF_1}$, we have the relation,

\[
i^{!}[\barM_{V_2}]^{vir} = (-1)^{g_{V_2}} \lambda_{g_{V_2}} \cap [\barM_{g_{V_2},1}(\FF_1(\log F), (e-1)B)]^{vir}
\]
where $i^{!}$ denotes the Gysin pull back. The invariant associated to vertex $V_2$ is,

\begin{equation}
\label{eq:vertexB_invariant}
    R_{g_{V_2}}(\FF_1(\log F), (e-1)B) := \int_{[\barM_{g_{V_2}}(\FF_1(\log F), (e-1)B)]^{vir}}(-1)^{g_{V_2}}\lambda_{g_{V_2}}
\end{equation}
For virtual dimension reasons, there are no relative contact conditions with $\cL_E$ coming from the degeneration formula. Since $B \cong \bP^1$ has normal bundle $N_{B/\FF_1} \cong \cO_{\bP^1}(-1)$, any genus-$g_{V_2}$ stable log map in $\barM_{g_{V_2}}(\FF_1(\log F), (e-1)B)$ factors through $B$. Let $\barM_{g_{V_2}}(\bP^1(\log \infty), (e-1)[\bP^1])$ denote the moduli space of genus-$g_{V_2}$ basic stable log maps to $\bP^1(\log \infty)$ in curve class $(e-1)[\bP^1]$ with 1 relative contact point of maximal tangency. We have the identification of moduli spaces,

\[
\barM_{g_{V_2}}(\FF_1(\log F), (e-1)B) = \barM_{g_{V_2}}(\bP^1(\log \infty), (e-1)[\bP^1])
\]
Let $\mathcal{U} \xrightarrow{ft} \barM_{g_{V_2}}(\bP^1(\log \infty), (e-1)[\bP^1])$ be the universal curve of $\barM_{g_{V_2}}(\bP^1(\log \infty), (e-1)[\bP^1])$, with  evaluation map $\mathcal{U} \xrightarrow{ev} \bP^1 \cong B$. The obstruction theories defining \\$[\barM_{g_{V_2}}(\FF_1(\log F), (e-1)B)]^{vir}$ and $[\barM_{g_{V_2}}(\bP^1(\log \infty), (e-1)[\bP^1])]^{vir}$ differ by \\$e(R^1\pi_* f^* N_{B/\FF_1})$. The invariant in \ref{eq:vertexB_invariant} is equivalently,

\begin{equation}
\label{eq:vertexB_invariant_also}
\int_{[\barM_{g_{V_2}}(\bP^1(\log \infty), (e-1)[\bP^1]]^{vir})}e(R^1\pi_* f^* (\cO_{\bP^1} \oplus \cO_{\bP^1}(-1)))
\end{equation}
where are computed by Theorem 5.1, \cite{BP}; the genus-$g_{V_2}$ invariant is the coefficient of $\hbar^{2g_{V_2}}$ in the expression,

\[
\frac{(-1)^e}{(e-1)}\frac{i \hbar}{q^{(e-1)i\hbar/2} - q^{-(e-1)i\hbar/2}} = \frac{\hbar}{2}\csc \frac{(e-1)\hbar}{2}
\]
where $q = e^{i\hbar}$. The first few terms of $\frac{\hbar}{2}\csc \frac{(e-1)\hbar}{2}$ are $\frac{(-1)^e}{(e-1)^2} + \frac{(-1)^e}{24}\hbar^2 + \frac{7(-1)^e(e-1)^2}{5760}\hbar^4 + \ldots$.

\subsubsection{Invariants of Vertex $V_3$}

\label{sec:VertexC}

In Theorem \ref{thm:deg-graphs}, Vertex $V_3$ must contain the interior marked point and has 1 relative marked point, i.e. $n_{V_3} = r_{V_3}  = 1$. To Vertex $V_3$, we associate the moduli space $\barM_{V_3}  := \barM_{g_{V_3} ,2}(\cL_Y(\log \cL_E), B+F)$ of genus-$g_C$, basic stable log maps to $\cL_Y(\log \cL_E)$ in curve class $B+F$ with 1 fixed interior point and 1 relative contact point of order 1. We have $\vdim \barM_{V_3} = 4$. Recall that we have the evaluation maps $ev_{\cL_Y}:\barM_{V_3} \rightarrow \cL_Y$ and $ev_{\cL_E}: \barM_{V_3} \rightarrow \cL_E$. The evaluation map $ev_{\cL_E}$ factors through $E \times 0 \hookrightarrow E \times \bP^1$.

For Vertex $V_3$, we compute the invariant,

\begin{equation}
\label{eq:vertexC_locus}
  [\barM_{V_3}]^{vir} \cap ev_{\cL_Y}^*[pt_{\cL_Y}] \cap ev_{\cL_E}^*\gamma_{\cL_E}^{\vee}  
\end{equation}
where $\gamma_{\cL_E}^{\vee}$ was described in Section \ref{sec:VertexA}. The point constraint $[pt_{\cL_Y}]$ comes from the pullback of the incidence condition $i_{0}^{!}\alpha$ to the central fiber $\cL_0.$ The relative incidence condition $\gamma_{\cL_E}^{\vee} = [E \times pt_{\bP^1}] \in A_1(\cL_E)$ comes from the degeneration formula, as $[E \times pt_{\bP^1}] \cdot [pt_E \times \bP^1] = 1$.

We first have the following lemma showing that stable log maps in the class $B+F$ must have irreducible image,

\begin{lemma}
\label{lem:irreducible}
If the curve class $B+F$ of stable log maps in the locus $[\barM_C]^{vir} \cap [pt_{\cL_Y}] \cap [E \times pt_{\bP^1}]$ is reducible, then $(ev_{\cL_Y})_*([\barM_C]^{vir} \cap [pt_{\cL_Y}] \cap [E \times pt_{\bP^1}]) = 0.$
\begin{proof}
     If the class $B+F$ is reducible, then the evaluation map factors through a point $pt \hookrightarrow E \times 0 \hookrightarrow E \times \bP^1$, which is given by $pt = \pi_{Y*}\pi_{\cL_Y*} [pt_{\cL_Y}] \in A_0(E).$ Since $[\barM_C]^{vir} \cap ev_{\cL_Y}^*[pt_{\cL_Y}]$ is of degree 1, we have $(ev_{\cL_E})_*([\barM_C]^{vir} \cap ev_{\cL_Y}^*[pt_{\cL_Y}]) = 0$. Now, consider $(ev_{\cL_E})_*([\barM_C]^{vir} \cap ev_{\cL_Y}^*[pt_{\cL_Y}] \cap ev_{\cL_E}^*[E \times pt_{\bP^1}])$. By the projection formula, this is $(ev_{\cL_E})_*([\barM_C]^{vir} \cap ev_{\cL_Y}^*[pt_{\cL_Y}]) \cap (ev_{\cL_E})_* ev_{\cL_E}^*[E \times pt_{\bP^1}]$, which vanishes.
 \end{proof}
\end{lemma}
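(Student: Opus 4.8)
The plan is to show that reducibility of the class $B+F$ forces the entire image curve to be contracted to a single point of $E$ by the composite projection $\pi_Y\circ\pi_{\cL_Y}\colon\cL_Y\to E$, so that once the point constraint $ev_{\cL_Y}^*[pt_{\cL_Y}]$ is imposed the relative evaluation map $ev_{\cL_E}$ takes only a single value; a dimension count then kills its pushforward, and the assertion drops out of the projection formula.

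First I would record the possible degenerations of $B+F$. Restricting $\cL_Y$ to a fibre of $\pi_Y\colon Y\to E$ yields $\FF_1=\bP(\cO_{\bP^1}(-1)\oplus\cO_{\bP^1})$, whose effective cone is spanned by $B$ and $F$; moreover $B$ is (the image in $\cL_Y$ of) the fibre class of $Y\to E$, while $F$ is the fibre class of $\cL_Y\to Y$. Hence a connected stable log map $f$ with reducible image in class $B+F$ has image the union of a component carrying class $F$ — a fibre of $\cL_Y\to Y$, so contracted by $\pi_{\cL_Y}$ — a component carrying class $B$, which $\pi_{\cL_Y}$ sends isomorphically onto a fibre of $Y\to E$ and which $\pi_Y$ therefore contracts, together with possibly some contracted rational components. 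Each of these maps to a point under $\pi_Y\circ\pi_{\cL_Y}$, and the image of $f$ is connected, so $\pi_Y\circ\pi_{\cL_Y}\circ f$ is constant.

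Next I would impose the constraints. On the locus $\alpha:=[\barM_C]^{vir}\cap ev_{\cL_Y}^*[pt_{\cL_Y}]$ every curve passes through the fixed point $pt_{\cL_Y}$, so the constant value of $\pi_Y\circ\pi_{\cL_Y}\circ f$ must equal $pt_E:=(\pi_Y)_*(\pi_{\cL_Y})_*[pt_{\cL_Y}]\in A_0(E)$. The relative marked point lands in $\cL_E$, on which $\pi_Y\circ\pi_{\cL_Y}$ restricts to the projection $E\times\bP^1\to E$, so its image has first coordinate $pt_E$; combined with the fact (already established for class $B+F$) that $ev_{\cL_E}$ factors through $E\times 0\hookrightarrow E\times\bP^1$, the relative evaluation map takes on this locus the single value $pt_E\times 0$. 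Since $\vdim\barM_C=4$ and $[pt_{\cL_Y}]\in A^3(\cL_Y)$, the class $\alpha$ has dimension $1$; being supported on a locus that $ev_{\cL_E}$ collapses to a point, $(ev_{\cL_E})_*\alpha$ is a one-dimensional cycle on a point, hence zero.

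Finally, the projection formula gives $(ev_{\cL_E})_*\big(\alpha\cap ev_{\cL_E}^*[E\times pt_{\bP^1}]\big)=(ev_{\cL_E})_*\alpha\cap[E\times pt_{\bP^1}]=0$, which is the claim. The step I expect to need genuine care is the first one: verifying that reducibility really does pin the composite projection to $E$ to a constant, i.e. accounting for all contracted rational components and using that a connected union of fibres of $\pi_Y$ and $\pi_{\cL_Y}$ still maps to a single point. The remaining bookkeeping — which, as in the preceding remark, is insensitive to the genus $g_{V_3}$ because $\dim X=2$ — together with the projection formula is routine.
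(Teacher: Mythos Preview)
Your proof is correct and follows essentially the same approach as the paper: both argue that reducibility forces the relative evaluation map, after imposing the interior point constraint, to factor through the single point $pt_E=(\pi_Y)_*(\pi_{\cL_Y})_*[pt_{\cL_Y}]$, so the pushforward of the one-dimensional class $\alpha$ vanishes by dimension reasons, and the projection formula finishes the argument. Your version is more explicit about why the composite projection to $E$ is constant on reducible curves, which is stated more tersely in the paper.
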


The evaluation map $ev_{\cL_E}$ factors through $E \times 0 \hookrightarrow E \times \bP^1$ (by the predeformability condition). We have the following Cartesian diagram,

\begin{center}
  \begin{tikzcd}
   \barM_{g_{V_3}, 2}(\FF_1(\log F), B+F) \arrow[r] \arrow[d, "ev"] & \barM_{V_3} \arrow[d, "ev"] \\
    pt_E \arrow[r, hook, "j"] & E \times 0
\end{tikzcd}    
\end{center}
where $pt_E \in E\times 0$ is a point, and the fiber product $\barM_{g_{V_3} ,2}(\FF_1(\log F), B+F)$ is the moduli space of genus-$g_{V_3}$, basic stable log maps to $\FF_1(\log F)$ in curve class $B+F$ with 1 fixed interior point and 1 relative contact point of order 1 \footnote{Taking the blow up at a point $\pi: \FF_1 \rightarrow \bP^2$, the class $B+F$ is the pullback $\pi^*H$ of the hyperplane class $H \in H_2(\bP^2, \mathbb{Z})$, and satisfies $(B+F)^2 = 1.$ The toric boundary of $\FF_1$ can also be written as $\partial\FF_1 = 2F + \pi^*H + B$.}. We have $\vdim \barM_{g_{V_3} ,2}(\FF_1(\log F), B+F) = g_{V_3} +3$. We have two evaluation maps \\$ev_{\FF_1}:\barM_{g_{V_3} ,2}(\FF_1(\log F), B+F) \rightarrow \FF_1$ and $ev_{F}:\barM_{g_{V_3} ,2}(\FF_1(\log F), B+F) \rightarrow F \cong \bP^1$. As $N_{\FF_1/ \cL_Y} \cong \cO_{\FF_1}$, the obstruction theories differ by $e(R^1 \pi_* f^* N_{\FF_1/ \cL_Y}) = (-1)^g \lambda_g$. The virtual classes hence are related by,

\[
j^{!}[\barM_{V_3}]^{vir} = (-1)^{g_{V_3}}\lambda_{g_{V_3}} \cap [\barM_{g_{V_3}, 2}(\FF_1(\log F), B+F)]^{vir}
\]
where $j^{!}$ is the Gysin-pullback. We evaluate the degree of \ref{eq:vertexC_locus} by the following local relative Gromov-Witten invariant of $\FF_1$ (see Figure \ref{fig:local_relative_F1}),  
\begin{equation}
    \label{eq:vertexC_invariant}
    R_{g_{V_3} , 2}(\FF_1(\log F), B+F) := \int_{[\overline{\mathcal{M}}_{g_{V_3} ,2}(\FF_1(\log F), B+F)]^{vir}} (-1)^{g_{V_3} }\lambda_{g_{V_3} }ev_{\FF_1}^*([pt_{\FF_1}]) ev_{\bP^1}^*([pt_{\bP^1}])
\end{equation}
where $[pt_{\FF_1}] \in A^{2}(\FF_1)$ is a fixed point in the interior of $\FF_1$ and $[pt_{\bP^1}] \in A^{1}(F)$ is a fixed point on $F \cong \bP^1$. In genus $g_{V_3} = 0$, $R_{g_{V_3} , 2}(\FF_1(\log F), B+F)$ is the number of lines through two points or 1. 

\begin{figure}
    \centering
    
\tikzset{every picture/.style={line width=0.75pt}} 

\begin{tikzpicture}[x=0.85pt,y=0.85pt,yscale=-1,xscale=1]

\draw   (285.8,237.44) -- (407.93,237.44) -- (407.93,351.33) -- (285.8,351.33) -- cycle ;
\draw   (346.87,180.5) -- (469,180.5) -- (469,294.39) -- (346.87,294.39) -- cycle ;
\draw    (285.8,237.44) -- (346.87,180.5) ;
\draw    (285.8,351.33) -- (346.87,294.39) ;
\draw    (407.93,237.44) -- (469,180.5) ;
\draw    (407.93,351.33) -- (469,294.39) ;
\draw [color={rgb, 255:red, 208; green, 2; blue, 27 }  ,draw opacity=1 ]   (286.2,305.94) -- (347.27,249) ;
\draw  [color={rgb, 255:red, 74; green, 144; blue, 226 }  ,draw opacity=1 ] (328.37,198) -- (450.5,198) -- (450.5,311.89) -- (328.37,311.89) -- cycle ;
\draw    (328.37,311.89) .. controls (403.8,299.6) and (425,278.75) .. (441.5,223.75) ;
\draw    (341,262.8) .. controls (403.13,255.66) and (444,186.75) .. (441.5,223.75) ;
\draw    (328.37,311.89) .. controls (357.4,286) and (308.2,265.2) .. (341,262.8) ;
\draw [color={rgb, 255:red, 74; green, 144; blue, 226 }  ,draw opacity=1 ]   (341,198.25) -- (341.5,310.75) ;
\draw [color={rgb, 255:red, 74; green, 144; blue, 226 }  ,draw opacity=1 ]   (357.5,198) -- (358,311.25) ;
\draw [color={rgb, 255:red, 74; green, 144; blue, 226 }  ,draw opacity=1 ]   (371,198) -- (372,310.25) ;
\draw [color={rgb, 255:red, 74; green, 144; blue, 226 }  ,draw opacity=1 ]   (385.93,198.82) -- (386.93,311.07) ;
\draw [color={rgb, 255:red, 74; green, 144; blue, 226 }  ,draw opacity=1 ]   (399.5,199) -- (400,312.25) ;
\draw [color={rgb, 255:red, 74; green, 144; blue, 226 }  ,draw opacity=1 ]   (414.43,199.32) -- (415.43,311.57) ;
\draw [color={rgb, 255:red, 74; green, 144; blue, 226 }  ,draw opacity=1 ]   (427.43,198.82) -- (428.43,311.07) ;
\draw [color={rgb, 255:red, 74; green, 144; blue, 226 }  ,draw opacity=1 ]   (440.43,198.82) -- (441.43,311.07) ;

\draw (399.33,153.73) node [anchor=north west][inner sep=0.75pt]    {$\mathcal{L}_{Y}$};
\draw (474.47,229.59) node [anchor=north west][inner sep=0.75pt]    {$B+F$};
\draw (250.8,289.6) node [anchor=north west][inner sep=0.75pt]  [color={rgb, 255:red, 208; green, 2; blue, 27 }  ,opacity=1 ]  {$\gamma _{\mathcal{L}_{E}}^{\lor }$};
\draw (328.03,265) node  [color={rgb, 255:red, 208; green, 2; blue, 27 }  ,opacity=1 ]  {$\bullet$};
\draw (415.03,244.5) node  [color={rgb, 255:red, 65; green, 117; blue, 5 }  ,opacity=1 ]  {$\bullet$};
\draw (302.5,177.4) node [anchor=north west][inner sep=0.75pt]  [font=\small]  {$\mathbb{\textcolor[rgb]{0.29,0.41,0.89}{F}}\textcolor[rgb]{0.29,0.41,0.89}{_{1}}$};
\draw (291.33,242.9) node [anchor=north west][inner sep=0.75pt]    {$\mathcal{L}_{E}$};

\end{tikzpicture}

    \caption{A local relative invariant of $\mathbb{F}_1$ (blue) in curve class $B+F$ passing through 1 fixed point (red) on a fiber $F \subset \mathbb{F}_1$ coming from the incidence condition $\gamma_{\cL_E}^{\vee}$ and an interior fixed point (green).}
    \label{fig:local_relative_F1}
\end{figure}
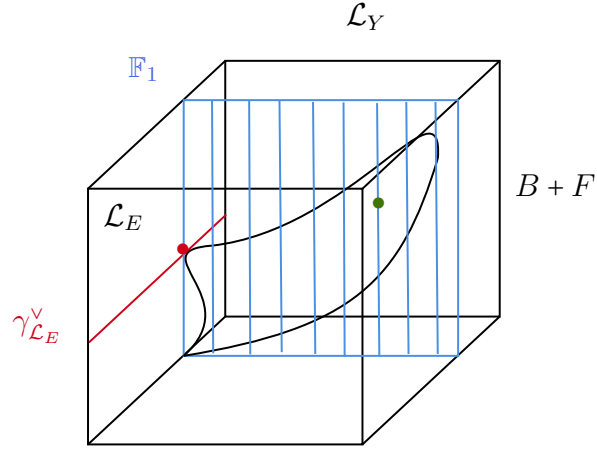

We show that $R_{g_C,2}(\FF_1(\log F), B+F)$ can be computed by the relationship between $q$-refined tropical curve counting and logarithmic Gromov-Witten theory of toric surfaces \cite{Bou} in the following,

\begin{proposition}
\label{prop:vertexC}
We have,

\[
\sum_{g \geq 0}R_{g, 2}(\FF_1(\log F), B+F) \hbar^{2g} = (-i)(q^{\frac{1}{2}} - q^{\frac{-1}{2}})
\]
where $q = e^{i\hbar}$. The first few terms on the right are $1 - \frac{1}{24}\hbar^2 + \frac{1}{1920}\hbar^4 - \frac{1}{322560}\hbar^6+\ldots$.
\end{proposition}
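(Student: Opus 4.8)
\emph{Proof proposal.} The plan is to reduce the computation of $R_{g,2}(\FF_1(\log F), B+F)$ to the corresponding logarithmic invariant of $\bP^2$ with its full toric boundary, which is computed by $q$-refined tropical curve counting via Bousseau's correspondence \cite{Bou}. Write $\pi\colon\FF_1 \to \bP^2$ for the blow up of a toric fixed point, so that $B+F = \pi^*H$ with $H$ the hyperplane class. I would introduce four moduli spaces: $A := \barM_{g,4}(\FF_1(\log\partial\FF_1), \pi^*H)$ and $B := \barM_{g,4}(\bP^2(\log\partial\bP^2), H)$, each with one interior marked point and three marked points mapping to distinct toric divisors with contact order $1$; and $C := \barM_{g,4}(\FF_1(\log F),\pi^*H)$, $D := \barM_{g,2}(\FF_1(\log F), B+F)$, in which only the single fiber $F$ carries a divisorial log structure. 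The blow up induces $\pi\colon A \to B$; there is a morphism $G\colon A \to C$ that partially forgets the log structure, remembering only the fiber $F$; and a marked-point-forgetting morphism $ft\colon C \to D$. By Theorem~6 of \cite{Bou}, the $\hbar^{2g}$-coefficient of $(-i)(q^{1/2}-q^{-1/2})$ equals the genus-$g$ log Gromov-Witten invariant of $\bP^2$ in class $H$ with $\lambda_g$-insertion, one interior point and one toric-boundary point constraint (it is the $q$-refined multiplicity of a tropical line through two generic points). Hence it suffices to prove $\int_{[D]^{vir}}\gamma = \int_{[B]^{vir}}\gamma$, where $\gamma := (-1)^g\lambda_g\, ev^*[pt_{\FF_1}]\, ev^*[pt_{\bP^1}]$, with the analogous insertions on $B$.

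First I would invoke birational invariance of logarithmic Gromov-Witten invariants: since $\pi$ is a log modification, $\pi_*[A]^{vir} = [B]^{vir}$, so by the projection formula $\int_{[B]^{vir}}\gamma = \int_{[A]^{vir}}\pi^*\gamma = \int_{[A]^{vir}}\gamma$, the point constraints being chosen away from the exceptional curve. Next, restrict everything to the open substacks $A^{tt}, C^{tt}, D^{tt}$ of torically transverse curves. On these loci all log structures are trivial and the logarithmic tangent bundles of both $A$ and $C$ agree with $T\FF_1$; the map $G^{tt}$ is the identity on underlying stable maps, so Costello's pushforward formula (\cite{Cos}, together with the resulting comparison of obstruction theories) gives $G^{tt}_*[A^{tt}]^{vir} = [C^{tt}]^{vir}$. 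Combining this with the fundamental-class axiom applied to $ft$, which preserves torical transversality, shows that the cycles $G_*[A]^{vir}$ and $ft^*[D]^{vir}$ restrict to the same class on $C^{tt}$.

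It follows that $G_*[A]^{vir} - ft^*[D]^{vir}$ is supported on the closed locus $C\setminus C^{tt}$ of non-torically-transverse maps. Here I would run the properness argument of \cite{GPS} (Proposition~4.2), which shows, using only the image of the underlying ordinary stable map, that any such map has a nontrivial cycle in its dual intersection graph. For these curves the $\lambda_g$-vanishing lemma (Lemma~7 of \cite{Bou}, Lemma~6.1 of \cite{GRZ}) applies: gluing prestable curves along a loop of the dual graph forces the Hodge bundle to surject onto a trivial quotient, so $\lambda_g$ pairs to zero against the difference cycle. Therefore $\int_{G_*[A]^{vir}}\gamma = \int_{ft^*[D]^{vir}}\gamma$; applying the projection formula on each side, using that $G$ remembers the fiber $F$ on which $[pt_{\bP^1}]$ is fixed and that $ft$ remembers the relative marked point, gives $\int_{[A]^{vir}}\gamma = \int_{[D]^{vir}}\gamma$. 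Chaining the three equalities yields $\int_{[D]^{vir}}\gamma = \int_{[B]^{vir}}\gamma$, as required.

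The main obstacle will be the Costello pushforward step, and specifically the need to pass to the $tt$-locus before comparing obstruction theories: in positive genus the logarithmic obstruction spaces for $\FF_1(\log\partial\FF_1)$ and $\FF_1(\log F)$ genuinely differ over non-$tt$ curves, so the two perfect obstruction theories are identified, and $G^*E_C^{\bullet}\cong E_A^{\bullet}$ holds, only after restriction; one must check the hypotheses of Costello's theorem (compatible forgetful maps to the Artin stack $\mathfrak{M}_{g,4}$ of prestable curves, finiteness of $G^{tt}$) carefully, and verify that the $tt$-locus is open and that restriction of virtual classes behaves as expected. A secondary subtlety is ensuring the difference of cycles is genuinely supported on curves with a loop in the dual graph rather than on some other boundary stratum; this is exactly what the GPS balancing argument guarantees, and it is insensitive to the choice of log structure.
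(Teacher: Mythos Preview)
Your proposal is correct and in fact matches an earlier draft argument that is still visible (commented out) in the paper's source. The final version, however, streamlines the argument in two ways that are worth noting.

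First, the paper collapses your two maps $G\colon A\to C$ and $ft\colon C\to D$ into a single morphism $G\colon A\to C$, where the paper's $C$ is your $D=\barM_{g,2}(\FF_1(\log F),\pi^*H)$; this $G$ simultaneously forgets the extra log structure and the two relative markings on the divisors $B,\pi^*H$. The intermediate space $\barM_{g,4}(\FF_1(\log F),\pi^*H)$ is never introduced (and is indeed a bit awkward to make sense of, since $\pi^*H\cdot F=1$ leaves room for only one genuine contact point). Costello's theorem is then applied over $\mathfrak{M}_{g,2}$ rather than $\mathfrak{M}_{g,4}$, using that the forgetful map $\mathfrak{M}_{g,4}\to\mathfrak{M}_{g,2}$ is \'etale so the relative obstruction theory on $A^{tt}$ is unchanged.

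Second, and more substantively, the paper avoids the $\lambda_g$-vanishing / GPS step entirely. Instead it invokes Lemma~\ref{lem:irreducible} to restrict to maps with irreducible image, and then observes that an irreducible curve in $|\pi^*H|$ through two generic points (one interior, one on $F^\circ$) is unique and misses the toric fixed points, hence is torically transverse. Thus after capping with $\gamma$ the virtual class is already supported on the $tt$-locus, and the identity $[C]^{vir}\cap\gamma=[C^{tt}]^{vir}\cap\gamma$ (and likewise for $A$) is immediate. This bypasses the need to argue that the difference cycle is supported on curves with loops in the dual graph. Your route is more robust in principle (it would adapt to curve classes where the genericity argument is less transparent), but for the class $\pi^*H$ the paper's shortcut is both shorter and cleaner.
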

The proof of Proposition \ref{prop:vertexC} is given below Lemma \ref{lem:cos}. Recall that $B+F = \pi^*H$, and consider the following diagram,

\begin{center}
    \begin{tikzcd}
    \overline{\mathcal{M}}_{g,4}(\FF_1(\log \partial \FF_1), \pi^*H) \arrow[r, "\pi"] \arrow[d, "G"] & \barM_{g,4}(\bP^2(\log \partial \bP^2), H) \\
     \overline{\mathcal{M}}_{g,2}(\FF_1(\log F), \pi^*H)
    \end{tikzcd}
\end{center}
Denote $\barM_{g,4}(\bP^2(\log \partial \bP^2), H)$ to be the moduli space of genus-$g$, basic stable log maps to $\bP^2(\log \partial \bP^2)$ in the hyperplane class $H$, with 1 interior marked point and 3 relative marked points that each intersect distinct toric divisors of $\partial \bP^2$ with contact order 1. Denote $\overline{\mathcal{M}}_{g,4}(\FF_1(\log \partial \FF_1), \pi^*H)$ to be the moduli space of genus-$g$, basic stable log maps to $\FF_1(\log \partial \FF_1)$ in the hyperplane class $\pi^*H$ with 1 interior marked point, 1 relative marked point that intersects $\pi^*H$ once, and 2 relative marked points that intersect $F$ with contact order 1. Denote $\overline{\mathcal{M}}_{g,2}(\FF_1(\log F), \pi^*H)$ to be the moduli space of genus-$g$, basic stable log maps to $\FF_1(\log F)$ in the hyperplane class $\pi^*H$, with 1 interior marked point, and 2 relative marked points to the fiber $F$. We refer to \cite{GS13} \cite{Bou} \cite{Man1} for more details on the construction of moduli spaces of stable log maps.

We label the moduli spaces as,
\begin{equation}
\label{fig:logdiagram}
    \begin{tikzcd}
    A \arrow[r, "\pi"] \arrow[d, "G"] & B \\
     C 
    \end{tikzcd}
\end{equation}
The blow up map $\pi$ induces a map $A \rightarrow B$ via $f \mapsto \pi \circ f.$ Next, we define a birational morphism $G:A \rightarrow C$ that partially forgets the log structure of $\FF_1(\log \FF_1)$ and remembers a single fiber $F \subset \partial \FF_1$. Let $\cM_{(\FF_1, F)}$ be the divisorial log structure on $\FF_1$ given by $F$, and $\cM_{(\FF_1, \partial \FF_1)}$ the divisorial log structure on $\FF_1$ given by $\partial \FF_1.$ We have an inclusion of log structures $\cM_{(\FF_1, F)} \subset \cM_{(\FF_1, \partial \FF_1)}$, since $\cO^*_{\FF_1 \setminus F} \subset \cO^*_{\FF_1 \setminus \partial \FF_1}$, which induces a morphism of log schemes $\FF_1(\log \partial \FF_1) \rightarrow \FF_1(\log F)$ that is the identity map on underlying schemes. The morphism $G$ takes a stable log map $C \rightarrow \FF_1(\log \partial \FF_1)$ and composes it with $\FF_1(\log \partial \FF_1) \rightarrow \FF_1(\log F)$. In partially forgetting the log structure, the morphism $G$ forgets 2 relative marked points that define stable log maps in $A$ \cite{GS13}.

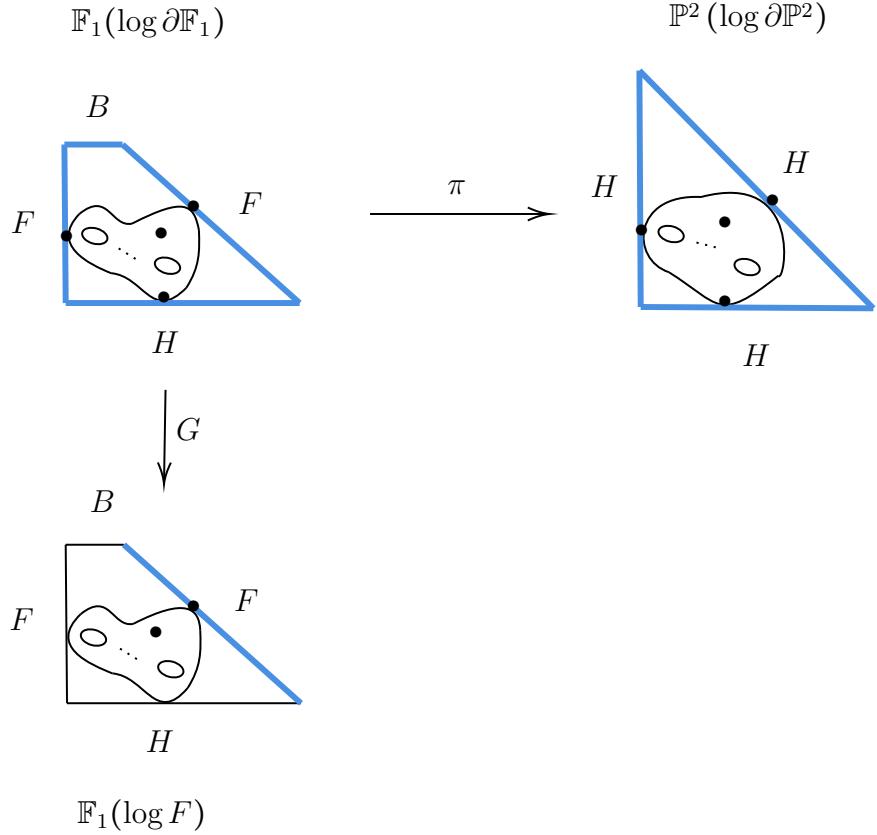
\begin{figure}[h!]
    \centering

\tikzset{every picture/.style={line width=0.75pt}} 

\begin{tikzpicture}[x=0.75pt,y=0.75pt,yscale=-1,xscale=1]

\draw    (234.84,123.06) -- (323.33,123.06) ;
\draw [shift={(325.33,123.06)}, rotate = 180] [color={rgb, 255:red, 0; green, 0; blue, 0 }  ][line width=0.75]    (10.93,-3.29) .. controls (6.95,-1.4) and (3.31,-0.3) .. (0,0) .. controls (3.31,0.3) and (6.95,1.4) .. (10.93,3.29)   ;
\draw    (131.68,211.82) -- (130.94,257.46) ;
\draw [shift={(130.91,259.46)}, rotate = 270.93] [color={rgb, 255:red, 0; green, 0; blue, 0 }  ][line width=0.75]    (10.93,-3.29) .. controls (6.95,-1.4) and (3.31,-0.3) .. (0,0) .. controls (3.31,0.3) and (6.95,1.4) .. (10.93,3.29)   ;
\draw    (104.25,152.95) .. controls (120.69,154.31) and (124.67,178) .. (142.19,160.29) ;
\draw    (142.19,160.29) .. controls (148.94,149.68) and (148.94,142.07) .. (148.51,131.19) ;
\draw    (114.79,127.92) .. controls (141.35,114.32) and (146,118) .. (148.51,131.19) ;
\draw    (104.25,152.95) .. controls (92.86,148.32) and (73.33,138.67) .. (88.23,123.03) ;
\draw    (114.79,127.92) .. controls (104.25,130.1) and (102.98,111.06) .. (88.23,123.03) ;
\draw    (401.73,114.5) .. controls (367.02,112.68) and (366,146) .. (385.48,152.41) ;
\draw    (401.73,114.5) .. controls (454,100.67) and (443.99,158.73) .. (440.74,154.22) ;
\draw    (385.48,152.41) .. controls (411.48,168.66) and (410,178.67) .. (440.74,154.22) ;
\draw  [dash pattern={on 0.84pt off 2.51pt}]  (108.04,140.44) -- (119.42,147.51) ;
\draw  [dash pattern={on 0.84pt off 2.51pt}]  (399.45,137.46) -- (409.92,140.1) ;
\draw [color={rgb, 255:red, 74; green, 144; blue, 226 }  ,draw opacity=1 ][line width=2.25]    (81.33,168) -- (199.33,168) ;
\draw [color={rgb, 255:red, 74; green, 144; blue, 226 }  ,draw opacity=1 ][line width=2.25]    (80.67,88) -- (81.33,168) ;
\draw [color={rgb, 255:red, 74; green, 144; blue, 226 }  ,draw opacity=1 ][line width=2.25]    (80.67,88) -- (110,88) ;
\draw [color={rgb, 255:red, 74; green, 144; blue, 226 }  ,draw opacity=1 ][line width=2.25]    (110,88) -- (199.33,168) ;
\draw    (104.91,354.95) .. controls (121.36,356.31) and (126.67,381.33) .. (142.86,362.29) ;
\draw    (142.86,362.29) .. controls (149.6,351.68) and (149.6,344.07) .. (149.18,333.19) ;
\draw    (115.45,329.92) .. controls (142.01,316.32) and (148.67,322) .. (149.18,333.19) ;
\draw    (104.91,354.95) .. controls (93.53,350.32) and (72.03,341.35) .. (88.89,325.03) ;
\draw    (115.45,329.92) .. controls (104.91,332.1) and (103.65,313.06) .. (88.89,325.03) ;
\draw  [dash pattern={on 0.84pt off 2.51pt}]  (108.71,342.44) -- (120.09,349.51) ;
\draw    (82,370) -- (200,370) ;
\draw    (81.33,290) -- (82,370) ;
\draw    (81.33,290) -- (110.67,290) ;
\draw [color={rgb, 255:red, 74; green, 144; blue, 226 }  ,draw opacity=1 ][fill={rgb, 255:red, 74; green, 144; blue, 226 }  ,fill opacity=1 ][line width=2.25]    (110.67,290) -- (200,370) ;
\draw [color={rgb, 255:red, 74; green, 144; blue, 226 }  ,draw opacity=1 ][line width=2.25]    (370.67,50.67) -- (371.33,170) ;
\draw [color={rgb, 255:red, 74; green, 144; blue, 226 }  ,draw opacity=1 ][line width=2.25]    (371.33,170) -- (488.67,170.67) ;
\draw [color={rgb, 255:red, 74; green, 144; blue, 226 }  ,draw opacity=1 ][line width=2.25]    (488.67,170.67) -- (370.67,50.67) ;
\draw   (126.38,147.42) .. controls (126.96,145.35) and (130.24,144.45) .. (133.71,145.42) .. controls (137.19,146.38) and (139.53,148.85) .. (138.95,150.92) .. controls (138.38,153) and (135.09,153.9) .. (131.62,152.93) .. controls (128.15,151.96) and (125.8,149.5) .. (126.38,147.42) -- cycle ;
\draw   (89.71,132.42) .. controls (90.29,130.35) and (93.57,129.45) .. (97.05,130.42) .. controls (100.52,131.38) and (102.87,133.85) .. (102.29,135.92) .. controls (101.71,138) and (98.43,138.9) .. (94.96,137.93) .. controls (91.48,136.96) and (89.14,134.5) .. (89.71,132.42) -- cycle ;
\draw   (380.38,131.42) .. controls (380.96,129.35) and (384.24,128.45) .. (387.71,129.42) .. controls (391.19,130.38) and (393.53,132.85) .. (392.95,134.92) .. controls (392.38,137) and (389.09,137.9) .. (385.62,136.93) .. controls (382.15,135.96) and (379.8,133.5) .. (380.38,131.42) -- cycle ;
\draw   (418.71,148.09) .. controls (419.29,146.02) and (422.57,145.12) .. (426.05,146.08) .. controls (429.52,147.05) and (431.87,149.52) .. (431.29,151.59) .. controls (430.71,153.67) and (427.43,154.56) .. (423.96,153.6) .. controls (420.48,152.63) and (418.14,150.17) .. (418.71,148.09) -- cycle ;
\draw   (89.05,335.09) .. controls (89.62,333.02) and (92.91,332.12) .. (96.38,333.08) .. controls (99.85,334.05) and (102.2,336.51) .. (101.62,338.59) .. controls (101.04,340.66) and (97.76,341.56) .. (94.29,340.6) .. controls (90.82,339.63) and (88.47,337.17) .. (89.05,335.09) -- cycle ;
\draw   (128.05,351.09) .. controls (128.62,349.02) and (131.91,348.12) .. (135.38,349.08) .. controls (138.85,350.05) and (141.2,352.51) .. (140.62,354.59) .. controls (140.04,356.66) and (136.76,357.56) .. (133.29,356.6) .. controls (129.82,355.63) and (127.47,353.17) .. (128.05,351.09) -- cycle ;

\draw (272.42,104.36) node [anchor=north west][inner sep=0.75pt]    {$\pi $};
\draw (135.51,224.46) node [anchor=north west][inner sep=0.75pt]    {$G$};
\draw (52.12,120) node [anchor=north west][inner sep=0.75pt]    {$F$};
\draw (166.81,110.43) node [anchor=north west][inner sep=0.75pt]    {$F$};
\draw (89.73,61.85) node [anchor=north west][inner sep=0.75pt]    {$B$};
\draw (122.99,180.64) node [anchor=north west][inner sep=0.75pt]    {$H$};
\draw (344.54,102.04) node [anchor=north west][inner sep=0.75pt]    {$H$};
\draw (441.18,90.07) node [anchor=north west][inner sep=0.75pt]    {$H$};
\draw (420.95,187.53) node [anchor=north west][inner sep=0.75pt]    {$H$};
\draw (82.7,17.36) node [anchor=north west][inner sep=0.75pt]    {$\mathbb{F}_{1}(\log \partial \mathbb{F}_{1})$};
\draw (85.76,417.1) node [anchor=north west][inner sep=0.75pt]    {$\mathbb{F}_{1}(\log F)$};
\draw (384.12,14.84) node [anchor=north west][inner sep=0.75pt]    {$\mathbb{P}^{2}\left(\log \partial \mathbb{P}^{2}\right)$};
\draw (51.46,320.66) node [anchor=north west][inner sep=0.75pt]    {$F$};
\draw (164.81,311.1) node [anchor=north west][inner sep=0.75pt]    {$F$};
\draw (92.06,260.85) node [anchor=north west][inner sep=0.75pt]    {$B$};
\draw (120.32,381.97) node [anchor=north west][inner sep=0.75pt]    {$H$};
\draw (413.79,127.68) node    {$\bullet$};
\draw (437.79,116.68) node    {$\bullet$};
\draw (413.79,167.68) node    {$\bullet$};
\draw (371.79,131.68) node    {$\bullet$};
\draw (129.45,133.35) node    {$\bullet$};
\draw (145.79,119.68) node    {$\bullet$};
\draw (130.79,165.68) node    {$\bullet$};
\draw (81.79,134.68) node    {$\bullet$};
\draw (126.79,334.68) node    {$\bullet$};
\draw (145.79,321.68) node    {$\bullet$};

\end{tikzpicture}

    \caption{The stable log maps with their marked points in Diagram \ref{fig:logdiagram}. The divisorial log structures are shaded in blue.}
    \label{fig:enter-label}
\end{figure}

To prove Proposition \ref{prop:vertexC}, we relate the virtual classes of the moduli spaces, and compute the virtual degree of $G$. We first recall the definition of torically transverse curves.

\begin{definition}[Definition 4.1 of \cite{NS}]
    Let $X$ be a toric variety. An algebraic curve $C \subset X$ is \textit{torically transverse or tt} if it is disjoint from all toric strata of codimension $>1.$ A stable map $\varphi:C \rightarrow X$ defined over a scheme $S$ is \textit{torically transverse or tt} if the following holds for the restriction $\varphi_s$ of $\varphi$ to every geometric point $s \rightarrow S: \varphi_s^{-1}(int X) \subset C_s$ is dense and $\varphi_s(C_s) \subset X$ is a torically transverse curve. 
\end{definition}

In \cite{NS}, they consider stable log maps to toric varieties with the toric log structure. For us, the stable log maps in $C$ have non-toric log structures. Hence, we define the locus of tt-curves of $C$ to be those stable log maps in which no component sinks into any component of the toric boundary. We denote $A^{tt}$ and $C^{tt}$ to be the open substacks of tt-curves in $A$ and $C$, respectively.

Let $G^{tt} := G|_{A^{tt}}$ be the restriction of the morphism $G$ onto $A^{tt}$. By Proposition 5.1, \cite{GS13}, $A$ carries a perfect obstruction theory $E_1^{\bullet}$ relative to the log stack of genus-g, pre-stable curves with 4 markings $\mathfrak{M}_{g,4}$, and defines a virtual fundamental class $[A]^{vir} \in A_*(A, \mathbb{Q})$. When restricted to $A^{tt}$, the resulting obstruction theory $E_1^{\bullet}|_{A^{tt}}$ remains perfect. Hence, $E_1^{\bullet}|_{A^{tt}}$ defines a cycle class of $A_*(A, \mathbb{Q})$, which is equal to $[A]^{vir}|_{A^{tt}}$. Hence, we define $[A^{tt}]^{vir} := [A]^{vir}|_{tt}.$ Similarly, let $E^{\bullet}_2$ be the perfect obstruction theory defined on $C$, and define $[C^{tt}]^{vir} := [C]^{vir}|_{tt}.$ The morphism $G$ is an isomorphism between $A^{tt}$ and $C^{tt}$, since there is only way to recover the 2 forgotten markings each mapping to distinct toric divisors. Hence, we have the equality of virtual classes, 

\begin{lemma}
\label{lem:cos}
    
    \[
    G^{tt}_*[A^{tt}]^{vir} = [C^{tt}]^{vir}
    \]
\begin{proof}
    We apply Theorem 5.0.1, \cite{Cos}. We have the diagram,
    
\begin{center}
\begin{tikzcd}[column sep=small]
E^{\bullet}_1|_{A^{tt}} \arrow[d] & & E^{\bullet}_2|_{C^{tt}} \arrow[d] \\
A^{tt} \arrow[dr, "ft_1"] \arrow[rr, "G^{tt}"] & & C^{tt} \arrow[dl, "ft_2"] \\
& \mathfrak{M}_{g,2} 
\end{tikzcd}
\end{center}
where $\mathfrak{M}_{g,2}$ is the log smooth stack of genus-g, pre-stable curves with 2 markings of pure dimension $3g-1$. The forgetful map $ft_1$ forgets the stable log map and 2 relative marked points mapping to $\pi^*H$ and $F$ respectively, and stabilizes. The forgetful map $ft_2$ forgets the stable log map and stabilizes. On the torically transverse locus, the curve class $B+F$ is transverse with generic contact order 1 with the toric divisors, and the logarithmic obstruction theories $E^{\bullet}_i$ restricted to tt-loci are both isomorphic to the respective obstruction theories of underlying stable map moduli spaces obtained by forgetting log structures. Thus, we have $(G^{tt})^* E^{\bullet}_2|_{C^{tt}} \cong E^{\bullet}_1|_{A^{tt}}$. The other assumptions for Theorem 5.0.1, \cite{Cos} are also satisfied. Since forgetting two marked points $\mathfrak{M}_{g,4} \rightarrow \mathfrak{M}_{g,2}$ is \'etale, the obstruction theory $E_1^{\bullet}|_{A^{tt}}$ relative to $\mathfrak{M}_{g,4}$ is isomorphic to the obstruction theory $E_1^{\bullet}|_{A^{tt}}$ relative to $\mathfrak{M}_{g,2}$. Hence, the obstruction theory $E_1^{\bullet}|_{A^{tt}}$ relative to $\mathfrak{M}_{g,2}$ also defines $[A^{tt}]^{vir}$. Thus, applying Theorem 5.0.1, \cite{Cos} to the diagram yields the desired equality. 
\end{proof}
\end{lemma}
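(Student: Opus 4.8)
The plan is to upgrade the observation that $G^{tt}$ has degree $1$ to the statement that it is an isomorphism of Deligne--Mumford stacks, to identify the two perfect obstruction theories under this isomorphism, and then to read off the equality of virtual classes, either from the functoriality of the virtual class under an isomorphism carrying one obstruction theory to the other or, as we shall do, from Costello's pushforward theorem \cite{Cos}, Theorem 5.0.1.

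First I would check the isomorphism claim at the level of families. A basic stable log map to $\FF_1(\log\partial\FF_1)$ of class $\pi^{*}H = B+F$ with torically transverse underlying map is disjoint from the $(-1)$-section $B$, since for a tt curve the identity $(B+F)\cdot B = 0$ forces no intersection, and it meets the $(+1)$-section, the distinguished toric fiber $F$, and the other toric fiber each in exactly one reduced point lying in the smooth locus of that divisor, as $(B+F)^{2} = 1$ and $(B+F)\cdot F = 1$. Hence the two relative markings forgotten by $G$, namely the one on the $(+1)$-section and the one on the other fiber, are reconstructed uniquely and compatibly in families, and conversely every object of $C^{tt}$ is obtained in this way; since on tt-curves the log structure is pulled back from the toric boundary and is therefore determined by the underlying map, $G^{tt}$ is a bijection on objects inducing isomorphisms of automorphism groups, hence an isomorphism. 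In particular $G^{-1}(C^{tt}) = A^{tt}$, and $[A]^{\vir}|_{A^{tt}}$ is the class cut out by the restricted, still perfect, obstruction theory $E_1^{\bullet}|_{A^{tt}}$, and likewise for $C$.

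The remaining, and I expect the most delicate, step is to match the obstruction theories. On $A^{tt}$ no component of the domain is absorbed into $\partial\FF_1$, so on the dense open preimage of the big torus the log structure is trivial and $f^{*}T^{\log}_{\FF_1(\log\partial\FF_1)}\cong f^{*}T_{\FF_1}\cong f^{*}T^{\log}_{\FF_1(\log F)}$; consequently $E_1^{\bullet}|_{A^{tt}}$ and $(G^{tt})^{*}E_2^{\bullet}|_{C^{tt}}$ are both the relative obstruction theory $\bigl(R^{\bullet}\pi_{*}f^{*}T_{\FF_1}\bigr)^{\vee}$ of ordinary stable maps to $\FF_1$. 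The one bookkeeping point is the change of base, since $E_1^{\bullet}$ is taken relative to $\mathfrak{M}_{g,4}$ and $E_2^{\bullet}$ relative to $\mathfrak{M}_{g,2}$: because the forgetful morphism $\mathfrak{M}_{g,4}\to\mathfrak{M}_{g,2}$ of log prestable curve stacks is étale, $E_1^{\bullet}|_{A^{tt}}$ serves without change as an obstruction theory relative to $\mathfrak{M}_{g,2}$ and still defines $[A^{tt}]^{\vir}$. With the compatibility $(G^{tt})^{*}E_2^{\bullet}|_{C^{tt}}\cong E_1^{\bullet}|_{A^{tt}}$ over $\mathfrak{M}_{g,2}$ in hand, I would conclude by applying \cite{Cos}, Theorem 5.0.1 to $G^{tt}$ over $\mathfrak{M}_{g,2}$, or, since $G^{tt}$ is an isomorphism, simply by invoking the functoriality of the virtual class, in either case obtaining $G^{tt}_{*}[A^{tt}]^{\vir} = [C^{tt}]^{\vir}$. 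The verification that $G^{tt}$ is an isomorphism is essentially combinatorial; the real work lies in the uniform-in-genus identification of the logarithmic obstruction theories with the ordinary one on the tt-locus and in the handling of the base change.
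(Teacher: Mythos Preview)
Your proposal is correct and follows essentially the same route as the paper: both identify $G^{tt}$ as an isomorphism, match the logarithmic obstruction theories with the ordinary one on the tt-locus, handle the base change from $\mathfrak{M}_{g,4}$ to $\mathfrak{M}_{g,2}$ via \'etaleness, and conclude by Costello's pushforward theorem. Your write-up is in fact more detailed on why the forgotten markings are uniquely recovered, which the paper states more briefly just before the lemma.
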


\begin{proof}[Proof of Proposition \ref{prop:vertexC}]
\label{proof:vertexC}
Define the class $\gamma := (-1)^g \lambda_g ev_1^*([pt_1])ev_2^*([pt_2]) \in A^{g+3}(C)$ where $[pt_1] \in A^2(\FF_1)$ is an interior point and $[pt_2] \in A^1(F)$ is a point on a fiber $F \subset \partial \FF_1$ away from a toric fixed point. By definition, we have $R_{g,2}(\FF_1(\log F), \pi^*H) = [C]^{vir} \cap \gamma$.

By Lemma \ref{lem:irreducible}, we can assume curves in the class $\pi^*H$ are irreducible. Generic irreducible curves in the linear system $|\pi^*H|$ passing through $[pt_1]$ and $[pt_2]$ do not pass through a third toric fixed point, and hence they are torically transverse. We can therefore evaluate $[C]^{vir} \cap \gamma$ on the tt-locus $C^{tt}$, 

\[
[C]^{vir} \cap \gamma = [C^{tt}]^{vir} \cap \gamma
\]
and similarly we have $[A]^{vir} \cap \gamma = [A^{tt}]^{vir} \cap \gamma$. By Lemma \ref{lem:cos}, we have,

\[
[C]^{vir} \cap \gamma = G^{tt}_* [A^{tt}]^{vir} \cap \gamma
\]
By the projection formula, we have,

\[
[C]^{vir} \cap \gamma = [A^{tt}]^{vir} \cap (G^{tt})^* \gamma
\]
Since $G$ remembers the $\bP^1$-fiber $F$ that $[pt_2]$ lies on, we have $(G^{tt})^* \gamma = \gamma$. Hence, the above is,

\[
[C]^{vir} \cap \gamma = [A^{tt}]^{vir} \cap \gamma
\]
Define $\gamma' := (-1)^g \lambda_g ev_1^*([pt'_1])ev_2^*([pt'_2]) \in A^{g+3}(B)$ where $[pt'_1] \in A^2(\bP^2)$ is an interior point and $[pt'_2] \in A^1(\bP^1)$ is a point on $\bP^1 \subset \partial \bP^2$ away from a toric fixed point. We have $\gamma = \pi^*\gamma'$ since $[pt_2']$ is chosen away from a toric fixed point that is blown up. By Lemma \ref{lem:irreducible}, we have $[A^{tt}]^{vir} \cap \gamma = [A]^{vir} \cap \gamma$. By the projection formula, we then have,

\[
[C]^{vir} \cap \gamma = \pi_*[A]^{vir} \cap \gamma'
\]
By the birational invariance of log Gromov-Witten invariants or Theorem 1.1.1, \cite{AW}, we have $\pi_*[A]^{vir} = [B]^{vir}$. Hence,

\[
[C]^{vir} \cap \gamma = [B]^{vir} \cap \gamma'
\]
By Theorem 6 of \cite{Bou}, $[B]^{vir} \cap \gamma'$ is the genus-$g$, logarithmic Gromov-Witten invariant of $\bP^2(\log \partial \bP^2)$ with $\lambda_g$-class, fixing 1 interior point and 1 point on the toric boundary, and is the coefficient of $\hbar^{2g}$ in $(-i)(q^{\frac{1}{2}} - q^{\frac{-1}{2}})$.
\end{proof}

Proposition \ref{prop:vertexC} shows that one of the log invariants in the degeneration can be evaluated by $q$-refined tropical curve counting \cite{Bou}. In Appendix \ref{sec:evaluation_vertexC}, we also directly evaluate the genus-1 invariant associated to vertex $V_3$ to be $\frac{-1}{24}$, i.e. the $\hbar^2$-coefficient of $(-i)(q^{\frac{1}{2}} - q^{\frac{-1}{2}})$. Using the invariants defined in Sections \ref{eq:vertexA_invariant}, \ref{eq:vertexB_invariant}, \ref{eq:vertexC_invariant}, we have,

\begin{proposition}
\label{prop:deg}
We have,
   \begin{align*}
     N_{g,1}(Z, \beta+h) = \sum_{\substack{\Gamma \in \Gamma(g,n,\beta), \\g = g_{V_1} + g_{V_3}  + g_{V_3}}}
    &\Bigl[(\beta\cdot E - 1)R_{g_{V_1} ,(1, \beta\cdot E-1)}(X(\log E), \beta)\cdot \\
    &R_{g_{V_2}, (\beta\cdot E - 1)}(\FF_1(\log F), (\beta\cdot E-1)B)R_{g_{V_3}, 2}(\FF_1(\log F), B+F)\Bigr]  
   \end{align*}

   \begin{proof}
   We order the edges of the bipartite graph $\Gamma$ in \ref{thm:deg-graphs} such that the edge representing the fixed relative contact point is the top edge. Applying the degeneration formula of \cite{KLR} we obtain the desired formula.
    \end{proof}
\end{proposition}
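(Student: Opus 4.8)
The plan is to combine Equation~\ref{eq:splitting_virtual} --- the output of the degeneration formula of \cite{KLR} once Theorem~\ref{thm:deg-graphs} has pinned down the contributing bipartite graphs --- with the vertex-by-vertex computations of Sections~\ref{sec:VertexA}--\ref{sec:VertexC}. Write $e := \beta\cdot E$. By log smooth deformation invariance of logarithmic Gromov--Witten theory (Appendix~A, \cite{MR}), $N_{g,1}(Z,\beta+h) = \alpha_0 \cap [\barM(\cL_0)]^{vir}$, where $\alpha_0 = [pt_{\cL_Y}]\in H^6(\cL_Y)$ is the point incidence class specialized into $\cL_Y\subset\cL_0$. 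Thus it suffices to evaluate $\alpha_0$ against the right-hand side of Equation~\ref{eq:splitting_virtual}.

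Fix a bipartite graph $\Gamma$ of Theorem~\ref{thm:deg-graphs}: a tree with vertices $V_1,V_2,V_3$ carrying classes $\beta$, $(e-1)B$, $B+F$, edge $e_1$ of weight $1$ joining $V_1$ and $V_3$, edge $e_2$ of weight $e-1$ joining $V_1$ and $V_2$, and $g = g_{V_1}+g_{V_2}+g_{V_3}$ (the graph being a tree, the arithmetic genus contributes no further summand). Since $\deg\Phi = \prod_e w_e / lcm\{w_e\} = 1$ and $|\Aut(\Gamma)| = 1$, the operators $F_*$ and $\Phi^*$ do not alter the resulting number, and the prefactor $lcm\{w_e\}/|\Aut(\Gamma)|$ is $e-1$, exactly as displayed in Equation~\ref{eq:splitting_virtual}; the only real work is $\Delta^{!}$, the refined Gysin pullback along the two gluing diagonals of $\cL_E\times\cL_E$, written in split form by expanding each diagonal class via K\"unneth. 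Since the single interior marked point lies on $V_3$, $\alpha_0$ pulls back to $ev_{\cL_Y}^{*}[pt_{\cL_Y}]$ on $\barM_{V_3}$. Ordering the edges so that $e_1$ --- the edge carrying the prescribed contact point --- is the top edge, the Cartesian squares and degree analyses of Sections~\ref{sec:VertexA}--\ref{sec:VertexC}, together with Lemmas~\ref{lem:deg3} and \ref{lem:deg4} (which show all three maps $ev_{\cL_E}$ factor through $E\times 0\hookrightarrow\cL_E$), select one K\"unneth term per edge: for $e_1$, $\gamma_{\cL_E} = [pt_E\times\bP^1]$ on the $V_1$-side paired with its Poincar\'e dual $\gamma_{\cL_E}^{\vee} = [E\times pt_{\bP^1}]$ on the $V_3$-side; for $e_2$, $[pt_E]$ on the $V_1$-side paired with the fundamental class on the $V_2$-side, so that $\barM_{V_2}$ receives no further insertion. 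Using the Cartesian squares to pass through $E\times 0$ (each time producing $(-1)^{g_{V_i}}\lambda_{g_{V_i}}$, the relevant normal bundles being trivial) and Lemma~\ref{lem:irreducible} to restrict to irreducible images at $V_3$, the three tensor factors become exactly $R_{g_{V_1},(1,e-1)}(X(\log E),\beta)$, $R_{g_{V_2},(e-1)}(\FF_1(\log F),(e-1)B)$ and $R_{g_{V_3},2}(\FF_1(\log F),B+F)$ of Equations~\ref{eq:vertexA_invariant}, \ref{eq:vertexB_invariant}, \ref{eq:vertexC_invariant}. Multiplying, with the prefactor $e-1$, and summing over the genus splittings $g = g_{V_1}+g_{V_2}+g_{V_3}$ yields the claimed identity.

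The one step beyond bookkeeping is the assertion that each gluing diagonal contributes a single K\"unneth term. Because $E$ is elliptic, $H^1(E)$ is two-dimensional, so the diagonal of $E$ a priori also contributes \emph{mixed} classes in $H^1(E)\otimes H^1(E)$ alongside $[pt_E]\otimes 1$ and $1\otimes[pt_E]$; one must verify that these, as well as any wrong-codimension terms at $e_1$, contribute zero --- either by the virtual-dimension count just used, or by the vanishing of $\lambda_g$ on curves whose dual graph acquires a cycle (as exploited for $V_3$). This is precisely the place where $\dim X = 2$ (so that $E$ is a curve with $H^1 \neq 0$) makes the argument genuinely more delicate than in the rational case, though the final answer still localizes to the three terms above.
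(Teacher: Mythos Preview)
Your approach is the paper's---assemble Equation~\ref{eq:splitting_virtual} with the vertex computations of Sections~\ref{sec:VertexA}--\ref{sec:VertexC}---and you correctly note $\deg\Phi=|\Aut(\Gamma)|=1$ and the prefactor $e-1$. But the K\"unneth bookkeeping at $e_2$ is wrong. The gluing is over $\cL_E = E \times \bP^1$, and the dimension count forces every edge insertion into $H^2(\cL_E)$. At $e_2$ the surviving pair is $[E\times pt_{\bP^1}]$ on the $V_1$-side (via the diagonal-$\bP^1$ factorization from the proof of Lemma~\ref{lem:deg2}, this insertion \emph{is} the Gysin restriction $i^!$ to $X\times 0$ in Section~\ref{sec:VertexA} that produces $\lambda_{g_{V_1}}$) and its dual $[pt_E\times\bP^1]$ on the $V_2$-side (restricting the evaluation, which does land in $E\times 0$, down to $pt_E$ and hence to a single $\FF_1$-fiber, producing $\lambda_{g_{V_2}}$). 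Your pair ``$[pt_E]$ on $V_1$, fundamental class on $V_2$'' is not Poincar\'e-dual in $\cL_E$ and, taken literally, leaves $\int_{[\barM_{V_2}]^{vir}}1$ ill-defined since $\vdim\barM_{V_2}=1$. Two related slips: Lemmas~\ref{lem:deg3}--\ref{lem:deg4} concern only $Y$-vertices and say nothing about $V_1$; and $ev_{\cL_E}$ at $V_3$ does \emph{not} factor through $E\times 0$---the class $B+F$ contains a full $\bP^1$-fiber $F$, which is exactly why the $V_3$-side insertion $\gamma_{\cL_E}^\vee=[E\times pt_{\bP^1}]$ pulls back nontrivially to $ev_{\bP^1}^*[pt_{\bP^1}]$ in Equation~\ref{eq:vertexC_invariant} (the paper's own sentence to the contrary in Section~\ref{sec:VertexC} is imprecise).

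Your $H^1(E)\otimes H^1(E)$ worry is settled by the dimension count you mention: each edge insertion is forced into $H^2(\cL_E)$, spanned by the algebraic classes $[pt_E\times\bP^1]$ and $[E\times pt_{\bP^1}]$, so no odd cohomology enters. The $\lambda_g$-vanishing alternative does not apply---it concerns cycles in the dual graph of the \emph{source} curve, and the tree $\Gamma$ guarantees none arise.
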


\section{Obtaining higher genus local Gromov-Witten invariants}
\label{sec:proof-of-main}

Recall that $\pi:\widehat{X} \rightarrow X$ is the blow up at a point. In this section, we prove Theorem \ref{thm:main} relating higher genus invariants $N_{g,1}(Z, \beta+h)$ to local invariants of $\widehat{X}$. Let $N_{g,0}(K_{\widehat{X}}, \pi^*\beta-C)$ be the genus-0, unmarked local Gromov-Witten invariant $\widehat{X}$ in curve class $\pi^*\beta-C$ \cite{CKYZ}, and $n_{g,0}(K_{\widehat{X}}, \pi^*\beta-C)$ to be the corresponding genus-$g$ Gopakumar-Vafa invariant \cite{GV1} \cite{GV2}. Given an effective curve class $\beta \in NE(X)$, let $ e:= \beta\cdot E$. Define the following generating functions,
\begin{align*}
    F_{V_2} &= \sum_{g_{V_2} \geq 0}R_{g_{V_2},(e-1)}(\FF_1(\log F), (e-1)B)\hbar^{2g_{V_2}} \\
    F_{V_3} &= \sum_{g_{V_3} \geq 0}R_{g_{V_3}, 2}(\FF_1(\log F), B+F)\hbar^{2g_{V_3}}
\end{align*}
Note that $F_{V_2}$ and $F_{V_3}$ are independent of $\beta \in NE(X)$. Using Proposition \ref{prop:deg}, we prove Theorem \ref{thm:main_intro}, 

\begin{theorem}[= Theorem \ref{thm:main_intro}]
\label{thm:main}
There exists constants $c(g, \beta) \in \mathbb{Q}$ (Equation \ref{eq:proof-of-oP1,5}) such that,
\begin{multline*}
    \sum_{\substack{g \geq 0, \\ \beta \in NE(X)}}N_{g,1}(Z, \beta+h)\hbar^{2g}Q^{\beta} = \\ \sum_{\substack{g \geq 0, \\ \beta \in NE(X)}}\left[c(g, \beta)n_{g}\left(K_{\widehat{X}}, \pi^*\beta-C\right)\left(2\sin\frac{\hbar}{2}\right)^{2g - 2}Q^{\beta}\right] - \Delta
\end{multline*}
where the discrepancy $\Delta$ (Equation \ref{eq:delta_pl}) is expressed by the Gromov-Witten theory of $E$, and genus-$g$, 2-pointed logarithmic invariants $R_{g, (1,\beta\cdot E - 1)}(X(\log E), \beta)$ for all $g \geq 0$ and $\beta \in NE(X)$.
\begin{proof}

From here on, we simplify notation by indexing sums $\displaystyle{\sum_{g,\beta}}$ with \\$\beta \in NE(X)$ and $g \geq 0$, when there is no confusion. Summing over all genus in Proposition \ref{prop:deg},

\begin{equation}
\label{eq:proof-of-oP1,1}
    \begin{split}
        \sum_{g \geq 0}N_{g,1}(Z, \beta+h)\hbar^{2g} &= (\beta\cdot E - 1)\left(\sum_{g_{V_1} \geq 0}R_{g_{V_1},(1, \beta\cdot E-1)}(X(\log E), \beta)\hbar^{2g_{V_1}}\right)F_{V_2} F_{V_3}
    \end{split}
\end{equation}
Summing over all curve classes $\beta \in NE(X)$, and applying Corollary 6.6 of \cite{GRZ} to $R_{g_{V_1}, 2}(X(\log E))$ in Equation \ref{eq:proof-of-oP1,1},

\begin{equation}
\label{eq:proof-of-oP1,2}
    \begin{split}
        \sum_{g, \beta}N_{g,1}(Z, \beta+h)\hbar^{2g}Q^{\beta} &= \sum_{\beta \in NE(X)}\Biggl[(\beta\cdot E - 1)\biggl(\sum_{g_{V_1} \geq 0}\Bigl[R_{g_{V_1}, (\beta\cdot E - 1)}(\widehat{X}, \pi^*\beta-C) \\
        &- \sum_{i=0}^{g_{V_1} - 1} R_{i,(1, \beta\cdot E -1)}(X(\log E), \beta)N(g_{V_1}-i, 1)\Bigr]\hbar^{2g_{V_1}}\biggr)F_{V_2} F_{V_3}\Biggr]Q^{\beta}
    \end{split}
\end{equation}

For reasons we shall see, we define $\Delta$ to be the term,


\begin{equation}
\label{eq:delta_pl}
    \begin{split}
        \Delta &:= \sum_{\beta}\Biggl[(\beta\cdot E - 1)\Biggl(\sum_{g_{V_1} \geq 0}\biggl[(-1)^{\beta\cdot E}(\beta\cdot E - 1)\sum_{n \geq 0}\biggl[\sum_{\substack{g_{V_1} = h+g_1+\ldots+g_n, \\ \mathbf{a} = (a_1,\ldots, a_n) \in \mathbb{Z}^n_{\geq 0}, \\ \beta = d_E [E] + \beta_1 + \ldots + \beta_n, \\ d_E \geq 0, \beta_j \cdot D > 0}} \frac{(-1)^{g_{V_1}-1 + (E\cdot E)d_E}(E\cdot E)^m}{m!|Aut(\mathbf{a}, g_{V_1})|}  \\ 
        & N_{h, (\mathbf{a}, 1^m)}(E, d_E)\prod_{j=1}^n ((-1)^{\beta_j \cdot E}(\beta_j \cdot E) R_{g_j, (\beta_j\cdot E)}(\widehat{X}, \beta_j) \biggr] \\
        &+ \sum_{i=0}^{g_{V_1} - 1} R_{i,(1, \beta\cdot E -1)}(X(\log E), \beta)N(g_{V_1}-i, 1)\biggr]\hbar^{2g_{V_1}}\biggr)F_{V_2} F_{V_3} \Biggr]Q^{\beta}
    \end{split}
\end{equation}
where $N_{h, (\mathbf{a}, 1^m)}(E, d_E)$ are stationary invariants of $E$ defined in Section \ref{sec:g>0_loglocal}, Appendix. For $g_{V_1} \geq 0$ and $\beta \in NE(X)$, define $\Delta(g_{V_1}, \beta)$ by the sum, 

\begin{equation}
    \label{eq:delta_pl_gb}
    \Delta = \sum_{g_{V_1} \geq 0}\sum_{\beta \in NE(X)} \Delta(g_{V_1}, \beta) \hbar^{2g_{V_1}}Q^{\beta}
\end{equation}

Applying the $g>0$ log-local principle (combining Propositions 3.1 and 3.4 of \cite{BFGW}) to $R_{g_{V_1}}(\widehat{X}, \pi^*\beta-C)$ in Equation \ref{eq:proof-of-oP1,2}, we have,

\begin{equation}
\label{eq:proof-of-oP1,4}
    \begin{split}
        \sum_{g, \beta}N_{g,1}(Z, \beta+h)\hbar^{2g} &= \sum_{\beta}\left[(-1)^{\beta\cdot E}(\beta\cdot E - 1)^2\left(\sum_{g_{V_1} \geq 0}N_{g_{V_1}}(K_{\widehat{X}}, \pi^*\beta-C)\hbar^{2g_{V_1}}\right)F_{V_2} F_{V_3}\right]Q^{\beta} \\
        &- \Delta
    \end{split}
\end{equation}
For each $g_{V_1} \geq 0$ and $\beta \in NE(X)$, there exists a constant $c(g_{V_1}, \beta)$ that represents the overall contribution of $F_{V_2} F_{V_3}$ to the coefficient of $\hbar^{2g_{V_1}}$. Also absorbing $(-1)^{\beta\cdot E}(\beta\cdot E - 1)^2$ into $c(g_{V_1},\beta)$, Equation \ref{eq:proof-of-oP1,4} becomes,

\begin{equation}
\label{eq:proof-of-oP1,5}
    \begin{split}
        \sum_{g, \beta}N_{g,1}(Z, \beta+h)\hbar^{2g} &= \sum_{g, \beta}\left[c(g, \beta)N_{g}(K_{\widehat{X}}, \pi^*\beta-C)\hbar^{2g}Q^{\beta}\right] - \Delta
    \end{split}
\end{equation}
after relabelling $g_{V_1}$ by $g$. Substituting the closed Gopakumar-Vafa formula for toric Calabi-Yau threefolds \cite{GV1} \cite{GV2}, Equation \ref{eq:proof-of-oP1,5} becomes,

\[
    \sum_{g, \beta}N_{g,1}(Z, \beta+h)\hbar^{2g} = \sum_{g, \beta}\left[c(g, \beta)n_{g}(K_{\widehat{X}}, \pi^*\beta-C)\left(2\sin\frac{\hbar}{2}\right)^{2g - 2}Q^{\beta}\right] - \Delta
\]
\end{proof}
\end{theorem}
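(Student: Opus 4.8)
The plan is to assemble the statement directly from the degeneration formula established earlier, combined with the higher-genus log-local principle. The starting point is Proposition \ref{prop:deg}, which expresses $N_{g,1}(Z,\beta+h)$ as a sum over tripartite splittings $g=g_{V_1}+g_{V_2}+g_{V_3}$ of a product of three vertex invariants. First I would package the vertex-$V_2$ and vertex-$V_3$ contributions into the genus generating functions $F_{V_2},F_{V_3}$; these are series in $\hbar^2$ that depend only on $e=\beta\cdot E$ (and in the case of $F_{V_3}$, by Proposition \ref{prop:vertexC}, are literally $(-i)(q^{1/2}-q^{-1/2})$, independent even of $\beta$). So after summing over $g$, the left-hand side factors as in Equation \ref{eq:proof-of-oP1,1}: a single series in the vertex-$V_1$ invariants $R_{g_{V_1},(1,e-1)}(X(\log E),\beta)$ times $F_{V_2}F_{V_3}$.

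Next I would rewrite the vertex-$V_1$ invariants. By Corollary 6.6 of \cite{GRZ}, the log invariant $R_{g_{V_1},(1,e-1)}(X(\log E),\beta)$ equals the maximal-tangency log invariant $R_{g_{V_1},(e-1)}(\widehat X,\pi^*\beta-C)$ of the blow-up minus a correction involving lower-genus log invariants of $X(\log E)$ and Hodge integrals $N(g_{V_1}-i,1)$ over $\overline{\mathcal M}_{g,1}$; this gives Equation \ref{eq:proof-of-oP1,2}. Then apply the higher-genus log-local principle (Propositions 3.1 and 3.4 of \cite{BFGW}), which converts $R_{g_{V_1},(e-1)}(\widehat X,\pi^*\beta-C)$ into the local Gromov-Witten invariant $N_{g_{V_1}}(K_{\widehat X},\pi^*\beta-C)$ up to a sign $(-1)^{\beta\cdot E}(\beta\cdot E-1)$ and up to a further correction term built from stationary descendant invariants $N_{h,(\mathbf a,1^m)}(E,d_E)$ of the elliptic curve $E$ and log invariants of $\widehat X$ in smaller classes. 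I would collect \emph{all} of these correction terms — the one from Corollary 6.6 and the one from the log-local principle — into the single quantity $\Delta$ defined in Equation \ref{eq:delta_pl}; by construction $\Delta$ is expressed purely through the Gromov-Witten theory of $E$ and the invariants $R_{g,(1,\beta\cdot E-1)}(X(\log E),\beta)$, which is exactly what the statement asserts about the discrepancy.

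What remains is bookkeeping: after the substitution the main term reads $\sum_\beta (-1)^{\beta\cdot E}(\beta\cdot E-1)^2\big(\sum_{g_{V_1}}N_{g_{V_1}}(K_{\widehat X},\pi^*\beta-C)\hbar^{2g_{V_1}}\big)F_{V_2}F_{V_3}\,Q^\beta-\Delta$ (Equation \ref{eq:proof-of-oP1,4}). For each fixed $g$ and $\beta$, the product of the three factors $F_{V_2}F_{V_3}$ with the local-invariant series contributes, in the coefficient of $\hbar^{2g}$, a finite $\mathbb Q$-linear combination of $N_{g'}(K_{\widehat X},\pi^*\beta-C)$ for $g'\le g$; absorbing that combinatorial factor together with $(-1)^{\beta\cdot E}(\beta\cdot E-1)^2$ defines the rational constant $c(g,\beta)$ of Equation \ref{eq:proof-of-oP1,5}. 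Strictly speaking one should note that because the local invariants of different genera mix, the passage is cleanest when organized as an equality of generating functions and $c(g,\beta)$ is read off degree by degree; I would make that explicit. Finally, substituting the Gopakumar-Vafa multiple-cover formula $\sum_g N_g(K_{\widehat X},\pi^*\beta-C)\hbar^{2g}=\sum_g n_g(K_{\widehat X},\pi^*\beta-C)(2\sin\tfrac{\hbar}{2})^{2g-2}$ of \cite{GV1}\cite{GV2} converts the main term into the stated form.

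The main obstacle is the honest definition and control of $c(g,\beta)$: because $F_{V_2}$, $F_{V_3}$, and the GV multiple-cover substitution all redistribute powers of $\hbar$ across genera, the coefficient of $\hbar^{2g}Q^\beta$ on the right-hand side is not simply $c(g,\beta)n_g$ but a triangular combination over $g'\le g$, so one must either phrase the theorem as an identity of formal power series with $c(g,\beta)$ extracted by inverting a (unit) triangular matrix, or absorb the reindexing carefully — which is what Equation \ref{eq:proof-of-oP1,5} does. The other delicate point is ensuring that every correction term produced along the way genuinely lands inside $\Delta$ as defined, i.e. that no stray contribution involves invariants of $Z$ or $K_{\widehat X}$ rather than of $E$ and $X(\log E)$; this is guaranteed because both Corollary 6.6 of \cite{GRZ} and Propositions 3.1, 3.4 of \cite{BFGW} have exactly this structure, but it should be checked term by term when writing out Equation \ref{eq:delta_pl}.
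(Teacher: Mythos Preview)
Your proposal is correct and follows essentially the same route as the paper: start from Proposition~\ref{prop:deg}, package the $V_2$ and $V_3$ vertices into the series $F_{V_2},F_{V_3}$ to obtain Equation~\ref{eq:proof-of-oP1,1}, apply Corollary~6.6 of \cite{GRZ} to reach Equation~\ref{eq:proof-of-oP1,2}, then the higher-genus log-local principle of \cite{BFGW} to reach Equation~\ref{eq:proof-of-oP1,4}, collect the corrections into $\Delta$, absorb the remaining factors into $c(g,\beta)$, and finish with the Gopakumar--Vafa substitution. Your caution about the triangular mixing in the definition of $c(g,\beta)$ is well placed; the paper handles this step in the same loose fashion (Equation~\ref{eq:proof-of-oP1,5}), and your observation that $\Delta$ as written still contains $R_{g_j,(\beta_j\cdot E)}(\widehat X,\beta_j)$ terms, which must be re-expressed via Corollary~6.6 of \cite{GRZ} to match the theorem's description, is also a point the paper addresses only in the remark following the proof.
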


Theorem \ref{thm:main} relates higher genus invariants $N_{g,1}(Z, \beta+h)$ of the projectivized canonical bundle to higher genus Gopakumar-Vafa invariants of $K_{\widehat{X}}$. By Corollary 6.6 of \cite{GRZ}, the discrepancy term $\Delta$ in Theorem \ref{thm:main} is expressible by 2-pointed log invariants $R_{g,2}(X(\log E), \beta)$ and stationary invariants $N_{h, (\mathbf{a}, 1^m)}(E, d_E)$ of the elliptic curve for all $g,h$.

\begin{remark}
\label{rem:stationary_log}
  The stationary Gromov-Witten theory of $E$ is quasimodular as it is expressible by Eisenstein series \cite{OP}. The 2-pointed log invariants of $X(\log E)$ appear in Gross-Siebert mirror symmetry as structure constants of the mirror algebra of theta functions \cite{GS16}; they can be computed via $q$-refined tropical curve counting \cite{Gra}.
\end{remark}

\subsection{Genus-1}
\label{subsec:explicit g=1,2}
We specialize Theorem \ref{thm:main} to genus-1. For simplicity, we will at times suppress notation for the log structure or curve class by writing $R_{g,(p,q)}(X(\log E), \beta)$ as $R_{g,(p,q)}(X)$. Let $n_1(K_{\widehat{X}}, \pi^*\beta-C)$ be the genus-1,  Gopakumar-Vafa invariant of $K_{\widehat{X}}$ in class $\pi^*\beta-C.$

\begin{corollary}[Theorem \ref{thm:main} in genus-1]
\label{cor:maing1}
Let $\beta \in NE(X,\mathbb{Z})$. We have,

\[
N_{1,1}(Z, \beta+h) = n_1(K_{\widehat{X}}, \pi^*\beta-C) - \delta_1(\beta)
\]
where $\delta_1(\beta)$ is expressed by genus-1 Gromov-Witten invariants of $E$ and defined in Appendix A, Equation \ref{eq:delta1}. 
\end{corollary}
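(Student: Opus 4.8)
The plan is to specialize the all-genus formula of Theorem~\ref{thm:main} to the coefficient of $\hbar^2$, i.e.\ to extract the genus-$1$ part of the identity
\[
\sum_{g,\beta}N_{g,1}(Z,\beta+h)\hbar^{2g} = \sum_{g,\beta}\left[c(g,\beta)N_{g}(K_{\widehat X},\pi^*\beta-C)\hbar^{2g}Q^\beta\right]-\Delta.
\]
First I would isolate the genus-$1$ term on the left, which is simply $N_{1,1}(Z,\beta+h)$, and identify the genus-$1$ term on the right. For the right-hand side one uses Proposition~\ref{prop:deg} in genus $1$: the genus decomposition $g=g_{V_1}+g_{V_2}+g_{V_3}=1$ gives exactly three contributing terms, according to whether the $\hbar^2$ comes from vertex $V_1$, $V_2$, or $V_3$. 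From Section~\ref{sec:VertexB}, the generating function $F_{V_2}=\tfrac{\hbar}{2}\csc\tfrac{(e-1)\hbar}{2}$ has constant term $\tfrac{(-1)^e}{(e-1)^2}$ and $\hbar^2$-coefficient $\tfrac{(-1)^e}{24}$; from Proposition~\ref{prop:vertexC}, $F_{V_3}=(-i)(q^{1/2}-q^{-1/2})$ has constant term $1$ and $\hbar^2$-coefficient $-\tfrac{1}{24}$. Plugging these into $(\beta\cdot E-1)\bigl(\sum_{g_{V_1}}R_{g_{V_1},(1,e-1)}(X)\hbar^{2g_{V_1}}\bigr)F_{V_2}F_{V_3}$ and collecting the $\hbar^2$-coefficient produces a linear combination of $R_{1,(1,e-1)}(X)$ and $R_{0,(1,e-1)}(X)$ with explicit rational coefficients depending only on $e=\beta\cdot E$; this fixes the value of $c(1,\beta)$ appearing in Theorem~\ref{thm:main} (it is the product $(-1)^{\beta\cdot E}(\beta\cdot E-1)^2\cdot[\hbar^2]\,(F_{V_2}F_{V_3})/[\hbar^0]\,(F_{V_2}F_{V_3})$, or more precisely the coefficient organizing the whole $\hbar^2$ contribution).

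Next I would invoke Corollary~6.6 of \cite{GRZ} together with the $g>0$ log-local principle of \cite{BFGW}, exactly as in the proof of Theorem~\ref{thm:main}, to rewrite the genus-$1$ logarithmic invariant $R_{1,(1,e-1)}(X)$ in terms of the genus-$1$ local invariant $N_1(K_{\widehat X},\pi^*\beta-C)$ (equivalently the Gopakumar-Vafa invariant $n_1(K_{\widehat X},\pi^*\beta-C)$, which in genus $1$ coincides with $N_1$ up to the genus-$0$ multiple-cover corrections already absorbed into $\Delta$) plus lower-genus pieces. The lower-genus pieces — products $R_{0,(1,e-1)}(X)\cdot N(1,1)$ and the degree-$1$ stationary invariants $N_{h,(\mathbf a,1^m)}(E,d_E)$ of the elliptic curve at $h\le 1$ — are precisely what I would package into the genus-$1$ discrepancy $\delta_1(\beta)$, so that $\delta_1(\beta)$ is the $\hbar^2 Q^\beta$-coefficient $\Delta(1,\beta)$ of $\Delta$ from \eqref{eq:delta_pl_gb}, written out explicitly. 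After normalizing the overall constant (so that the coefficient of $n_1(K_{\widehat X},\pi^*\beta-C)$ becomes $1$, as claimed in the statement), one obtains $N_{1,1}(Z,\beta+h)=n_1(K_{\widehat X},\pi^*\beta-C)-\delta_1(\beta)$, and I would record the explicit formula for $\delta_1(\beta)$ in Appendix~A, Equation~\eqref{eq:delta1}, as referenced.

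The main obstacle is bookkeeping rather than conceptual: one must carefully track the three sources of the $\hbar^2$-coefficient (the $\lambda_1$-insertion on $V_1$, the $V_2$ Hodge-bundle contribution, and the $V_3$ tropical contribution), keep the sign $(-1)^{\beta\cdot E}$ and the combinatorial factors $(\beta\cdot E-1)$ and $(\beta\cdot E-1)^2$ straight, and verify that the genus-$0$ multiple-cover relation between $N_1$ and $n_1$ for a local Calabi-Yau threefold (namely $N_1 = n_1 + \tfrac{1}{12}\sum_{d\mid\beta}\tfrac{1}{d}n_0(\beta/d)$, or the appropriate variant) is consistent with what has already been folded into $\Delta$, so that the stated clean form with coefficient $1$ in front of $n_1$ is correct. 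I expect no new geometric input is needed beyond Theorem~\ref{thm:main}, Proposition~\ref{prop:deg}, Proposition~\ref{prop:vertexC}, the genus-$1$ value $R_{g_{V_2}}(\FF_1(\log F),(e-1)B)$ from \eqref{eq:vertexB_invariant_also}, and the cited results \cite{GRZ,BFGW}; the genus-$1$ corollary is a direct specialization, and the Appendix formula for $\delta_1(\beta)$ is obtained by simply expanding $\Delta(1,\beta)$ using the genus-$\le1$ stationary Gromov-Witten theory of $E$ (Eisenstein series, via \cite{OP}).
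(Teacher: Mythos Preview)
Your approach is essentially the same as the paper's: start from Proposition~\ref{prop:deg}, extract the $\hbar^2$-coefficient using the explicit values of $F_{V_2}$ and $F_{V_3}$, then apply Corollary~6.6 of \cite{GRZ} and the genus-$1$ log-local principle of \cite{BFGW}, and finally the genus-$1$ Gopakumar--Vafa relation. That is exactly what the paper does.

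There is one genuine imprecision, however. You identify $\delta_1(\beta)$ with the genus-$1$ piece $\Delta(1,\beta)$ of the discrepancy in Theorem~\ref{thm:main}, and speak of ``normalizing'' so that the coefficient of $n_1$ becomes $1$. Neither is quite right. The quantity $\delta_1(\beta)$ defined in \eqref{eq:delta1} is \emph{not} $\Delta(1,\beta)$; it is specifically the elliptic-curve contribution arising when the genus-$1$ log-local principle is applied to $R_{1,(e-1)}(\widehat X)$. What actually happens in the paper is that, after substituting the log-local principle and the Gopakumar--Vafa formula $N_1=n_1+\tfrac{1}{12}n_0$, one is left with $n_1(K_{\widehat X})-\delta_1(\beta)$ plus four residual genus-$0$ terms of the form $\tfrac{(-1)^e}{12(e-1)}R_{0,(1,e-1)}(X)$, $\tfrac{(-1)^{e+1}(e-1)}{24}R_{0,(e-1)}(\widehat X)$, $\tfrac{(-1)^{e+1}}{12(e-1)}R_{0,(1,e-1)}(X)$, and $\tfrac{(-1)^e(e-1)}{24}R_{0,(1,e-1)}(X)$. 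These cancel \emph{exactly} upon using $R_{0,(1,e-1)}(X)=R_{0,(e-1)}(\widehat X)$ (Corollary~6.6 of \cite{GRZ}), so the coefficient $1$ in front of $n_1$ is the outcome of a cancellation, not a normalization. Your outline would become a complete proof once you track this cancellation and define $\delta_1(\beta)$ as the surviving elliptic-curve term rather than as $\Delta(1,\beta)$.
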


\begin{proof}

Define $e := \beta \cdot E.$ The genus-1 invariants in $F_{V_2}$ and $F_{V_3}$ (definition in top of Section \ref{sec:proof-of-main}) are $\frac{(-1)^e}{24}$ and $\frac{-1}{24}$, respectively. By Proposition \ref{prop:deg}, we have,

\begin{equation}
\label{eq:explicitg1,1}
    \begin{split}
        N_{1,1}(Z, \beta+h) &= (e-1)\left[\frac{(-1)^e}{(e-1)^2}R_{1, (1,e-1)}(X(\log E), \beta) \right.\\
        &+ \left.\frac{(-1)^e}{24}R_{0, (1,e-1)}(X(\log E), \beta) \right.\\
        &+ \left.\frac{(-1)^{e+1}}{24(e-1)^2}R_{0, (1,e-1)}(X(\log E), \beta)\right] 
    \end{split}
\end{equation}
Applying Corollary 6.6 of \cite{GRZ} to $R_{1, (1,e-1)}(X)$, 

\begin{equation}
\label{eq:explicitg1,2}
\begin{split}
N_{1,1}(Z, \beta+h) &= \frac{(-1)^e}{(e-1)}R_{1, (e-1)}(\widehat{X}) + \left(\frac{(-1)^{e+1}}{12(e-1)} + \frac{(-1)^e(e-1)}{24}\right)R_{0, (1,e-1)}(X)
\end{split}
\end{equation}
The genus-1, log-local principle \cite{BFGW} tells us that,

\[
R_{1, (e-1)}(\widehat{X}) = (-1)^e(e-1)\left[N_1(K_{\widehat{X}}) + \frac{(-1)^{e+1}(e-1)}{24}R_{0, (e-1)}(\widehat{X}) - \delta_1(\beta)\right]
\]
where $\delta_1(\beta)$ is defined in Equation \ref{eq:delta1} in the Appendix; applying it to Equation \ref{eq:explicitg1,2}, 

\begin{equation}
\label{eq:explicitg1,3}
\begin{split}
N_{1,1}(Z, \beta+h) &= N_1(K_{\widehat{X}}) + \frac{(-1)^{e+1}(e-1)}{24}R_{0, (e-1)}(\widehat{X}) - \delta_1(\beta) \\
&+ \left(\frac{(-1)^{e+1}}{12(e-1)} + \frac{(-1)^e(e-1)}{24}\right)R_{0, (1,e-1)}(X, \beta)
\end{split}
\end{equation}
The genus-1 closed Gopakumar-Vafa formula for Calabi-Yau threefolds for the class $\pi^*\beta-C$ states that,

\begin{equation}
\label{eq:explicitg1,4}
 N_1(K_{\widehat{X}}, \pi^*\beta-C) = n_1(K_{\widehat{X}}, \pi^*\beta-C) + \frac{1}{12}n_0(K_{\widehat{X}}, \pi^*\beta-C)   
\end{equation}
By Corollary 6.6 of \cite{GRZ} and the $g=0$ log-local principle \cite{vGGR}, we have the equalities,

\begin{equation}
\label{eq:explicitg1,5}
  R_{0, (1,e-1)}(X(\log E), \beta) = R_{0, (e-1)}(\widehat{X}(\log \pi^*E-C), \pi^*\beta-C) = (-1)^e(e-1)n_0(K_{\widehat{X}}, \pi^*\beta-C)  
\end{equation}
Applying Equations \ref{eq:explicitg1,4} and \ref{eq:explicitg1,5} to Equation \ref{eq:explicitg1,3}, we have,

\begin{equation}
\label{eq:explicitg1,6}
\begin{split}
N_{1,1}(Z, \beta+h) &= n_1(K_{\widehat{X}}) + \frac{(-1)^e}{12(e-1)}R_{0, (1,e-1)}(X) + \frac{(-1)^{e+1}(e-1)}{24}R_{0, (e-1)}(\widehat{X}) - \delta_1(\beta) \\
&+ \left(\frac{(-1)^{e+1}}{12(e-1)} + \frac{(-1)^e(e-1)}{24}\right)R_{0, (1,e-1)}(X, \beta)
\end{split}
\end{equation}
Since $R_{0, (1, e-1)}(X) = R_{0, (e-1)}(\widehat{X})$ by Corollary 6.6, \cite{GRZ}, we have,

\[
  N_{1,1}(Z, \beta+h) = n_1(K_{\widehat{X}}, \pi^*\beta-C) - \delta_1(\beta)  
\]
\end{proof}

\begin{remark}
    The genus-0 open-closed result of \cite{Cha} states $N_{0,1}(Z,\beta+h) = O_0(K_X/L, \beta+\beta_0, 1)$. On the otherhand, by the remarks in Section 2.2 of \cite{GRZ} and Theorem 1.1, \cite{LLW}, we have $O_0(K_X/L,\beta+\beta_0, 1) = N_0(K_{\widehat{X}}, \pi^*\beta-C)$. The genus-0, Gopakumar-Vafa formula states $N_0(K_{\widehat{X}}, \pi^*\beta-C) = n_0(K_{\widehat{X}}, \pi^*\beta-C)$. Combining the above equalities, we have $N_{0,1}(Z, \beta+h) = n_0(K_{\widehat{X}}, \pi^*\beta-C)$, which is Theorem \ref{thm:main} specialized to genus-0. Corollary \ref{cor:maing1} tells us that in genus-1, the relation between $N_{1,1}(Z, \beta+h)$ and $n_0(K_{\widehat{X}}, \pi^*\beta-C)$ is corrected by stationary invariants of the elliptic curve $\delta_1(\beta)$.
\end{remark}

\section{Blow up formulas for projective bundles}

\label{sec:blow-up}

Genus-0 blow up formulas in Gromov-Witten theory were studied by \cite{Gat} \cite{Hu}. In real dimension-6, blow up formulas for descendant invariants were proven in all-genus \cite{HHKQ}. For logarithmic Gromov-Witten theory, blow up formulas appear in the work of \cite{AW}, which relates virtual classes of stable log moduli spaces for in fact more general birational morphisms.

In this section, we prove a blow up formula for invariants associated to projective bundles in all-genus in Theorem \ref{thm:blow-up}. We use the invariance of Gromov-Witten invariants under flops of threefolds \cite{LR}. We give a genus-1 formula of Theorem \ref{thm:blow-up} in Corollary \ref{cor:blow-up12}.

\subsection{Spaces involved}
Recall that we have a smooth log Calabi-Yau pair consisting of a smooth Fano surface $X$ with a smooth elliptic curve $E$, with $Z := \bP(K_X \oplus \cO_X)$. There are two distinguished sections $E_0, E_{\infty} \subset Z$ corresponding to the summands $\bP(0 \oplus \cO_X)$ and $\bP(K_X \oplus 0)$, respectively. Let $\pi:\widehat{X} \rightarrow X$ be the blow up at a single point of $X$, with exceptional curve $C.$ Define $\widehat{Z} := \bP(K_{\widehat{X}} \oplus \cO_{\widehat{X}}).$ Let $p \in E_{\infty}$, and $L \cong \bP^1 \subset Z$ be the unique fiber passing through $p$. Define $W := Bl_p Z$ to be the blow up at $p$ of $Z$, with the map $\pi_1: W \rightarrow Z$. Let $\tilde{L}$ be the strict transform of $L$ under $\pi_1$. We see that $\tilde{L}$ is a smooth rational curve with normal bundle $\cO_{\bP^1}(-1) \oplus \cO_{\bP^1}(-1)$.

\subsection{The invariants}

We relate 1-pointed Gromov-Witten invariants $N_{g,1}(Z)$ to unmarked invariants of $W$ via the intermediate space $\widehat{Z}.$ Recall that $N_{g,0}(K_{\widehat{X}}, \pi^*\beta-C)$ is the genus-0, local Gromov-Witten invariant of $\widehat{X}$ in curve class $\pi^*\beta-C$ \cite{CKYZ}. Let $\barM_{g,0}(W, \beta+\tilde{L})$ be the moduli space of genus-$g$ stable maps to $W$ in curve class $\beta+\tilde{L}$. Since $c_1(TW)(\beta) = c_1(TW)(\tilde{L}) = 0$, its virtual dimension is 0. We define,

\[
N_{g,0}(W, \beta+\tilde{L}) := \int_{[\barM_{g,0}(W, \beta+\tilde{L})]^{vir}} 1
\]

\subsection{Flop invariance}

Flops of 3-folds are birational transformations that are compositions of blow ups and blow downs along a rational curve with normal bundle $\cO_{\bP^1}(-1) \oplus \cO_{\bP^1}(-1)$, i.e. an exceptional curve. For $X = \bP^2$, refer to Figure \ref{fig:flop} for a flop between $\widehat{Z}$ and $W$ described in their toric fans. The following theorem states that Gromov-Witten invariants are functorial with respect to flops,

\begin{theorem}
\label{thm:flop}
(\cite{LR}) For a simple flop $\varphi: X \dashedrightarrow Y$ between threefolds, if $\beta$ is not a multiple of an exceptional curve, then for all $g \geq 0$,

\[
\int_{[\barM_{g,n}(X, \beta)]^{vir}} \prod_{i=1}^n ev_i^* \varphi^* \gamma_i = \int_{[\barM_{g,n}(Y, \varphi(\beta))]^{vir}} \prod_{i=1}^n ev_i^* \gamma_i
\]
where $\gamma_i \in H^*(Y).$
\end{theorem}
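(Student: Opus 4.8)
This is the main theorem of \cite{LR}; in practice one simply invokes it, but I indicate the strategy one would follow to establish it. The plan is to factor the flop through a common blow-up and then reduce the claimed identity to a local computation near the flopping curve.

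First I would realize $\varphi$ as a blow-up followed by a blow-down. If $\ell\cong\bP^1\subset X$ is the flopping curve, with $N_{\ell/X}\cong\cO_{\bP^1}(-1)^{\oplus 2}$, let $\rho\colon W'\to X$ be the blow-up of $X$ along $\ell$; its exceptional divisor $D$ is isomorphic to $\bP^1\times\bP^1$, and contracting $D$ along the other ruling produces $\sigma\colon W'\to Y$ with $\varphi=\sigma\circ\rho^{-1}$. The normal bundle of $D$ restricts to $\cO(-1,-1)$ on each ruling, so the two contractions have the same local model, namely the resolved conifold.

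Next I would apply the degeneration formula to the deformation to the normal cone of $\ell\subset X$ (and likewise of the flopped curve $\ell'\subset Y$), writing each side of the asserted equality as a sum of products of relative Gromov--Witten invariants: a ``global'' factor living in $\barM$ of $W'$ relative to $D$ carrying the insertions pulled back from $X$ or $Y$, and a ``local'' factor which is a relative invariant of $\bP(\cO_{\bP^1}(-1)^{\oplus 2}\oplus\cO_{\bP^1})\to\bP^1$ relative to its infinity section. Because $\beta$ is not a multiple of the exceptional curve, a curve of class $\beta$ meets the flopping locus only in finitely many points, away from which $\rho$ and $\sigma$ are isomorphisms; thus the global factors, together with the matching of the insertions $\varphi^*\gamma_i$ with $\gamma_i$, are identified on the two sides essentially by definition.

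The substantive step, and the main obstacle, is to match the local relative contributions. These are governed by the Aspinwall--Morrison multiple-cover series $\sum_{k\ge 1}k^{-3}Q_\ell^{k}$ of the resolved conifold, and the flop sends the exceptional class $[\ell]$ to $-[\ell']$; one must check that, after identifying $Q_\ell$ with $Q_{\ell'}^{-1}$, every exceptional contribution appearing in the degeneration sum on the $X$-side has an equal counterpart on the $Y$-side. In \cite{LR} this is carried out by a symplectic deformation argument --- deforming the K\"ahler form across the flopping wall and showing the only curves responsible for any change are multiple covers of the exceptional $\bP^1$, whose contributions match by the symmetry of the Aspinwall--Morrison series --- and the hypothesis that $\beta$ is not a multiple of the exceptional curve guarantees that only finitely many such contributions enter for each fixed $\beta$, so the identity already holds at the level of numerical invariants with no convergence issue. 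Alternatively, the degeneration-formula argument sketched above can be pushed through algebraically, as in later treatments of ordinary flops.
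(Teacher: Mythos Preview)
The paper does not prove this statement at all: it is quoted verbatim as a result of Li--Ruan \cite{LR} and then immediately applied in Lemma~\ref{lem:blow-up1}. So there is no ``paper's own proof'' to compare against; your instinct in the first line---``in practice one simply invokes it''---is exactly what the paper does.

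Your sketch is a reasonable high-level summary of the ideas behind \cite{LR}, but a couple of points are worth flagging if you intend it as more than a pointer to the literature. First, Li--Ruan work in the symplectic category throughout: their argument is built on symplectic relative invariants and a symplectic gluing formula developed in that same paper, not on the algebraic degeneration formula you describe first; the ``alternatively'' you mention at the end is really a separate later proof (e.g.\ via \cite{Li} or the motivic/Hall-algebra treatments). Second, the matching of local contributions in \cite{LR} is not literally done by the Aspinwall--Morrison series symmetry under $Q_\ell\mapsto Q_{\ell'}^{-1}$; rather they show directly that the relative invariants of the local model on the two sides agree, which is a stronger statement than matching generating functions after analytic continuation. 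As written, your paragraph reads as a plausible narrative but would not serve as a proof without substantial expansion of the ``substantive step.''
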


\begin{lemma}
\label{lem:blow-up1}
    For all $g \geq 0$, 
    
    \[
    N_{g,0}(W, \beta+\tilde{L}) = N_{g,0}(\widehat{Z}, \pi^*\beta - C)
    \]
    
    \begin{proof}
    From Proposition 3.1, \cite{LLW}, there exists a flop $\varphi: W \dashrightarrow \widehat{Z}$ along an exceptional curve $\tilde{L} \subset W$ such that $\varphi(\tilde{L}) = -C$. Then apply Theorem \ref{thm:flop}. 
    \end{proof}
\end{lemma}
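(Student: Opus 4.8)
The plan is to reduce the claimed identity $N_{g,0}(W,\beta+\tilde L)=N_{g,0}(\widehat Z,\pi^*\beta-C)$ to a single application of the flop-invariance theorem (Theorem~\ref{thm:flop}) together with an identification of the flopped curve class. First I would invoke Proposition~3.1 of \cite{LLW}: the blow up $W=Bl_pZ$ of $Z$ along a point on its infinity section contains the strict transform $\tilde L$ of the fiber $L$ through $p$, which is a $(-1,-1)$-curve, and contracting $\tilde L$ and re-blowing up gives precisely $\widehat Z=\bP(K_{\widehat X}\oplus\cO_{\widehat X})$; this yields a simple flop $\varphi\colon W\dashrightarrow\widehat Z$. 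The key geometric input to record is what $\varphi$ does to curve classes: the flop is an isomorphism away from $\tilde L$, it sends the class $\tilde L$ to the class $-C$ (the exceptional curve of $\widehat X\to X$, viewed inside $\widehat Z$ as a curve in the zero-section), and it identifies the lift of $\beta\in NE(X)$ on the $W$ side with $\pi^*\beta$ on the $\widehat Z$ side. Hence $\varphi_*(\beta+\tilde L)=\pi^*\beta-C$.

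Next I would check that Theorem~\ref{thm:flop} actually applies, i.e.\ that $\beta+\tilde L$ is not a positive multiple of the exceptional curve $\tilde L$: this is immediate because $\beta$ is a nonzero effective class pulled back from $X$ (and in any case the only case one cares about has $\beta\neq 0$; if $\beta=0$ the statement is vacuous or handled separately by the standard multiple-cover contribution of a $(-1,-1)$-curve, which agrees on both sides). With no marked points there are no insertions, so Theorem~\ref{thm:flop} with $n=0$ gives directly
\[
\int_{[\barM_{g,0}(W,\beta+\tilde L)]^{vir}}1=\int_{[\barM_{g,0}(\widehat Z,\varphi_*(\beta+\tilde L))]^{vir}}1=\int_{[\barM_{g,0}(\widehat Z,\pi^*\beta-C)]^{vir}}1,
\]
which is exactly $N_{g,0}(W,\beta+\tilde L)=N_{g,0}(\widehat Z,\pi^*\beta-C)$ for all $g\ge 0$.

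The only genuinely non-routine point is the curve-class bookkeeping across the flop, i.e.\ verifying $\varphi_*(\beta+\tilde L)=\pi^*\beta-C$ and that this is not a multiple of an exceptional curve; everything else is a black-box citation. I would make this precise by working in the toric picture when $X=\bP^2$ (as in Figure~\ref{fig:flop}) — reading off the fans of $\widehat Z$ and $W$ and tracking the wall-crossing — and then note that the class identification is insensitive to toricity since $\varphi$ is an isomorphism in codimension one and $H_2$ of both threefolds splits as (classes from the surface) $\oplus\,\ZZ\langle\text{fiber}\rangle$ with the flop acting as $+1$ on the surface part and swapping the two extremal rays coming from $\tilde L$ and $-C$. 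This is precisely the content extracted from Proposition~3.1 of \cite{LLW}, so in the write-up it suffices to cite that proposition for the existence of the flop and the value $\varphi(\tilde L)=-C$, and then apply Theorem~\ref{thm:flop}.
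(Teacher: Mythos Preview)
Your proposal is correct and follows essentially the same approach as the paper: cite Proposition~3.1 of \cite{LLW} for the existence of the flop $\varphi\colon W\dashrightarrow \widehat Z$ with $\varphi(\tilde L)=-C$, then apply Theorem~\ref{thm:flop}. Your additional remarks on the curve-class bookkeeping and the non-exceptionality of $\beta+\tilde L$ simply spell out details that the paper leaves implicit.
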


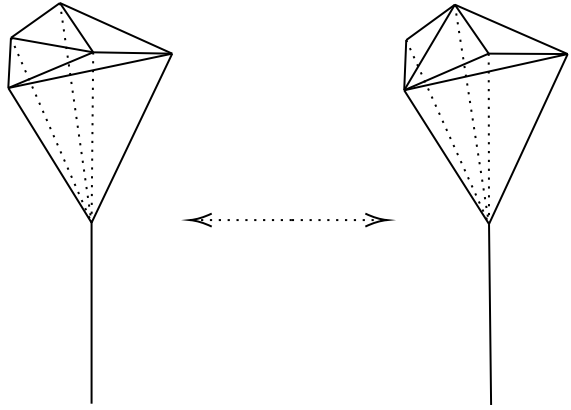
\begin{figure}[h!]
    \centering
    \tikzset{every picture/.style={line width=0.75pt}} 

\begin{tikzpicture}[x=0.75pt,y=0.75pt,yscale=-1,xscale=1]

\draw    (201.33,101.67) -- (244.16,83.65) ;
\draw    (244.16,83.65) -- (284,84.33) ;
\draw    (244.16,83.65) -- (227.38,58.76) ;
\draw    (201.33,101.67) -- (284,84.33) ;
\draw    (227.38,58.76) -- (284,84.33) ;
\draw  [dash pattern={on 0.84pt off 2.51pt}]  (244.16,83.65) -- (243.33,169.67) ;
\draw  [dash pattern={on 0.84pt off 2.51pt}]  (227.38,58.76) -- (243.33,169.67) ;
\draw    (201.33,101.67) -- (243.33,169.67) ;
\draw    (284,84.33) -- (243.33,169.67) ;
\draw    (243.33,169.67) -- (243.33,261) ;
\draw    (202.67,76.33) -- (244.16,83.65) ;
\draw    (202.67,76.33) -- (227.38,58.76) ;
\draw    (201.33,101.67) -- (202.67,76.33) ;
\draw  [dash pattern={on 0.84pt off 2.51pt}]  (202.67,76.33) -- (243.33,169.67) ;
\draw    (401,102.81) -- (443.62,84.31) ;
\draw    (443.62,84.31) -- (483.46,84.53) ;
\draw    (443.62,84.31) -- (426.55,59.61) ;
\draw    (401,102.81) -- (483.46,84.53) ;
\draw    (426.55,59.61) -- (483.46,84.53) ;
\draw  [dash pattern={on 0.84pt off 2.51pt}]  (443.62,84.31) -- (443.78,170.33) ;
\draw  [dash pattern={on 0.84pt off 2.51pt}]  (426.55,59.61) -- (443.78,170.33) ;
\draw    (401,102.81) -- (443.78,170.33) ;
\draw    (483.46,84.53) -- (443.78,170.33) ;
\draw    (443.78,170.33) -- (444.83,261.65) ;
\draw    (401,102.81) -- (426.55,59.61) ;
\draw    (402.04,77.47) -- (426.55,59.61) ;
\draw    (401,102.81) -- (402.04,77.47) ;
\draw  [dash pattern={on 0.84pt off 2.51pt}]  (402.04,77.47) -- (443.78,170.33) ;
\draw  [dash pattern={on 0.84pt off 2.51pt}]  (344.33,168.33) -- (390.33,168.33) ;
\draw [shift={(392.33,168.33)}, rotate = 180] [color={rgb, 255:red, 0; green, 0; blue, 0 }  ][line width=0.75]    (10.93,-3.29) .. controls (6.95,-1.4) and (3.31,-0.3) .. (0,0) .. controls (3.31,0.3) and (6.95,1.4) .. (10.93,3.29)   ;
\draw  [dash pattern={on 0.84pt off 2.51pt}]  (344.33,168.33) -- (295,168.33) ;
\draw [shift={(293,168.33)}, rotate = 360] [color={rgb, 255:red, 0; green, 0; blue, 0 }  ][line width=0.75]    (10.93,-3.29) .. controls (6.95,-1.4) and (3.31,-0.3) .. (0,0) .. controls (3.31,0.3) and (6.95,1.4) .. (10.93,3.29);
\end{tikzpicture}
    \caption{Flop between $\widehat{Z} = \bP(K_{\FF_1}\oplus\cO_{\FF_1})$ and $W = Bl_p \bP(K_{\bP^2}\oplus \cO_{\bP^2})$, whose toric fans (completed to convex fans) are the left and right, respectively.}
    \label{fig:flop}
\end{figure}

\subsection{Proof of Theorem \ref{thm:main_blow-up}}

By blowing up at a point, we effectively rid of the single point constraint defining $N_{g,1}(Z, \beta+h)$ (\ref{eq:def_of_invariant}) by equating them with invariants of $W = Bl_p Z$ with one less point constraint. We first have the following lemma,
    
\begin{lemma}
\label{lem:blow-up2}
    For all $g \geq 0$, we have that $N_{g,0}(K_{\widehat{X}}, \pi^*\beta - C) = N_{g,0}(\widehat{Z}, \pi^*\beta - C)$.
        \begin{proof}
        Under the $\mathbb{C}^*$-action that scales the $\bP^1$-fiber, the fixed point set of $\barM_{g,0}(\widehat{Z}, \pi^*\beta-C)$ is isomorphic to $\barM_{g,0}(\widehat{X}, \pi^*\beta-C)$. By virtual localization, the two invariants are equal (see Proposition 2.2, \cite{KM}).
        \end{proof}
\end{lemma}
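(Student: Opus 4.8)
The plan is to prove this by equivariant localization with respect to the fiberwise $\mathbb{C}^*$-action on $\widehat{Z} = \bP(K_{\widehat{X}} \oplus \cO_{\widehat{X}})$ that scales the $K_{\widehat{X}}$-summand, following the template of Proposition 2.2 of \cite{KM}. First I would record that $\widehat{Z} \to \widehat{X}$ is a $\bP^1$-bundle with two $\mathbb{C}^*$-fixed sections: the zero section $\widehat{X}_0 \cong \widehat{X}$ (which lies in the open total space of $K_{\widehat{X}}$, away from the divisor at infinity) and the infinity section $\widehat{X}_\infty \cong \widehat{X}$, whose normal bundles in $\widehat{Z}$ are $K_{\widehat{X}}$ and $K_{\widehat{X}}^{\vee}$ respectively and carry opposite nonzero weights. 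Since $\iota_0^{*}c_1(T\widehat{Z}) = -K_{\widehat{X}} + c_1(K_{\widehat{X}}) = 0$, the class $\iota_{0*}(\pi^*\beta - C)$ pairs to zero with $c_1(T\widehat{Z})$, so indeed $\vdim \barM_{g,0}(\widehat{Z},\pi^*\beta-C) = 0$ and $N_{g,0}(\widehat{Z},\pi^*\beta-C)$ is a number.

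Next I would identify the $\mathbb{C}^*$-fixed locus of the moduli space. Because $\pi^*\beta - C$ is the image of a curve class on $\widehat{X}$ under a section and contains no $\bP^1$-fiber component, any $\mathbb{C}^*$-fixed stable map in class $\pi^*\beta - C$ has no fiber components and hence maps into $\widehat{X}_0 \cup \widehat{X}_\infty$; by connectedness of its image it maps into a single section. Thus
\[
\barM_{g,0}(\widehat{Z},\pi^*\beta-C)^{\mathbb{C}^*} \;\cong\; \barM_{g,0}(\widehat{X},\pi^*\beta-C) \;\sqcup\; \barM_{g,0}(\widehat{X},\pi^*\beta-C),
\]
one copy from each section, with the fixed part of the obstruction theory of $\widehat{Z}$ restricting on each to that of $\barM_{g,0}(\widehat{X},\pi^*\beta-C)$. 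Then I would invoke Graber--Pandharipande virtual localization: as $\widehat{Z}$ is proper and the virtual dimension is $0$, the number $N_{g,0}(\widehat{Z},\pi^*\beta-C)$ equals the ($t$-independent) sum over the two fixed loci of the integrals of the inverse equivariant Euler classes of their virtual normal bundles.

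The remaining work is to evaluate the two contributions. Writing $e := \beta\cdot E$ (the cases $e \le 1$ being vacuous, since either $\beta = 0$ or the overall factor $(\beta\cdot E - 1)$ appearing in Proposition \ref{prop:deg} and downstream already kills the term), on the zero section the moving part of the virtual normal bundle is $R^{\bullet}\pi_{*}\,ev^{*}K_{\widehat{X}}$ with a single weight $t$. From $-K_{\widehat{X}}\cdot(\pi^*\beta-C) = e-1 \ge 1$ one gets $R^{0}\pi_{*}\,ev^{*}K_{\widehat{X}} = 0$, so $R^{1}\pi_{*}\,ev^{*}K_{\widehat{X}}$ is a bundle of rank $g + e - 2 = \vdim \barM_{g,0}(\widehat{X},\pi^*\beta-C)$, and the inverse equivariant Euler class $e_{\mathbb{C}^{*}}\!\big(R^{1}\pi_{*}\,ev^{*}K_{\widehat{X}}\otimes t\big) = \sum_{i} c_{i}\, t^{(g+e-2)-i}$ contributes, after integration, only its top Chern class, giving exactly $N_{g,0}(K_{\widehat{X}},\pi^*\beta-C)$. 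On the infinity section the analogous contribution is a ratio of equivariant Euler classes of $R^{\bullet}\pi_{*}\,ev^{*}K_{\widehat{X}}^{\vee}$, a rational function of $t$ which a Riemann--Roch count shows is homogeneous of degree $2 - 2e < 0$; since the total and the zero-section piece are both $t$-independent, the infinity piece must be a constant, hence vanishes. Assembling these yields $N_{g,0}(\widehat{Z},\pi^*\beta-C) = N_{g,0}(K_{\widehat{X}},\pi^*\beta-C)$.

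The hard part will be the vanishing of the infinity-section contribution together with the vanishing $R^{0}\pi_{*}\,ev^{*}K_{\widehat{X}} = 0$ on the zero section; both hinge on the mild positivity $-K_{\widehat{X}}\cdot(\pi^*\beta-C) \ge 1$ and on matching the Riemann--Roch ranks with the virtual dimension. The one place genuinely requiring care is the behaviour on contracted components: there $ev^{*}K_{\widehat{X}}$ is trivial, and one must argue that the global $H^{0}$ still vanishes using the normalization sequence and the negativity of $K_{\widehat{X}}$ along the non-contracted backbone — this is already implicit in the definition of the local invariant of \cite{CKYZ}, which is the convention adopted throughout.
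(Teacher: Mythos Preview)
Your approach via virtual localization for the fiberwise $\mathbb{C}^*$-action is exactly the paper's, and your analysis of the zero-section contribution is correct. However, you misidentify the fixed locus: there is only \emph{one} component, coming from the zero section, not two. The two section embeddings $\iota_0,\iota_\infty:\widehat{X}\hookrightarrow\widehat{Z}$ do not induce the same map on $H_2$; one has
\[
\iota_{\infty*}\gamma \;=\; \iota_{0*}\gamma \;+\; \bigl(-K_{\widehat{X}}\cdot\gamma\bigr)\,[F]
\]
where $[F]$ is the fiber class (check by intersecting with the divisor $[\widehat{X}_0]$). For $\gamma=\pi^*\beta-C$ the discrepancy is $(e-1)[F]\neq 0$, so a stable map landing in $\widehat{X}_\infty$ never realizes the class $\iota_{0*}(\pi^*\beta-C)$ in $H_2(\widehat{Z})$. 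More generally, since $-K_{\widehat{X}}$ is nef on $\widehat{X}$, every effective contribution from the infinity section or from fiber components is non-negative in the $[F]$-direction, while the target class has zero $[F]$-component; hence all such components are excluded and the fixed locus is a single copy of $\barM_{g,0}(\widehat{X},\pi^*\beta-C)$.

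Your Riemann--Roch degree argument for the vanishing of the ``infinity piece'' is therefore moot: there is no infinity fixed locus to begin with, which is why the paper's proof is a two-line reference to \cite{KM}. The conclusion you reach is correct, but the route through a phantom second fixed component (and the attendant worry about $R^0\pi_*ev^*K_{\widehat{X}}^\vee$) is unnecessary.
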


\begin{theorem}[= Theorem \ref{thm:main_blow-up}]
\label{thm:blow-up}
    Let $W := Bl_p Z$ be the blow up of $Z$ at a point $p$ on its infinity section. 
    
    \[
    \sum_{\substack{g \geq 0, \\ \beta \in NE(X)}}N_{g,1}(Z, \beta+h)\hbar^{2g}Q^{\beta} =\sum_{\substack{g \geq 0, \\ \beta \in NE(X)}}\left[c(g, \beta)N_{g,0}(W, \beta + \tilde{L})\hbar^{2g}Q^{\beta}\right] - \Delta
    \]
    where $c(g,\beta) \in \mathbb{Q}$ and the $\Delta$ are given in Theorem \ref{thm:main}. 
    \begin{proof}
    Applying the Gopakumar-Vafa formula to the right side of Theorem \ref{thm:main}, there exist constants $c(g,\beta) \in \mathbb{Q}$ such that,
    
    \begin{equation}
    \label{eq:blow-up1}
        \begin{split}
           \sum_{\substack{g \geq 0, \\ \beta \in NE(X)}}N_{g,1}(Z, \beta+h)\hbar^{2g}Q^{\beta} &= \sum_{\substack{g \geq 0, \\ \beta \in NE(X)}}c(g, \beta)N_{g,0}(K_{\widehat{X}}, \pi^*\beta-C)\hbar^{2g}Q^{\beta} - \Delta 
        \end{split}
    \end{equation}
    By Lemma \ref{lem:blow-up2}, Equation \ref{eq:blow-up1} becomes,
    
    \begin{equation}
    \label{eq:blow-up2}
        \begin{split}
           \sum_{\substack{g \geq 0, \\ \beta \in NE(X)}}N_{g,1}(Z, \beta+h)\hbar^{2g}Q^{\beta}&= \sum_{\substack{g \geq 0, \\ \beta \in NE(X)}}c(g, \beta)N_{g,0}(\widehat{Z}, \pi^*\beta-C)\hbar^{2g}Q^{\beta} - \Delta
        \end{split}
    \end{equation}
    By Lemma \ref{lem:blow-up1}, Equation \ref{eq:blow-up2} becomes,
    
    \begin{equation}
        \begin{split}
           \sum_{\substack{g \geq 0, \\ \beta \in NE(X)}}N_{g,1}(Z, \beta+h)\hbar^{2g}Q^{\beta} &= \sum_{\substack{g \geq 0, \\ \beta \in NE(X)}}c(g, \beta)N_{g,0}(W, \beta + \tilde{L})\hbar^{2g}Q^{\beta} - \Delta
        \end{split}
    \end{equation}
\end{proof}
\end{theorem}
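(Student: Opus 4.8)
The plan is to deduce Theorem~\ref{thm:blow-up} from Theorem~\ref{thm:main} by rewriting the Gopakumar--Vafa side in terms of honest Gromov--Witten invariants of the blow-up $W$. First I would undo the genus-$g$ Gopakumar--Vafa substitution used in the proof of Theorem~\ref{thm:main} (equivalently, start from Equation~\ref{eq:proof-of-oP1,5} rather than its Gopakumar--Vafa form): the factors $n_g(K_{\widehat{X}},\pi^*\beta-C)\left(2\sin\tfrac{\hbar}{2}\right)^{2g-2}$ reassemble, after adjusting the rational constants $c(g,\beta)$, into the local invariants $N_{g,0}(K_{\widehat{X}},\pi^*\beta-C)$, leaving the discrepancy $\Delta$ unchanged. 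This produces
\[
\sum_{g,\beta}N_{g,1}(Z,\beta+h)\hbar^{2g}Q^\beta=\sum_{g,\beta}c(g,\beta)N_{g,0}(K_{\widehat{X}},\pi^*\beta-C)\hbar^{2g}Q^\beta-\Delta .
\]

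Second, I would replace $K_{\widehat{X}}$ by its projective compactification $\widehat{Z}=\bP(K_{\widehat{X}}\oplus\cO_{\widehat{X}})$. The $\mathbb{C}^*$-action scaling the $\bP^1$-fiber of $\widehat{Z}$ has fixed locus on $\barM_{g,0}(\widehat{Z},\pi^*\beta-C)$ equal to the zero-section moduli space $\barM_{g,0}(\widehat{X},\pi^*\beta-C)$, and virtual localization identifies the (zero-dimensional) integral with the local invariant, giving $N_{g,0}(\widehat{Z},\pi^*\beta-C)=N_{g,0}(K_{\widehat{X}},\pi^*\beta-C)$; this is Lemma~\ref{lem:blow-up2}, modelled on Proposition~2.2 of \cite{KM}. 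Third, I would use the simple flop $\varphi:W\dashrightarrow\widehat{Z}$ along the exceptional $\cO_{\bP^1}(-1)\oplus\cO_{\bP^1}(-1)$-curve $\tilde{L}\subset W$, with $\varphi(\tilde{L})=-C$ (Proposition~3.1 of \cite{LLW}): since $\beta+\tilde{L}$ is not a multiple of an exceptional curve, the flop invariance of threefold Gromov--Witten invariants (Theorem~\ref{thm:flop}, \cite{LR}) gives $N_{g,0}(W,\beta+\tilde{L})=N_{g,0}(\widehat{Z},\pi^*\beta-C)$, which is Lemma~\ref{lem:blow-up1}. Substituting the last two identities into the displayed equation yields the theorem, with the same $c(g,\beta)$ and $\Delta$ as in Theorem~\ref{thm:main}.

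I expect the substitutions to be routine; the main obstacle is the geometric input behind the flop step, namely checking that blowing up a point $p$ on the infinity section $E_\infty\subset Z$ really produces a threefold that is flop-equivalent to $\widehat{Z}$, with the curve classes matching as $\tilde{L}\leftrightarrow-C$ and $\beta\leftrightarrow\pi^*\beta$ under $\varphi$, and that $\tilde{L}$ indeed has normal bundle $\cO_{\bP^1}(-1)\oplus\cO_{\bP^1}(-1)$ so that the flop is simple. For $X=\bP^2$ this is visible on the toric fans (Figure~\ref{fig:flop}); in general it follows by analysing how $Bl_p$ interacts with the $\bP^1$-bundle structure of $Z$ over $X$, which I would take from \cite{LLW}. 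Once this is secured, the hypothesis that $\beta+\tilde{L}$ is non-exceptional is immediate since $\beta\in\NE(X)$ pushes forward nontrivially, so Theorem~\ref{thm:flop} applies directly.
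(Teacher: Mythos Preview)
Your proposal is correct and follows essentially the same route as the paper: undo the Gopakumar--Vafa formula to land at Equation~\ref{eq:proof-of-oP1,5}, then apply Lemma~\ref{lem:blow-up2} (virtual localization on $\widehat{Z}$) followed by Lemma~\ref{lem:blow-up1} (flop invariance via \cite{LR} and \cite{LLW}) to reach $N_{g,0}(W,\beta+\tilde{L})$. Your identification of the geometric input behind the flop step and the citations for it match the paper's exactly.
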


Theorem \ref{thm:blow-up} gives us blow up formula for invariants of projective bundles in all-genus. It also extends Theorem 1.1 of \cite{HHKQ} to include invariants defined by a single point constraint.

\subsection{Formula in genus-1}

\begin{corollary}
\label{cor:blow-up12}
In genus-1, Theorem \ref{thm:blow-up} is,

\[
\sum_{\beta \in NE(X)}N_{1,1}(Z, \beta+h)Q^{\beta} = \sum_{\beta \in NE(X)}\left[N_{1,0}(W, \beta + \tilde{L}) - \frac{1}{12}N_{0,0}(W, \beta + \tilde{L}) - \delta_1(\beta)\right]Q^{\beta}
\]
where $\delta_1(\beta)$ is defined in Equation \ref{eq:delta1}, Appendix.
\begin{proof}
We have,

\begin{align*}
    N_{1,1}(Z, \beta+h) &= n_{1,0}(K_{\widehat{X}}) - \delta_1(\beta) \\
    &= N_{1,0}(K_{\widehat{X}}, \pi^*\beta - C) - \frac{1}{12}N_{0,0}(K_{\widehat{X}}, \pi^*\beta - C) - \delta_1(\beta) \\
    &= N_{1,0}(\widehat{Z}, \pi^*\beta - C) - \frac{1}{12}N_{0,0}(\widehat{Z}, \pi^*\beta - C) - \delta_1(\beta) \\
    &= N_{1,0}(W, \beta+\tilde{L}) - \frac{1}{12}N_{0,0}(W, \beta+\tilde{L}) - \delta_1(\beta)
\end{align*}
where the 1st equality is Corollary \ref{cor:maing1}, the 2nd equality is the genus-1 Gopakumar-Vafa formula, the 3rd equality is Lemma \ref{lem:blow-up2}, and the 4th equality is Lemma \ref{lem:blow-up1}. Summing over all curve classes, we have the desired expression. 
\end{proof}
\end{corollary}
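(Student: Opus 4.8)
The plan is to specialize the all-genus blow up formula of Theorem \ref{thm:blow-up} (equivalently, to combine Corollary \ref{cor:maing1} with the two identification lemmas) to $g=1$, and then chase the resulting closed invariant through the chain of birational identifications relating $K_{\widehat{X}}$, $\widehat{Z}$, and $W$. First I would invoke Corollary \ref{cor:maing1}, which gives $N_{1,1}(Z,\beta+h) = n_1(K_{\widehat{X}},\pi^*\beta-C) - \delta_1(\beta)$; thus the entire content is to re-express the genus-1 Gopakumar--Vafa invariant $n_1(K_{\widehat{X}},\pi^*\beta-C)$ in terms of ordinary genus-0 and genus-1 Gromov--Witten invariants of $W$.

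Next I would apply the genus-1 closed Gopakumar--Vafa formula for Calabi--Yau threefolds to the class $\pi^*\beta-C$, namely $N_{1,0}(K_{\widehat{X}},\pi^*\beta-C) = n_1(K_{\widehat{X}},\pi^*\beta-C) + \tfrac{1}{12}\, n_0(K_{\widehat{X}},\pi^*\beta-C)$, together with the genus-0 formula $n_0(K_{\widehat{X}},\pi^*\beta-C) = N_{0,0}(K_{\widehat{X}},\pi^*\beta-C)$. Solving for $n_1$ gives $n_1(K_{\widehat{X}},\pi^*\beta-C) = N_{1,0}(K_{\widehat{X}},\pi^*\beta-C) - \tfrac{1}{12}\, N_{0,0}(K_{\widehat{X}},\pi^*\beta-C)$.

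Then I would transport these local invariants of $\widehat{X}$ to $W$ using the two lemmas already in hand: Lemma \ref{lem:blow-up2} identifies $N_{g,0}(K_{\widehat{X}},\pi^*\beta-C)$ with $N_{g,0}(\widehat{Z},\pi^*\beta-C)$ via $\mathbb{C}^*$-localization on the projective completion, and Lemma \ref{lem:blow-up1} identifies $N_{g,0}(\widehat{Z},\pi^*\beta-C)$ with $N_{g,0}(W,\beta+\tilde{L})$ via flop invariance of Gromov--Witten invariants (Theorem \ref{thm:flop}) for the flop $W \dashrightarrow \widehat{Z}$ along the exceptional curve $\tilde{L}$. Applying both with $g=0$ and $g=1$ turns the previous display into $n_1(K_{\widehat{X}},\pi^*\beta-C) = N_{1,0}(W,\beta+\tilde{L}) - \tfrac{1}{12}\, N_{0,0}(W,\beta+\tilde{L})$; substituting this into the identity from Corollary \ref{cor:maing1} and summing over $\beta \in NE(X)$ yields the claimed formula.

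There is no serious obstacle here: the statement is a pure specialization and every ingredient is already available. The only points requiring a little care are checking that the hypotheses of Theorem \ref{thm:flop} hold for every class $\beta+\tilde{L}$ with $\beta \in NE(X)$ (in particular that $\beta+\tilde{L}$ is never a multiple of an exceptional curve), and confirming that the discrepancy $\delta_1(\beta)$ of Corollary \ref{cor:maing1} is exactly the genus-1 part of the term $\Delta$ appearing in Theorem \ref{thm:blow-up}; both are routine.
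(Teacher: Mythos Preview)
Your proposal is correct and follows exactly the same route as the paper's proof: start from Corollary~\ref{cor:maing1}, invert the genus-1 Gopakumar--Vafa formula to rewrite $n_1(K_{\widehat{X}},\pi^*\beta-C)$ in terms of $N_{1,0}$ and $N_{0,0}$, then apply Lemma~\ref{lem:blow-up2} and Lemma~\ref{lem:blow-up1} in succession before summing over $\beta$. The extra checks you flag (hypotheses of the flop theorem, matching $\delta_1(\beta)$ with the genus-1 part of $\Delta$) are not even made explicit in the paper's argument.
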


\section{All-genus open-closed correspondence for projective bundles}

\label{sec:op}

For this section, let $X$ additionally be toric, and $K_X$ be the toric canonical bundle. We prove an all-genus open-closed correspondence for projective bundles on smooth log Calabi-Yau pairs. 

\begin{theorem}[= Theorem \ref{thm:intro_op}]
\label{thm:op}
    Suppose that $X$ is a toric Fano surface, with $Z = \bP(K_X \oplus \cO_X)$ and $K_X$ the toric canonical bundle. 
    
    \begin{multline*}
   \sum_{\substack{g \geq 0, \\ \beta \in NE(X)}} N_{g,1}(Z, \beta+h)\hbar^{2g}Q^{\beta} = \\\sum_{\substack{g \geq 0, \\ \beta \in NE(X)}}\left[(-1)^{g+1}c(g, \beta)n^{open}_{g}(K_X/L, \beta+\beta_0, 1)\left(2\sin\frac{\hbar}{2}\right)^{2g - 2}Q^{\beta}\right] - \Delta
    \end{multline*}
    where $c(g,\beta) \in \mathbb{Q}$ and $\Delta$ are as in Theorem \ref{thm:main}.
\end{theorem}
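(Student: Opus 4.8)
The plan is to obtain Theorem \ref{thm:op} as a direct consequence of Theorem \ref{thm:main} combined with the open--closed BPS identity recorded in Corollary \ref{cor:winding-1} (= Corollary 5.5 of \cite{GRZZ}). Since in this section $X$ is assumed toric Fano and $\pi\colon\widehat X\to X$ is the toric blow up at a torus-fixed point, the threefold $K_{\widehat X}$ is a toric Calabi--Yau, the brane $L\subset K_X$ is an outer Aganagic--Vafa brane, and the hypotheses of Corollary \ref{cor:winding-1} are met. That corollary supplies, for every $g\ge 0$ and every $\beta\in NE(X)$, the equality
\[
n_g\!\left(K_{\widehat X},\pi^*\beta-C\right)=(-1)^{g+1}\,n^{open}_g\!\left(K_X/L,\beta+\beta_0,1\right),
\]
where $\beta_0\in H_2(K_X,L)$ is the relative class of the basic holomorphic disc on $L$, chosen so that $\pi^*\beta-C$ corresponds to $\beta+\beta_0$ under the identification of curve classes used in \cite{GRZZ}.

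The steps I would carry out are as follows. First, invoke Theorem \ref{thm:main} in the form
\[
\sum_{g\ge 0,\ \beta\in NE(X)}N_{g,1}(Z,\beta+h)\hbar^{2g}Q^\beta
=\sum_{g\ge 0,\ \beta\in NE(X)}\left[c(g,\beta)\,n_g\!\left(K_{\widehat X},\pi^*\beta-C\right)\left(2\sin\frac{\hbar}{2}\right)^{2g-2}Q^\beta\right]-\Delta,
\]
with the same rational constants $c(g,\beta)$ of Equation \ref{eq:proof-of-oP1,5} and the same discrepancy $\Delta$ of Equation \ref{eq:delta_pl}. Second, substitute the identity of Corollary \ref{cor:winding-1} into each summand on the right, replacing $n_g(K_{\widehat X},\pi^*\beta-C)$ by $(-1)^{g+1}n^{open}_g(K_X/L,\beta+\beta_0,1)$; as this is an equality of coefficients for each fixed pair $(g,\beta)$, the substitution is legitimate at the level of the formal series in $\hbar$ and $Q$, and it affects neither $c(g,\beta)$ nor $\Delta$. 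Third, read off the resulting identity, which is exactly the assertion of Theorem \ref{thm:op}.

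The only genuine content beyond this bookkeeping lies in Corollary \ref{cor:winding-1} itself, which is proved in \cite{GRZZ} by matching the Gopakumar--Vafa expansion of the local Gromov--Witten theory of $K_{\widehat X}$ with the Mari\~no--Vafa expansion of the winding-$1$, framing-$0$ open Gromov--Witten theory of $L\subset K_X$. Within the present argument, the point requiring care is the consistency of conventions: one must verify that the effective class $\pi^*\beta-C$ on $\widehat X$ really does correspond to the relative class $\beta+\beta_0$ with winding $1$, and that the genus-dependent sign $(-1)^{g+1}$ is transported correctly through the multiple-cover formulas defining the BPS invariants on both sides (as in \cite{GV1} \cite{GV2} and \cite{MV}). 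Once these conventions are fixed compatibly with \cite{GRZZ}, the theorem follows formally, and no further moduli-theoretic input is needed beyond what was already used for Theorem \ref{thm:main}.
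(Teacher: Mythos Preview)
Your proposal is correct and follows exactly the same approach as the paper: the paper's proof is the one-liner ``Apply Corollary \ref{cor:winding-1} to Theorem \ref{thm:main},'' and your argument spells out precisely this substitution of $n_g(K_{\widehat X},\pi^*\beta-C)=(-1)^{g+1}n^{open}_g(K_X/L,\beta+\beta_0,1)$ into the right-hand side of Theorem \ref{thm:main}. Your additional remarks on conventions are reasonable commentary but not needed for the formal proof.
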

Theorem \ref{thm:op} relies on Corollary 5.5 of \cite{GRZZ}, which proves an equality between the genus-$g$, winding-1, 1-holed open-BPS invariant $n_g^{open}(K_X/L, \beta+\beta_0, 1)$ of an outer AV-brane in framing-0 and the genus-$g$, closed Gopakumar-Vafa invariant $n_g(K_{\widehat{X}}, \pi^*\beta-C)$ in class $\pi^*\beta-C$. 
\begin{corollary}[Corollary 5.5, \cite{GRZZ}]
\label{cor:winding-1}
    Suppose $X$ is toric, and $\pi:\widehat{X} \rightarrow X$ a toric blow up at a point with exceptional curve $C$. Let $K_{X}, K_{\widehat{X}}$ be the toric canonical bundles. 
    
    \[
    n_g(K_{\widehat{X}}, \pi^*\beta-C) = (-1)^{g+1}n_g^{open}(K_X/L, \beta+\beta_0, 1)
    \]
\end{corollary}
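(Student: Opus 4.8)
The plan is to prove Corollary \ref{cor:winding-1} by computing both sides with the topological vertex \cite{AKMV} and matching the two gluing rules, then passing from Gromov--Witten generating functions to BPS invariants through the respective multiple cover formulas. On the \emph{open} side, I would place the outer AV-brane $L$ on the relevant external leg of the toric web of $K_X$ and encode its holonomy by a variable $V$; the all-genus open Gromov--Witten partition function of $(K_X,L)$ in framing $0$ then has the vertex form $\sum_R Z_R(Q;\hbar)\,\mathrm{tr}_R V$, with $R$ a partition, $Q$ tracking $NE(X)$, $Z_R$ built from vertices and edge factors, and $q=e^{i\hbar}$. The winding-$1$ sector is the coefficient of $\mathrm{tr}_{\square}V$, giving a closed series $Z_{\square}(Q;\hbar)$ whose $Q^{\beta}$-coefficient equals, after the standard dictionary, the generating function of the $O_g(K_X/L,\beta+\beta_0,1)$ defined via stable relative maps \cite{FL}; this identification invokes the proof of the open topological vertex and the comparison of its output with the relative-maps definition.

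On the \emph{closed} side, I would compute the Gromov--Witten theory of $K_{\widehat X}$ by the topological vertex. The toric diagram of $K_{\widehat X}$ is that of $K_X$ with one extra edge inserted, namely the exceptional $(-1)$-curve $C$, and the classes $\pi^{*}\beta-C$ are exactly those of degree $1$ along that edge. In the vertex sum for $K_{\widehat X}$ the degree-$1$-in-$C$ part is carried by a single box propagating along the $C$-edge, so it collapses to the same $Z_{\square}(Q;\hbar)$, now dressed by the framing factor $f_{\square}(q)=(-1)^{|\square|}q^{\kappa_{\square}/2}=-1$ of the internal edge (as $\kappa_{\square}=0$) and by the single-box vertex factor $1/(q^{1/2}-q^{-1/2})$ from the new outer leg. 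Matching the two computations should yield an identity of the shape
\[
\sum_{g\ge 0}O_g(K_X/L,\beta+\beta_0,1)\,\hbar^{2g-1}= \varepsilon\cdot\Big(2\sin\tfrac{\hbar}{2}\Big)\sum_{g\ge 0}N_{g,0}(K_{\widehat X},\pi^{*}\beta-C)\,\hbar^{2g-2},
\]
where $\varepsilon$ is a sign to be pinned down and the single factor $2\sin(\hbar/2)$ is the vertex/gluing factor carried by the box insertion, accounting for the passage from the odd powers $\hbar^{2g-1}$ natural on the open side to the even powers $\hbar^{2g-2}$ natural on the closed side.

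I would then substitute the BPS expansions on both sides. On the closed side the Gopakumar--Vafa formula \cite{GV1} \cite{GV2} gives $\sum_{g}N_{g,0}(K_{\widehat X},\pi^{*}\beta-C)\hbar^{2g-2}=\sum_{g}n_g(K_{\widehat X},\pi^{*}\beta-C)(2\sin\tfrac{\hbar}{2})^{2g-2}$, using that $\pi^{*}\beta-C$ is primitive along $C$ so no $d\ge 2$ multiple-cover terms of this class occur. On the open side the Marino--Vafa / Ooguri--Vafa multiple cover formula for a one-holed winding-$1$ invariant \cite{MV} gives $\sum_{g}O_g(K_X/L,\beta+\beta_0,1)\hbar^{2g-1}=\sum_{g}n^{open}_g(K_X/L,\beta+\beta_0,1)(2\sin\tfrac{\hbar}{2})^{2g-1}$, with no disk multiple covers since the winding is $1$. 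Plugging both into the vertex identity and comparing coefficients of $(2\sin\tfrac{\hbar}{2})^{2g-1}Q^{\beta}$ gives $n^{open}_g(K_X/L,\beta+\beta_0,1)=\varepsilon_g\, n_g(K_{\widehat X},\pi^{*}\beta-C)$ for a sign $\varepsilon_g$.

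The \textbf{main obstacle} is pinning down $\varepsilon_g$ to exactly $(-1)^{g+1}$. Its genus-independent part is fixed by the framing factor $f_{\square}=-1$ of the $C$-edge together with the orientation and sign conventions for the open invariants of \cite{FL}; the genus-dependent factor $(-1)^{g}$ comes from the mismatch between the normalizations of the two multiple cover formulas (\cite{MV} versus \cite{GV1} \cite{GV2}) and the behaviour of the vertex factor $q^{1/2}-q^{-1/2}=2i\sin(\hbar/2)$ under the change of variables relating $\hbar$ to the AKMV string coupling. Keeping every factor of $i$, every framing factor, and every sign consistent through the vertex contraction is the crux; the explicit verification for $X=\bP^2$ in several degrees and all genus (Section 5.6 of \cite{GRZZ}) fixes the conventions and confirms the net sign is $(-1)^{g+1}$. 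An alternative would be to establish the genus-$0$ case from \cite{LLW} \cite{Cha} and upgrade by $\mathbb{C}^{*}$-localization, comparing $\barM(K_{\widehat X},\pi^{*}\beta-C)$ directly with the relative-maps moduli space computing $O_g$, but the vertex route handles all genera uniformly and is the natural one here.
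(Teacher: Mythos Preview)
Your approach is essentially the one the paper points to: the statement is not proved here but imported from \cite{GRZZ}, and the paper summarizes that proof as ``flop invariance and glueing formula of the Topological Vertex,'' which is exactly the vertex comparison you outline---match the winding-$1$ sector of the open partition function of $(K_X,L)$ against the degree-$1$-in-$C$ sector of the closed partition function of $K_{\widehat X}$, then strip off multiple covers on both sides using primitivity. Your identification of the sign as the genuine bookkeeping obstacle, to be nailed down by the low-degree checks in \cite{GRZZ}, also matches what the paper says.

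One small remark: the paper names \emph{flop invariance} of the vertex as an explicit ingredient, whereas you compare the two toric diagrams directly. These are really the same thing---the passage from an outer brane on an external leg of $K_X$ to an internal $(-1,-1)$-edge in $K_{\widehat X}$ is precisely the vertex manifestation of the flop used in \cite{LLW}---but it may be worth saying so, since invoking flop invariance packages the edge/framing factors cleanly and makes the overall sign $\varepsilon$ easier to track than chasing individual $q^{1/2}-q^{-1/2}$ contributions.
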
 
Corollary \ref{cor:winding-1} follows from flop invariance and glueing formula of the Topological Vertex \cite{AKMV}. A computational verification using the Topological Vertex for Corollary \ref{cor:winding-1} is provided in various degrees and all-genus in Section 5.5, \cite{GRZZ}. 

\begin{proof}[Proof of Theorem \ref{thm:op}]
    Apply Corollary \ref{cor:winding-1} to Theorem \ref{thm:main}.
\end{proof}

\section{Appendix}

\subsection{The $g>0$ log-local principle}
\label{sec:g>0_loglocal}

The higher genus log-local principle of \cite{BFGW} expresses the genus-$g$ maximal tangency, logarithmic Gromov-Witten invariants of $X(\log E)$ in terms of local Gromov-Witten invariants of $K_X$ and the stationary Gromov-Witten theory of the elliptic curve $E.$ In this section, we specialize the $g>0$ log-local principle to genus-1, and explain the terms $\Delta$
and $\delta_1(\beta)$ used in Theorem \ref{thm:main} and Corollary \ref{cor:maing1}, respectively.

\begin{theorem}[$g>0$ log-local principle of \cite{BFGW}]
\label{thm:g>0log-local}
    For every $g \geq 0$, we have that,
    
    \[
    F_g^{K_X} = (-1)^g F_g^{X/E} + \sum_{n\geq 0}\sum_{\substack{g = h+g_1+\ldots+g_n, \\ \mathbf{a} = (a_1,\ldots, a_n) \in \mathbb{Z}_{\geq 0}^n \\ (a_j, g_j) \neq (0,0), \sum_{j=1}^n a_j = 2h-2}}\frac{(-1)^{h-1}F_{h,\mathbf{a}}^E}{|Aut(\mathbf{a},\mathbf{g})|} \prod_{j=1}^n (-1)^{g_j-1}D^{a_j+2}F_{g_j}^{X/E}
    \]
    Equivalently, for a curve class $\beta$ satisfying $\beta\cdot E > 0$, we have,
    
    \begin{align*}
    N_{g}(K_X, \beta) &=  \frac{(-1)^{\beta\cdot E-1}}{\beta\cdot E}R_g(X(\log E), \beta) + \sum_{n \geq 0}\sum_{\substack{g = h+g_1+\ldots+g_n, \\ \mathbf{a} = (a_1,\ldots, a_n) \in \mathbb{Z}^n_{\geq 0}, \\ \beta = d_E [E] + \beta_1 + \ldots + \beta_n, \\ d_E \geq 0, \beta_j \cdot D > 0}}\biggl[\frac{(-1)^{g-1 + (E\cdot E)d_E}(E\cdot E)^m}{m!|Aut(\mathbf{a}, g)|} \\
    &N_{h, (\mathbf{a}, 1^m)}(E, d_E)\prod_{j=1}^n \left((-1)^{\beta_j \cdot E}(\beta_j \cdot E) R_{g_j, (\beta_j\cdot E)}(X, \beta_j)\right)\biggr]
    \end{align*}
where $m := 2g-2 - \sum_j a_j$, and $|Aut(\mathbf{a}, g)| = |Aut(a_1,g_1)|\ldots |Aut(a_n, g_n)|$ with $|Aut(a_i, g_i)|$ being the number of partitions of $a_i$ into $g_i$ boxes.
\end{theorem}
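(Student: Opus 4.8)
\textbf{Proof proposal for Theorem \ref{thm:g>0log-local}.}

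The plan is to invoke the main result of \cite{BFGW} essentially as a black box and then carry out the bookkeeping that converts the generating-function identity into the curve-class-by-curve-class form. First I would recall the precise set-up of \cite{BFGW}: one has the local Gromov-Witten potential $F_g^{K_X}$, the maximal-tangency logarithmic potential $F_g^{X/E}$, and the stationary descendant potential $F_{h,\mathbf{a}}^E$ of the elliptic curve, together with the degree-raising operator $D = \sum_\beta (\beta\cdot E)\, Q^\beta \tfrac{\partial}{\partial Q^\beta}$ (or $q\partial_q$ in the appropriate variable). The first displayed equation is then literally Theorem 1.1 (or the reformulation in \S1) of \cite{BFGW}, applied in our geometric situation where $X$ is a smooth Fano surface and $E$ a smooth anticanonical elliptic curve, so that $(X,E)$ is a smooth log Calabi-Yau pair; one only needs to check that the hypotheses of loc.\ cit.\ (smoothness of $E$, $E \in |-K_X|$, the semipositivity needed so that all potentials are well-defined) are met, which is immediate.

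The substance of the argument is the translation of the potential identity into the second display. Here I would extract the coefficient of $Q^\beta$ (tracking the genus weight $\hbar^{2g}$ or $u^{2g-2}$) on both sides. The term $(-1)^g F_g^{X/E}$ contributes $\tfrac{(-1)^{\beta\cdot E - 1}}{\beta\cdot E} R_g(X(\log E),\beta)$ after unwinding the normalization in which the log potential is recorded (the factor $\tfrac{1}{\beta\cdot E}$ and the sign $(-1)^{\beta\cdot E}$ come from the standard comparison between the maximal-tangency log invariant $R_g$ and the coefficient appearing in $F_g^{X/E}$, cf.\ the genus-$0$ statements of \cite{vGGR}). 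For the sum over $n$, each operator $D^{a_j+2}$ acting on $F_{g_j}^{X/E}$ pulls out a factor $(\beta_j\cdot E)^{a_j+2}$ on the class $\beta_j$; combining this with the $\tfrac{1}{\beta_j\cdot E}$ and sign from $F_{g_j}^{X/E}$ produces the factor $(-1)^{\beta_j\cdot E}(\beta_j\cdot E) R_{g_j,(\beta_j\cdot E)}(X,\beta_j)$ together with the appropriate power of $\beta_j\cdot E$ that gets absorbed into $\mathbf{a}$. The stationary factor $F_{h,\mathbf{a}}^E$ evaluated in degree $d_E$ gives $N_{h,(\mathbf{a},1^m)}(E,d_E)$ once one records that the descendant insertions $\tau_{a_j}$ are supplemented by $m := 2g-2-\sum_j a_j$ point insertions $\tau_0(\mathrm{pt}) = \tau_0$ forced by the dimension/degree constraint, each carrying a factor $E\cdot E$ (the self-intersection appears because the normal bundle of $E$ in $X$ is $\mathcal O_E(E)$ of degree $E\cdot E$), explaining the $(E\cdot E)^m$ and the $\tfrac{1}{m!}$ from unordered point insertions. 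The automorphism factor $|\mathrm{Aut}(\mathbf{a},\mathbf{g})|$ is unchanged, and $|\mathrm{Aut}(\mathbf{a},g)| = \prod_i |\mathrm{Aut}(a_i,g_i)|$ records the overcounting when several $(a_i,g_i)$ coincide, with $|\mathrm{Aut}(a_i,g_i)|$ interpreted as the number of partitions of $a_i$ into $g_i$ parts exactly as in the decomposition formula of \cite{BFGW}.

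The main obstacle I anticipate is not any deep geometry — that is entirely supplied by \cite{BFGW} — but rather getting every sign and combinatorial prefactor to match the conventions used elsewhere in this paper (in particular the normalization of $R_g$ via the Cartesian-diagram $\lambda_g$ comparisons of \S\ref{sec:VertexA}, and the sign convention $q = e^{i\hbar}$, $(2\sin\tfrac\hbar2)^{2g-2}$ used in the Gopakumar-Vafa formulas). Concretely one must verify: (i) the $(-1)^{\beta\cdot E}$ versus $(-1)^{\beta\cdot E - 1}$ discrepancy between the $F_g^{X/E}$ term and the $R_{g_j,(\beta_j\cdot E)}$ factors is consistent with the $(-1)^{g-1+(E\cdot E)d_E}$ global sign; (ii) the identity $m = 2g-2-\sum_j a_j$ is forced by the genus additivity $g = h + \sum g_j$ together with the stability/dimension constraint $\sum a_j = 2h-2$ from the first display, so that the two forms of the theorem are genuinely equivalent; and (iii) the degree constraint $\beta = d_E[E] + \sum \beta_j$ with $\beta_j\cdot E > 0$ and $d_E \ge 0$ is exactly the support of the operator expansion $\prod_j D^{a_j+2} F_{g_j}^{X/E}$ paired against $F_{h,\mathbf{a}}^E$. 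Once these are checked the equivalence of the two displays is a formal manipulation, so I would present the proof as: quote \cite{BFGW} for the first display, then devote the remainder to the coefficient extraction establishing the second.
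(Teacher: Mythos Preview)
Your proposal is correct and matches the paper's treatment: the paper does not give an independent proof but simply records the theorem as the result of \cite{BFGW} (hence the attribution in the theorem title), with the second display being the coefficient-by-coefficient unpacking of the first. Your sketch of the coefficient extraction is more detailed than what the paper actually writes, but the approach is the same.
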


The stationary invariants $N_{h, (\textbf{a}, 1^m)}(E, d_E)$ of the elliptic curve for $\textbf{a} \in \mathbb{Z}^n_{\geq 0}$ are defined as,

\begin{align*}
   N_{h, (\textbf{a}, 1^m)}(E, d_E) := \int_{[\barM_{h,n+m}(E, d_E)]^{vir}}\prod_{i=1}^n ev_i^*[pt] \psi_i^{a_i} \prod_{j=1}^m ev_j^*[pt] \psi_j 
\end{align*}
where $\psi_j \in H^2(\barM_{h,n+m}(E, d_E))$ is the $\psi-$insertion at the $j$-th marked point, and $[pt] \in H^2(E)$ is the Poincare-dual of a point on $E.$

\subsubsection{Genus-1}

We specialize Theorem \ref{thm:g>0log-local} to genus-1. The genus-1 generating series are,

\begin{align*}
    F_1^{K_X} &:= \left(\frac{1-\delta_{(E\cdot E), 0}\chi(X)}{(E\cdot E)24} - \frac{1}{24}\right)\log Q^E + \sum_{\beta | \beta\cdot E > 0}N_1(K_X, \beta) Q^{\beta} \\
    F_1^{X/E} &:= -\frac{1-\delta_{(E\cdot E), 0}\chi(X)}{(E\cdot E)24}\log Q^E + \sum_{\beta | \beta\cdot E > 0}\frac{(-1)^{\beta\cdot E}}{\beta\cdot E} R_1(X(\log D), \beta) Q^{\beta} \\
    F_{1,0^n}^E &:= \delta_{n,0}\frac{-1}{24}\log\left((-1)^{E\cdot E}\tilde{\mathcal{Q}}\right) + \sum_{d \geq 0}\tilde{\mathcal{Q}}^d\int_{[\overline{\mathcal{M}}_{1,n}(E,d)]^{vir}}\prod_{i=1}^n ev_i^*([pt])
\end{align*}
\begin{remark}
    The closed string symplectic parameter $Q$ keeps track of effective curve classes $\beta \in NE(X)$. It is related by mirror symmetry to the closed complex parameter $q$ on the stringy K\"ahler moduli space associated to $K_X.$ The variable $\tilde{\mathcal{Q}} = \tilde{\mathcal{Q}}(q)$ is related to $Q$ by the change of variables,

\begin{equation}
\label{eq:var_change}
  \tilde{\mathcal{Q}} = (-1)^{E\cdot E}Q^E\exp\left(\sum_{\beta|\beta\cdot E>0}(-1)^{\beta\cdot E}(\beta\cdot E)R_0(X(\log E),\beta)Q^{\beta}\right) = (-1)^{E\cdot E}\exp\left(-D^2 F_0^{K_X}\right)  
\end{equation}
\end{remark} 

In Theorem \ref{thm:g>0log-local}, if $h=1$, then $2h-2 = 0$ and $a_j = 0$ for all $j$. Because $(a_j, g_j) \neq (0,0)$, the only remaining term in the sum is $F_{1, 0^n}^E$. Hence, we have,

\begin{align*}
    F_1^{K_X} &= -F_1^{X/E} + \sum_{n \geq 0}F_{1,0^n}^E
\end{align*}
The virtual dimension of $\overline{\mathcal{M}}_{1,n}(E,d)$ is $n$, hence only the generating series $F_{1, \emptyset}^E$ corresponding to $n = 0$ appears,

\[
F_1^{K_X} = -F_1^{X/E} - \frac{1}{24}\log((-1)^{E\cdot E}\tilde{\mathcal{Q}}) + \sum_{d \geq 0}\tilde{\mathcal{Q}}^d \int_{\overline{\mathcal{M}}_{1,0}(E,d)}1
\]
The above stationary invariants are computed in \cite{Dji} and are given by,

\begin{align*}
    \sum_{d \geq 0}\tilde{\mathcal{Q}}^d \int_{\overline{\mathcal{M}}_{1,0}(E,d)}1  &= -\sum_{n \geq 1}\log(1- \tilde{\mathcal{Q}}^n) \\
    &= \sum_{n\geq 1}\sum_{k \geq 1}\frac{\tilde{\mathcal{Q}}^{nk}}{k} \\
    &= \sum_{n\geq 1}\left(\sum_{k|n, k \geq 1}\frac{1}{k}\right)\tilde{\mathcal{Q}}^n 
\end{align*}

Therefore, the $g=1$ log-local principle is,

\begin{align*}
    F_1^{K_X} &= -F_1^{X/E} - \frac{1}{24}\log\left((-1)^{E\cdot E}\tilde{\mathcal{Q}}\right) + \sum_{n\geq 1}\left(\sum_{k|n, k \geq 1}\frac{1}{k}\right)\tilde{\mathcal{Q}}^n \\
    &= -F_1^{X/E} - \frac{1}{24}\log (-1)^{E\cdot E}Q^E + \frac{1}{24}\sum_{\beta|\beta\cdot E > 0}(-1)^{\beta\cdot E+1}(\beta\cdot E)R_0(X(\log E),\beta)Q^{\beta} \\
    &+ \sum_{n \geq 1}\left(\sum_{j|n, j \geq 1}\frac{1}{j}\right)Q^{nE}\exp\left(n\sum_{\beta'|\beta'\cdot E > 0}(-1)^{\beta'\cdot E}(\beta'\cdot E)R_0(X(\log E),\beta')Q^{\beta'}\right)
\end{align*}
where we changed variables $\tilde{\mathcal{Q}} \leftrightarrow Q$ in the second equality. Let $[\bullet]_{Q^{\beta}}$ return the coefficient of $Q^{\beta}$ for an expression $\bullet$. We define the term,

\begin{equation}
    \label{eq:delta1}
    \delta_1(\beta) := \left[\sum_{n \geq 1}\left(\sum_{j|n, j \geq 1}\frac{1}{j}\right)Q^{nE}\exp\left[n\sum_{\beta'|\beta'\cdot E > 0}(-1)^{\beta'\cdot E}(\beta'\cdot E)R_0(X(\log E), \beta')Q^{\beta'}\right]\right]_{Q^{\beta}}
\end{equation}

Then, the individual Gromov-Witten invariants are related as,

\begin{theorem}[Genus-1 log-local]
\begin{align*}
    N_1(K_X, \beta) &= \frac{(-1)^{\beta\cdot E + 1}}{\beta\cdot E}R_{1, (\beta\cdot E)}(X, \beta) - \frac{1}{24}\left((-1)^{E\cdot E}-1\right)\log Q^E \\
    &+ \frac{1}{24}(-1)^{\beta\cdot E + 1}(\beta\cdot E)R_0(X(\log E), \beta) + \delta_1(\beta)
\end{align*}  
\end{theorem}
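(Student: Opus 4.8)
The plan is to extract the statement directly from the genus-$1$ specialization of the $g>0$ log-local principle (Theorem \ref{thm:g>0log-local}), which was already assembled in the preceding paragraphs of the Appendix. First I would recall the identity derived just above, namely
\[
F_1^{K_X} = -F_1^{X/E} - \frac{1}{24}\log\left((-1)^{E\cdot E}Q^E\right) + \frac{1}{24}\sum_{\beta|\beta\cdot E > 0}(-1)^{\beta\cdot E+1}(\beta\cdot E)R_0(X(\log E),\beta)Q^{\beta} + \sum_{n \geq 1}\left(\sum_{j|n}\frac{1}{j}\right)Q^{nE}\exp\left(n\sum_{\beta'|\beta'\cdot E>0}(-1)^{\beta'\cdot E}(\beta'\cdot E)R_0(X(\log E),\beta')Q^{\beta'}\right),
\]
which holds as an identity of generating series in the variables $Q^{\beta}$.

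The proof is then simply a matter of reading off the coefficient of $Q^{\beta}$ on both sides for a fixed effective curve class $\beta$ with $\beta\cdot E>0$. On the left, the definition of $F_1^{K_X}$ gives coefficient $N_1(K_X,\beta)$ (the $\log Q^E$ term only contributes the explicit additive constant $\left(\frac{1-\delta_{(E\cdot E),0}\chi(X)}{(E\cdot E)24}-\frac{1}{24}\right)$ when $\beta$ is a multiple of $[E]$, but for the stated formula one keeps it symbolically). On the right, the coefficient of $Q^{\beta}$ in $-F_1^{X/E}$ is $\frac{(-1)^{\beta\cdot E+1}}{\beta\cdot E}R_1(X(\log E),\beta)$ together with the $\log Q^E$ term; the coefficient in the third summand is $\frac{1}{24}(-1)^{\beta\cdot E+1}(\beta\cdot E)R_0(X(\log E),\beta)$; and the coefficient of $Q^{\beta}$ in the last summand is, by definition, exactly $\delta_1(\beta)$ as written in Equation \ref{eq:delta1}. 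Collecting these four contributions yields
\[
N_1(K_X,\beta) = \frac{(-1)^{\beta\cdot E+1}}{\beta\cdot E}R_{1,(\beta\cdot E)}(X,\beta) - \frac{1}{24}\left((-1)^{E\cdot E}-1\right)\log Q^E + \frac{1}{24}(-1)^{\beta\cdot E+1}(\beta\cdot E)R_0(X(\log E),\beta) + \delta_1(\beta),
\]
which is the asserted identity.

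There is essentially no obstacle here beyond bookkeeping: the one subtlety worth pointing out is the change of variables $\tilde{\mathcal Q}\leftrightarrow Q$ (Equation \ref{eq:var_change}), which has already been carried out in the displayed computation above, so that both sides are honestly power series in the $Q^{\beta}$; one should just check that the substitution is well-defined degree by degree, which holds because $\tilde{\mathcal Q} = (-1)^{E\cdot E}Q^E\cdot(1+\text{higher order})$ is invertible as a formal series. I would also remark that the $\log Q^E$ term should be interpreted formally (it cancels the corresponding term in $F_1^{X/E}$ except for the $(E\cdot E)$-dependent discrepancy shown), and that for $\beta$ not a multiple of $[E]$ this term simply does not appear. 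With that understood, the theorem follows immediately by comparing $Q^{\beta}$-coefficients.
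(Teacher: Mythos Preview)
Your proposal is correct and matches the paper's approach exactly: the paper derives the generating-series identity you quote, defines $\delta_1(\beta)$ as the $Q^{\beta}$-coefficient of the last summand, and then states the theorem with the words ``Then, the individual Gromov-Witten invariants are related as,'' without further argument. Your extraction of the $Q^{\beta}$-coefficient from each of the four pieces is precisely what the paper intends, and your remarks on the formal $\log Q^E$ term and the well-definedness of the $\tilde{\mathcal Q}\leftrightarrow Q$ substitution are apt clarifications of points the paper leaves implicit.
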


\begin{remark}
    The term $\delta_1(\beta)$ encapsulates contributions from the elliptic curve in the genus-1 log-local principle. For example, when $X = \FF_1$ given by the toric blow up $\pi: \FF_1 \rightarrow \bP^2$ with exceptional curve $C$, the values for $\delta_1(\pi^*dH-C)$ are $0,0,1,-35$ for $\beta = dH \in H_2(\bP^2, \mathbb{Z})$ for $d = 1,2,3,4$, respectively. 
\end{remark}

\subsection{Evaluation of Vertex $V_3$ in genus-1}

\label{sec:evaluation_vertexC}

We directly evaluate the genus-1 invariant associated to Vertex $V_3$ (\ref{eq:vertexC_invariant}) using a calculation from Appendix A, \cite{Bou}. The genus-1 invariant is,

    \[
    \int_{[\overline{\mathcal{M}}_{1,2}(\FF_1(\log F), \pi^*H)]^{vir}}-\lambda_1 ev_1^*([pt_1]) ev_2^*([pt_2])
    \]
    where $[pt_1] \in A^2(\FF_1)$ is the Poincar\'e dual of a point in the interior and $[pt_2] \in A^1(F)$ is the Poincar\'e dual of a point on a $\bP^1$-fiber class $F$ of $\FF_1.$ On $\overline{\mathcal{M}}_{1,1}$, we have,

    \[
    \lambda_1 = \frac{1}{12}\delta_0
    \]
    where $\delta_0 \in A^1(\overline{\mathcal{M}}_{1,1})$ is the class of a point. We take for a representative of $\delta_0$ the nodal rational cubic, and resolve the node. The invariant becomes,
    
    \[
    \left(\frac{1}{2}\right)\left(\frac{-1}{12}\right)\int_{[\overline{\mathcal{M}}_{0,4}(\FF_1(\log \bP^1), \pi^*H)]^{vir}}ev_1^*([pt_1]) ev_2^*([pt_2]) (ev_3\times ev_4)^*(D \times D)
    \]
where the $\frac{1}{2}$ comes from the two ways of labelling the two marked points that resolve the node. The class $D\times D$ is the diagonal curve class in $A^2(\FF_1 \times \FF_1)$, which is 

\[
D \times D = (1 \times pt) + (pt \times 1) + (\pi^*H \times \pi^*H) + (C \times C) 
\]
The first two terms in $D\times D$ contribute zero by the Fundamental Class Axiom. The last term also contributes zero by the Divisor Axiom, since $\pi^*H \cdot C = 0.$ Hence, the invariant becomes

\[
    \frac{-1}{24}\int_{[\overline{\mathcal{M}}_{0,4}(\FF_1(\log \bP^1), \pi^*H)]^{vir}}ev_1^*([pt_1]) ev_2^*([pt_2])ev_3^*(\pi^*H)ev_4^*(\pi^*H)
\]
By the Divisor Axiom, this is,

\[
\frac{-1}{24}\int_{[\overline{\mathcal{M}}_{0,2}(\FF_1(\log \bP^1), \pi^*H)]^{vir}}ev_1^*([pt_1]) ev_2^*([pt_2])
\]
The latter genus-0 invariant is the number of lines through two points, and hence the above evaluates to $\frac{-1}{24}$, which is also the coefficient of $\hbar^2$ in $(-i)(q^{\frac{1}{2}} - q^{\frac{-1}{2}})$ (see Proposition \ref{prop:vertexC}).

\bibliographystyle{alpha}
\bibliography{refs}
\end{document}